\documentclass{article}
\usepackage{amsmath}
\usepackage{amsfonts}
\usepackage{amsthm}
\usepackage{mathrsfs}
\usepackage{amssymb}
\usepackage{tikz}
\usepackage{comment}
\usetikzlibrary{calc}
\usepackage{tikz-cd}
\usepackage{graphicx}
\usepackage[nodisplayskipstretch]{setspace}
\usepackage{varwidth}

\DeclareMathOperator{\Pic}{Pic}
\DeclareMathOperator{\Sym}{Sym}
\DeclareMathOperator{\Proj}{Proj}
\DeclareMathOperator{\Spec}{Spec}
\DeclareMathOperator{\Eff}{Eff}
\DeclareMathOperator{\vol}{vol}

\DeclareMathOperator{\sgn}{sgn}
\DeclareMathOperator{\conv}{conv}

\DeclareMathOperator{\DHeck}{DH}
\renewcommand{\P}{\mathbb P}
\renewcommand{\div}{\mathrm{div}}
\newcommand{\Oo}{\mathcal O}

\newcommand{\Q}{\mathbb Q}

\newcommand{\R}{\mathbb R}
\newcommand{\Z}{\mathbb Z}
\newcommand{\C}{\mathbb C}

\newtheorem{theorem}{Theorem}[section]
\newtheorem{lemma}[theorem]{Lemma}
\newtheorem{corollary}[theorem]{Corollary}
\newtheorem{proposition}[theorem]{Proposition}
\newtheorem{conjecture}[theorem]{Conjecture}

\theoremstyle{definition}
\newtheorem{definition}[theorem]{Definition}
\newtheorem{example}[theorem]{Example}

\theoremstyle{remark}
\newtheorem{remark}[theorem]{Remark}

\numberwithin{theorem}{section}

\title{Effective Divisors and Newton-Okounkov Bodies of Hilbert Schemes of Points on Toric Surfaces}
\author{Ian Cavey}

\begin{document}

\maketitle

\begin{abstract}
We compute the (unbounded) Newton-Okounkov body of the Hilbert scheme of points on $\C^2$. We obtain an upper bound for the Newton-Okounkov body of the Hilbert scheme of points on any smooth toric surface. We conjecture that this upper bound coincides with the exact Newton-Okounkov body for the Hilbert schemes of points on $\P^2, \P^1\times\P^1$, and Hirzebruch surfaces. These results imply upper bounds for the effective cones of these Hilbert schemes, which are also conjecturally sharp in the above cases. 
\end{abstract}

\section{Introduction}

For a smooth, algebraic surface $X$ over $\C$, the Hilbert scheme of $n$ points on $X$ parametrizes length $n$, zero-dimensional subschemes of $X$. A well-known theorem due to Fogarty states that this Hilbert scheme, denoted $X^{[n]}$, is a smooth, irreducible variety of dimension $2n$ \cite{Fo1}. In this paper we study effective divisors on the Hilbert schemes of points on toric surfaces using methods from the theory of Newton-Okounkov bodies.\\ 

Newton-Okounkov bodies are convex bodies associated to divisors on algebraic varieties, generalizing the connection between Newton polytopes and toric varieties. These convex bodies were introduced in passing by Okounkov \cite{O1}\cite{O2}, and their theory was further developed by Kaveh-Khovanskii \cite{KK} and Lazarsfeld-Musta\c{t}\u{a} \cite{LM}. We briefly recall the construction. Let $Y$ be a $d$-dimensional irreducible variety and $D$ a Cartier divisor on $Y$. The Newton-Okounkov body of $D$ depends on a choice of valuation $\nu:\C(Y)^\times\to \Z^d$ defined on the field of rational functions $\C(Y)$. After fixing $\nu$, the valuations obtained from sections of $\Oo(D)$ and its multiples can be assembled into a graded semigroup,
\[ \Gamma(D) = \bigoplus_{m\geq 0}\Gamma_m(D) =  \bigoplus_{m\geq 0} \{(m,\nu(f)) \hspace{1ex} | \hspace{1ex} f\in H^0(Y,\Oo(mD))^\times \}\subseteq \Z\times \Z^d, \]
and the Newton-Okounkov body $\Delta(D)$ is defined to be the closed convex hull,
\[ \Delta(D) = \overline{\conv\left( \bigcup_{m>0} \frac{1}{m}\Gamma_m(D) \right)}\subseteq \{1\}\times\R^d\simeq\R^d. \]

When $Y$ is projective and $D$ is a big divisor, the convex set $\Delta(D)\subseteq \R^d$ is bounded, and has the property that its Euclidean volume is equal to the volume of $D$ as a divisor (after normalizing by a factor of $d!$). It is sometimes more convenient to talk about the Newton-Okounkov body $\Delta(\mathscr{L})$ of a line bundle $\mathscr{L}$ on $Y$, which is defined by replacing $\Oo(mD)$ with $\mathscr{L}^{\otimes m}$ in the above construction. \\

We first study the Hilbert scheme of $n$ points on $\C^2$. In \cite{Ha2}, Haiman identifies $(\C^2)^{[n]}$ with an explicit blowup of the symmetric power $(\C^2)^{(n)}$. This identification equips $(\C^2)^{[n]}$ with an ample line bundle, which we denote by $\Oo(1)$. Our first result is a computation of the Newton-Okounkov body of this line bundle.

\begin{theorem}\label{C2intro} 
The Newton-Okounkov body of the line bundle $\Oo(1)$ on $(\C^2)^{[n]}$ is the closed convex hull of the set of $n$-tuples of distinct pairs $(a_1,b_1),\dots,(a_n,b_n)\in \Z_{\geq 0}^{2}$, labeled in increasing lexicographic order. This unbounded, convex polyhedron is defined by the inequalities
\[ \setstretch{1.5}
\Delta\left(\Oo(1)\right) = \left\{
\begin{varwidth}{10 in} \setstretch{1.5}
$(a_1,\dots,a_n,$\\
 $b_1,\dots,b_n)\in \R^{2n}$
 \end{varwidth} 
 \hspace{1ex}\left|\hspace{1ex} 
 \begin{varwidth}{10 in}
$0\leq a_1\leq a_2\leq\cdots\leq a_n$, and\\
$b_j\geq (j-i)(1-a_j)+a_i+\cdots +a_{j-1} $,\\
for all $1\leq i \leq j \leq n$
 \end{varwidth} 
 \right.
 \right\}.
 \setstretch{1}
\]
\end{theorem}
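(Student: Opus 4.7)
The plan is to combine Haiman's identification of the section ring of $\Oo(1)$ on $(\C^2)^{[n]}$ with a lexicographic monomial valuation on $\C(x_1, y_1, \ldots, x_n, y_n)$, reducing the computation of the Newton-Okounkov body to the convex hull of valuations of alternants.

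I first recall from \cite{Ha2} the identification $(\C^2)^{[n]} \cong \Proj_{(\C^2)^{(n)}} \bigoplus_m (R^\epsilon)^m$, where $R = \C[x_1, y_1, \ldots, x_n, y_n]$ and $R^\epsilon \subset R$ is the $S_n$-sign isotypic component. In particular, $H^0(\Oo(m))$ is spanned by $m$-fold products of alternants $A_\lambda = \det(x_i^{a_j} y_i^{b_j})_{i,j}$, indexed by $n$-subsets $\lambda = \{(a_1, b_1), \ldots, (a_n, b_n)\} \subset \Z^2_{\geq 0}$. Since the Hilbert-Chow morphism is birational, I define $\nu \colon \C(x_1, y_1, \ldots, x_n, y_n)^\times \to \Z^{2n}$ to be the lex-smallest monomial exponent valuation, ordering $\Z^{2n}$ lexicographically as $(c_1, \ldots, c_n, d_1, \ldots, d_n)$. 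A direct expansion of the determinant shows that the identity permutation provides the lex-smallest monomial in $A_\lambda$ when $\lambda = \{(a_1, b_1) < \cdots < (a_n, b_n)\}$ is lex-sorted, so $\nu(A_\lambda) = (a_1, \ldots, a_n, b_1, \ldots, b_n)$.

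The main step is to prove $\Delta(\Oo(1)) = \overline{\conv}\{\nu(A_\lambda) : \lambda\}$. The inclusion $\supseteq$ is immediate from $\nu(A_\lambda) \in \Gamma_1(\Oo(1))$. For $\subseteq$, the key observation is that on a pure product $\prod_k A_{\lambda^{(k)}}$ the valuation is additive, so $\nu$ of the product divided by $m$ is a convex combination of vertex valuations; for a general linear combination $f \in H^0(\Oo(m))$, the $\sgn^m$-equivariance of $f$ forces its monomial set to be $S_n$-symmetric, so its lex-smallest monomial has sorted coordinates $a_1 \leq \cdots \leq a_n$, and an inductive argument places this sorted exponent in $mP$. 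Finally, I verify combinatorially that $\overline{\conv}\{\nu(A_\lambda)\} = P$: the inequality $b_j \geq (j-i)(1-a_j) + a_i + \cdots + a_{j-1}$ at a single vertex $\nu(A_\lambda)$ reduces to a counting argument — among the points of $\lambda$ with index in $[i, j]$ and $a$-value equal to $a_j$, the $b$-values are distinct nonnegative integers in lex-increasing order, so $b_j \geq |S| - 1 = (j - i) - \sum_{k=i}^{j-1}(a_j - a_k)$. The reverse inclusion is established by realizing each lattice point in $P$ as some $\nu(A_\lambda)$ and handling the recession cone.

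The main obstacle I anticipate is the treatment of cancellations in the general-sum case: when $f$ is a linear combination of alternant products whose leading terms cancel, $\nu(f)$ must be extracted from a subsequent monomial, and one must show this monomial — which, by $S_n$-equivariance, still has sorted coordinates — nevertheless lies in $mP$. The key device will be to identify the lex-min monomial of $f$ as arising from an ``identity-type'' matching of alternant vertices for some sub-collection of the $\lambda^{(k)}$'s involved, which then places it in $mP$ by additivity of $\nu$ on products.
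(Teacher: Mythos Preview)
Your setup matches the paper's: Haiman's $\Proj$ description, the lex trailing-term valuation, and the computation $\nu(A_\lambda)=(a_1,\dots,a_n,b_1,\dots,b_n)$ for lex-sorted $\lambda$. The lower bound and the verification that each $\nu(A_\lambda)$ satisfies the stated inequalities are fine and coincide with the paper's Lemma~3.3.

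There is, however, a genuine gap in the upper bound. First, your assertion that $H^0(\Oo(m))$ is \emph{spanned} by $m$-fold products of alternants is not something you get from Haiman's theorem: $\Proj$ of a graded ring only identifies $H^0(\Oo(m))$ with the degree-$m$ piece of the \emph{integral closure} of $S=\bigoplus A^r$. The equality $A^r=H^0(\Oo(r))$ is exactly the paper's Corollary~3.10, proved \emph{after} the valuation analysis, not as an input to it. Second, and more seriously, your proposed handling of cancellations does not work as stated. When the lex-minimal monomials of several alternant products cancel, the surviving lex-minimal monomial of $f$ can arise from non-identity permutations in the individual determinants, and there is no reason its sorted exponent vector decomposes as a sum of points of $\Gamma_1$; the ``identity-type matching for some sub-collection'' device you sketch has no clear mechanism to produce such a decomposition.

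The paper circumvents this entirely by working not with linear combinations of alternant products but with the ideal-theoretic upper bound $H^0(\Oo(r))\subseteq \overline{A}^r = A^{\epsilon^r}\cap J^r$, where (by Haiman) $J^r=\bigcap_{i<j}(x_i-x_j,y_i-y_j)^r$. The key lemma (Lemma~3.4/Proposition~3.5) is that for any $f\in\overline{A}^r$, extracting the coefficient $[x_1^{p_1}\cdots x_n^{p_n}]f$ yields a polynomial divisible by $\prod_{i<k,\ p_k-p_i<r}(y_i-y_k)^{r-p_k+p_i}$; comparing trailing terms then forces $\nu(f)\in\Gamma_r$ directly, with no reference to how $f$ might be expressed in terms of alternants. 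A separate combinatorial argument (Proposition~3.7) then shows $\Gamma_r$ equals the $r$-fold Minkowski sum of $\Gamma_1$, which is what pins the Newton--Okounkov body down to $\overline{\conv}(\Gamma_1)$. Your plan is missing both of these ingredients.
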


This theorem is proved in Section \ref{c2section}. Haiman's construction identifies the global sections of $\Oo(m)$ with certain polynomials, and the Newton-Okounkov body above is computed using a trailing term valuation on these polynomials (Definition \ref{val}). As $(\C^2)^{[n]}$ is not projective, the spaces of global sections of $\Oo(m)$ are infinite-dimensional, and accordingly the polyhedron $\Delta(\Oo(1))$ is unbounded. However, $\Delta(\Oo(1))$ still encodes asymptotic information about the sections of $\Oo(m)$ equivariantly, via the Duistermaat-Heckman measure (see Section \ref{DH}). These polyhedra have interesting combinatorial structure: for instance their top-dimensional bounded faces are enumerated by Catalan numbers. \\

Next we study the Hilbert schemes of points on smooth, projective, toric surfaces. We recall Fogarty's description of the Picard group $\Pic(X^{[n]})$ from \cite{Fo2}. Let $X$ be a smooth projective surface with irregularity $q(X)=0$ (which is the case whenever $X$ is toric). There is a linear embedding $\Pic(X)\subseteq \Pic(X^{[n]})$, which we denote $D\mapsto D_n$. Geometrically, if $D$ is the class of a smooth, irreducible curve $C\subseteq X$, then $D_n$ is represented by the locus of length $n$ subschemes of $X$ whose supports meet the curve $C$. The exceptional locus of the Hilbert-Chow morphism is an irreducible divisor on $X^{[n]}$ consisting of the nonreduced subschemes of $X$, whose class we denote by $B$. For notational convenience we often use the divisor class $E=-\frac12B$, which corresponds more directly to the line bundle $\Oo(1)$ on $(\C^2)^{[n]}$, instead of the geometrically defined divisor $B$. With this identification, there is an isomorphism $\Pic(X^{[n]})\simeq \Pic(X)\times \Z E$.\\

When $X$ is a smooth, projective, toric surface, $\Pic(X)$ is generated by torus invariant divisors. In Section \ref{toric} we recall the definition of the Newton polytope $P_D$ of such a divisor $D$ and identify $P_D$ with a subset of $\R^2$, writing
\[ P_D = \left\{ (a,b)\in \R^2 \hspace{1ex}\left|\hspace{1ex} 
 \begin{varwidth}{10 in} \setstretch{1.5}
$0\leq a \leq c$, and\\
$\ell(a) \leq b \leq u(a)$
 \end{varwidth} 
 \right.
 \right\}\]
 for some constant $c$, and piecewise linear functions $\ell$ and $u$.

\begin{theorem}\label{ubintro}
Let $D$ be a torus invariant divisor on a smooth, projective, toric surface $X$, with $P_D$ as above. For any $r\in \Z$, the Newton-Okounkov body $\Delta(D_n+rE)$ is contained in the convex set
\[ \setstretch{1.5}
\left\{ \begin{varwidth}{10 in} \setstretch{1.5}
$(a_1,\dots,a_n,$\\
\hspace{2ex} $b_1,\dots,b_n)\in \R^{2n}$
 \end{varwidth}  \hspace{1ex}\left|\hspace{1ex} 
 \begin{varwidth}{10 in} \setstretch{1.5}
$0\leq a_1\leq a_2\leq\cdots\leq a_n \leq c$, and\\
$b_j\geq \ell(a_j) +(j-i)(r-a_j) + a_i+\cdots +a_{j-1}$,\\
$b_j\leq u(a_j) - (k-j)(r+a_j) + a_{j+1}+\cdots+a_k$,\\
for all $1\leq i \leq j \leq k\leq n$
 \end{varwidth} 
 \right.
 \right\}.
 \setstretch{1}
\]
\end{theorem}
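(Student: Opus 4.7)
My plan is to reduce Theorem~\ref{ubintro} to Theorem~\ref{C2intro} by restricting sections of $D_n+rE$ to the natural torus-invariant open cover of $X^{[n]}$ by affine pieces of the form $(\C^2)^{[n]}$, and then extracting the additional Newton-polytope-dependent inequalities from the gluing conditions between charts. Concretely, since $X$ is smooth toric, its fan has maximal cones $\{\sigma_i\}$ corresponding to torus-fixed points $p_i\in X$ and affine charts $U_{\sigma_i}\simeq \C^2$; taking Hilbert schemes gives an open cover $X^{[n]}=\bigcup_i U_{\sigma_i}^{[n]}$ with each piece identified with $(\C^2)^{[n]}$. Under this identification the class $E$ restricts to the class corresponding to $\Oo(1)$ on each $(\C^2)^{[n]}$, while $D_n$ restricts to a trivializable line bundle, with trivialization given by the local equation $\chi^{u_i}$ of $D$ at $p_i$, where $u_i\in P_D$ is the vertex dual to $\sigma_i$.

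I would then pick the vertex $u_0=(0,\ell(0))$ of $P_D$ and take the trailing term valuation $\nu$ from Definition~\ref{val} on $\C(X^{[n]})=\C(U_{\sigma_0}^{[n]})$. For a section $s\in H^0(X^{[n]},D_n+rE)$, restricting to $U_{\sigma_0}^{[n]}$ and trivializing by $\chi^{u_0}$ produces a polynomial section of $\Oo(r)$ on $(\C^2)^{[n]}$. Applying the natural $r$-scaled form of Theorem~\ref{C2intro} and shifting vertically by $\ell(0)$ then yields $0\leq a_1\leq\cdots\leq a_n$ together with $b_j\geq \ell(0)+(j-i)(r-a_j)+a_i+\cdots+a_{j-1}$, a weaker form of the claimed inequalities (with $\ell(0)$ in place of $\ell(a_j)$, and with no upper bounds at all).

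The sharper inequalities in the statement --- the upper bound $a_n\leq c$, the refined lower bound involving $\ell(a_j)$, and the upper bound involving $u(a_j)$ --- come from demanding that $s$ extend regularly across the other affine charts. The transition between $U_{\sigma_0}$ and an adjacent chart $U_{\sigma_i}$ is a monomial change of coordinates determined by the edge of $P_D$ from $u_0$ to $u_i$; requiring that $s|_{U_{\sigma_0}^{[n]}}$ extend regularly across this transition forces vanishing conditions on certain trailing terms of the corresponding Haiman polynomial. Running this analysis along each boundary edge of $P_D$ should produce exactly the piecewise linear functions $\ell$ and $u$ and the width $c$ appearing in the theorem, with the vertices of $P_D$ along the lower and upper boundaries giving rise to the breakpoints of $\ell$ and $u$.

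I expect the main technical challenge to lie in this last step: tracking how the trailing term valuation interacts with monomial coordinate changes between Hilbert scheme charts, and verifying that the contribution of each edge of $P_D$ matches the corresponding linear piece of $\ell$ or $u$. The combinatorial input is the set of primitive dual vectors along the edges of $P_D$, which control the exponents appearing in the transition monomials and hence the shifts defining $\ell$, $u$, and $c$; everything else should then reduce to a book-keeping argument parallel in spirit to the proof of Theorem~\ref{C2intro}.
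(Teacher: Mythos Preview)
Your plan is in the right spirit but understates where the real work lies, and the specific mechanism you propose for the sharp inequalities does not go through as stated.

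The paper works entirely in the single chart $U_\sigma\simeq\C^2$. Extendability of a section to all of $X^{[n]}$ is packaged once and for all as the condition that the Haiman polynomial $f\in A^r$ have Newton polytope contained in $(P_D)^n$ (Proposition~\ref{projsections}). No other chart or transition function is ever used. The inequalities then follow from Proposition~\ref{np}, which is strictly stronger than Theorem~\ref{C2intro}: it asserts that the Newton polytope of $f$ must contain, for each $j$, points whose $(a_j,b_j)$-coordinates are
\[
\Bigl(p_j,\; q_j - \sum_{\substack{i<j\\ p_j-p_i<r}}(r-p_j+p_i)\Bigr)
\quad\text{and}\quad
\Bigl(p_j,\; q_j + \sum_{\substack{k>j\\ p_k-p_j<r}}(r-p_k+p_j)\Bigr).
\]
Theorem~\ref{C2intro} only uses the first point (forcing it into $\R^2_{\geq 0}$); the projective theorem uses both, forcing them into $P_D$, which is what produces $\ell(a_j)$, $u(a_j)$, and $c$ simultaneously. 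The source of both points is the divisibility Lemma~\ref{divis}: the coefficient $g=[x_1^{p_1}\cdots x_n^{p_n}]f$ is divisible by $\prod_{i<k}(y_i-y_k)^{r-p_k+p_i}$, and this factorization pins down both the bottom and the top of the Newton polytope of $g$ in the $y$-variables.

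Your alternative --- restrict to other charts and reapply Theorem~\ref{C2intro} --- does not give what you need. In an adjacent chart the trailing-term valuation is a \emph{different} valuation $\nu'$; applying Theorem~\ref{C2intro} there constrains $\nu'(f)$, not $\nu(f)$. Concretely, after the $x$-part is fixed, passing to a chart that inverts $y$ replaces the lex-trailing exponent vector $(q_1,\dots,q_n)$ of $g$ by its lex-leading exponent vector $(q_1^{\max},\dots,q_n^{\max})$, and for $j>1$ there is no a~priori inequality between $q_j$ and $q_j^{\max}$. So the upper bounds on $b_j$ that you want cannot be read off from the $\C^2$ inequalities in the flipped chart. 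What actually links the trailing $(q_j)$'s to the \emph{upper} boundary of $P_D$ is precisely the factorization of $g$ coming from Lemma~\ref{divis}; this is the missing idea in your outline, and without it the ``book-keeping'' step does not reduce to anything routine.
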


This is proved in Section \ref{projsection}. For $P_D = \R^2_{\geq 0}$, the convex set defined above recovers the Newton-Okounkov body of $\Oo(r)$ on $(\C^2)^{[n]}$. We refer to the convex body appearing in Theorem \ref{ubintro} as $\overline{\Delta}(D_n+rE)$, so the theorem asserts the inclusion
\[\Delta\left(D_n+rE\right) \subseteq \overline{\Delta}\left(D_n+rE\right). \]
For most toric surfaces $X$, this is a strict containment, but based on explicit computations for small $n$ we propose the following.

\begin{conjecture}\label{sharpintro}
If the surface $X$ is $\P^2, \P^1\times \P^1$, or a Hirzebruch surface, then the containment in Theorem \ref{ubintro} is sharp for all torus invariant divisors $D\in\Pic(X)$ and all $r\in \Z$. In other words, we conjecture the equality 
\[\Delta\left(D_n+rE\right) = \overline{\Delta}\left(D_n+rE\right) \]
for all divisors $D_n+rE\in \Pic(X^{[n]})$ in these cases.
\end{conjecture}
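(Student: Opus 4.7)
The plan is to upgrade the inclusion $\Delta(D_n+rE) \subseteq \overline{\Delta}(D_n+rE)$ of Theorem~\ref{ubintro} to an equality by exhibiting enough sections of $\Oo(m(D_n+rE))$ whose trailing-term valuations fill out $m\cdot \overline{\Delta}$. Since the Newton-Okounkov body is a closed convex hull, it suffices to produce, for infinitely many $m$, sections whose valuations attain a dense set of lattice points in $m\cdot\overline{\Delta}$.

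First, I would identify $H^0(X^{[n]}, \Oo(D_n+rE))$ with a concrete space of polynomials using Fogarty's description of $\Pic(X^{[n]})$ together with Haiman's polygraph and $n!$-theorem results. Working in the open chart $(\C^2)^{[n]} \hookrightarrow X^{[n]}$ coming from a torus-fixed affine chart of $X$, sections of $\Oo(mE)$ are controlled by the setup behind Theorem~\ref{C2intro}, while sections of $\Oo(mD_n)$ are built from symmetric products of torus-invariant sections of $\Oo(mD)$ on $X$, which have a monomial basis indexed by $mP_D \cap \Z^2$. The key task is then to patch these local pictures into honest global sections on $X^{[n]}$.

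Second, I would construct, for each lattice point in the interior of $m\cdot\overline{\Delta}$, an explicit section whose valuation hits that point. The combinatorics should mirror the proof of Theorem~\ref{C2intro}: the inequalities $b_j \geq \ell(a_j) + (j-i)(r-a_j) + \cdots$ and $b_j \leq u(a_j) - (k-j)(r+a_j) - \cdots$ reflect the local behavior of sections at a chosen torus-fixed point $[Z]\in X^{[n]}$, and the matching sections should be symmetrizations of monomials $\prod_j x_j^{a_j} y_j^{b_j}$ clipped by the boundary polynomials of $P_D$. Sharpness then reduces to verifying that no further relations collapse the valuations into a strictly smaller polytope.

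The main obstacle I anticipate is the global extension problem: on a generic smooth toric surface, simultaneous vanishing conditions along several boundary divisors can obstruct extending the locally-constructed sections, and it is precisely the simple shape of $P_D$ for $\P^2$, $\P^1\times\P^1$, and Hirzebruch surfaces---a triangle, rectangle, or trapezoid, each with only one nontrivial upper and one nontrivial lower boundary---that should eliminate these obstructions and allow a clean induction on $n$. Making this rigorous will likely demand a case analysis keyed to the fan of $X$, and, for negative $r$ where Haiman's symmetric-function identification breaks down, a careful description of sections via the tautological filtration together with the exceptional divisor $B$.
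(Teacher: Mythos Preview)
The statement you are attempting to prove is a \emph{conjecture} in the paper, not a theorem; the paper does not contain a proof of it. What the paper does offer is a verification of the conjecture in a handful of small cases (notably $(\P^2)^{[4]}$), and that verification proceeds by an entirely different route than the one you outline. Rather than constructing sections, the paper fixes a single ample class $D_n+rE$, computes the top self-intersection number $\int_{(\P^2)^{[n]}}(D_n+rE)^{2n}$ by equivariant localization, computes the Euclidean volume of the polytope $\overline{\Delta}(D_n+rE)$ directly, and observes that these agree (up to the factor $(2n)!$). Since $\Delta\subseteq\overline{\Delta}$ and both are convex bodies of the same volume, they coincide. The equality is then pushed to the rest of the effective cone using the slicing theorem of Lazarsfeld--Musta\c{t}\u{a} and super-additivity of Newton--Okounkov bodies. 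This is a finite computation for each fixed $n$, not a uniform argument, and the paper explicitly says it becomes impractical for larger $n$.

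Your proposal, by contrast, tries to attack the conjecture directly by exhibiting sections whose valuations fill $m\cdot\overline{\Delta}$. This is precisely the step the paper cannot carry out and identifies as the obstruction: in the projective case the graded semigroup of valuations is \emph{not} generated in degree one (in contrast to the $\C^2$ case underlying Theorem~\ref{C2intro}), so products of the basic determinants $d_{(\mathbf{p,q})}$ and monomial symmetric functions do not reach all of $\Gamma(mD_n+mrE)$. Your sketch acknowledges a ``global extension problem'' but offers no mechanism for producing the missing sections; the observation that $P_D$ is a triangle, rectangle, or trapezoid is suggestive but does not by itself yield the required polynomials in $A(mD_n+mrE)$. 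Absent a concrete construction of those sections---or a replacement argument such as the volume-matching strategy the paper actually uses---the proposal does not close the gap between $\Delta$ and $\overline{\Delta}$.
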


In Section \ref{examples} we verify this conjecture in the case of $n=4$ points on $\P^2$ to illustrate the methods leading to Conjecture \ref{sharpintro}, as well as the geometric information that these convex sets encode about the Hilbert schemes. \\

Of particular interest are the cones of effective divisors on $X^{[n]}$, which have been studied extensively \cite{ABCH},\cite{BC},\cite{Hu},\cite{Ry}. Huizenga \cite{Hu} has computed the effective cones of $(\P^2)^{[n]}$ for all $n$, but for other surfaces the effective cones are known only for small $n$. Theorem \ref{ubintro} implies an upper bound for the cone of effective divisors on $X^{[n]}$. Indeed, effective divisors $\xi\in\Pic(X^{[n]})$ have nonempty Newton-Okounkov bodies $\Delta(\xi)$, so if the upper bound $\overline{\Delta}(\xi)\supseteq \Delta(\xi)$ is empty, then $\xi$ is not effective. To compute the implied upper bound on the effective cones, it is convenient to use the global Newton-Okounkov body and its convexity properties, as explained in Section \ref{effsection}.\\

Conjecture \ref{sharpintro} would imply that this method computes the exact effective cones for the Hilbert schemes of points on $\P^2$, $\P^1\times\P^1$, and Hirzebruch surfaces. We have verified that this upper bound agrees with the effective cones of $(\P^2)^{[n]}$ computed by Huizenga for all $n\leq 171$ \textit{numerically} (see Section \ref{effsection}). Ryan \cite{Ry} has computed the effective cones for the Hilbert schemes of $n\leq 16$ points $\P^1\times\P^1$, and the upper bound is sharp in these cases as well. We have also computed similar bounds for Hirzebruch surfaces.\\

Upper bounds on the effective cone are often obtained by intersecting with moving curve classes. This differs from the approach described above, which comes instead from a valuation on the effective divisors. The valuation records order-of-vanishing information about individual effective divisors, whereas intersection products with curves depend only on the linear (indeed numerical) equivalence class of the divisor. Characterizing the set of valuations of all effective divisors in a given linear equivalence class is a large refinement of the problem of characterizing which classes contain an effective divisor. Given Conjecture \ref{sharpintro}, however, this finer invariant appears to yield simpler results, at least asymptotically. Indeed, the conjectural global Newton-Okounkov body is described by a list of explicit inequalities, uniform in the number of points $n$. Explicit descriptions of the effective cones however, which are projections of the global Newton-Okounkov bodies, appear to depend on the arithmetic properties of $n$ \cite{BC}. A table containing some data on effective cones computed using this method can be found at the end of Section \ref{effsection}.\\

\textit{Acknowledgements:} I thank the authors of \cite{ABCH} and \cite{Ha1}, whose exposition enabled me to learn about Hilbert schemes of surfaces. I am grateful to Izzet Coskun for helpful discussion during the early stages of this project. Most of all I thank David Anderson for teaching me about Newton-Okounkov bodies, providing detailed comments on the many iterations of this document, and suggesting the problem of computing Newton-Okounkov bodies of Hilbert schemes in the first place.\\

\textit{Conventions:} We use the term \textit{variety} to mean \textit{algebraic variety over} $\C$, and \textit{divisor} always means \textit{Cartier divisor}. We equip $\Z^d$ with the lexicographic order.\\

\section{Background}\label{bg}

\subsection{Newton-Okounkov Bodies}

We refer to \cite{KK} and \cite{LM} for proofs of the results stated in this section. Let $Y$ be a $d$-dimensional, irreducible variety over $\C$. A \textit{valuation} on $Y$ is a group homomorphism $\nu:\C(Y)^\times \to \Z^d$ such that
\begin{itemize}
\item $\nu(f+g)\geq \min\{\nu(f),\nu(g)\}$ for all $f,g\in \C(Y)^\times$ (with the lexicographic order on $\Z^d$, as always), and
\item $\nu(\lambda) = 0$ for all nonzero constant functions $\lambda\in \C^\times \subseteq\C(Y)^\times$.
\end{itemize}

One says that $\nu$ has \textit{one-dimensional leaves} if for any $a\in \Z^d$ the \textit{leaf at a}, 
\[ F_a= \{ f\in \C(Y)^\times | \nu(f)\geq a \} /  \{ f\in \C(Y)^\times | \nu(f)> a \}, \]
is a vector space of dimension at most one. This property implies that for any finite-dimensional linear subspace $V\subseteq \C(Y)$, the set $\{ \nu(f) | f\in V\setminus\{0\} \}\subseteq \Z^d$ has exactly $\dim(V)$ elements.\\

Fix a divisor $D$ on $Y$, and a valuation $\nu$ with one dimensional leaves. We consider $H^0(X,\Oo(D))\subseteq\C(Y)$ identified with the set of rational functions $f$ such that $D+\div(f)\geq 0$. The \textit{graded semigroup} of $D$ with respect to $\nu$ is defined as
\[ \Gamma_\nu(D) := \{ (\nu(f),k) \hspace{1ex}|\hspace{1ex} 0\neq f\in H^0(X,\Oo(kD)),  k\geq 0 \} \subseteq \Z^d\times \Z. \]
The grading here refers to the recording of which multiple $\Oo(kD)$ that each valuation comes from. One checks that $\Gamma_\nu(D)$ indeed forms a semigroup under the usual coordinate-wise addition of vectors.\\

We consider $\Gamma_\nu(D)\subseteq \Z^d\times \Z\subseteq \R^{d+1}$ in the obvious way. The \textit{cone} of $\Gamma_\nu(D)$, denoted $\Sigma_\nu(D)$, is the smallest closed, convex, cone containing the entire semigroup $\Gamma_\nu(D)\subseteq \R^{d+1}$. The \textit{Newton-Okounkov body} $\Delta_\nu(D)$ of $D$ is the intersection of the cone $\Sigma_\nu(D)$ with the affine subspace $\R^d\times\{1\}\subseteq \R^{d+1}$. We consider $\Delta_\nu(D)\subseteq \R^d$, and write $\Delta(D),\Gamma(D),$ and $\Sigma(D)$ for these objects when the choice of valuation is clear, or is unimportant.\\

In our computations, we identify the global sections of divisors $\Oo(D)$ with certain sets of polynomials (see Section \ref{algsection}). Importantly, these identifications are compatible with multiplication of global sections. In other words, if sections $s\in H^0(Y,\Oo(D))$ and $t\in H^0(Y,\Oo(E))$ are identified with polynomials $f$ and $g$ respectively, then the section $s\otimes t \in H^0(Y,\Oo(D)\otimes \Oo(E)) \simeq H^0(Y,\Oo(D+E)) $ is identified with $f\cdot g$. We use a leading/trailing term valuation on the associated polynomials, and this multiplication property ensures that the resulting sets of valuations still form a semigroup. \\

The volume of a divisor $D$ on a projective variety $Y$ is the asymptotic growth rate of sections of its multiples,
\[ \vol_Y(D):=\lim_{k\to\infty} \frac{h^0(Y,\Oo(kD))}{k^d/d!}. \]
A divisor $D$ is said to be \textit{big} if $\vol(D)>0$. In this case $\Delta(D)$ is bounded, and is therefore a convex body. The following fundamental result relates the Euclidean volume of the Newton-Okounkov body $\Delta(D)\subseteq \R^d$ to the volume of the divisor $D$ under these hypotheses.

\begin{theorem}\label{vol}[\cite{KK}\cite{LM}\cite{O2}]
Let $D$ be a big divisor on a projective variety $Y$. Then for any valuation $\nu$ on $Y$ with one-dimensional leaves, we have
\[ \vol_{\R^d}(\Delta_{\nu}(D)) = \frac{1}{d!}\vol_Y(D). \]
\end{theorem}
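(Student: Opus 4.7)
The plan is to convert the statement into a counting problem for the semigroup $\Gamma(D)$ and apply a general result from convex geometry about lattice points in slices of semigroup cones.

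First I would exploit the one-dimensional leaves hypothesis. Standard filtration bookkeeping shows that if $V\subseteq \C(Y)$ is any finite-dimensional subspace, then the valuation map induces a bijection between a basis of $V$ (built successively by choosing one nonzero representative in each nontrivial leaf) and the set $\nu(V\setminus\{0\})\subseteq \Z^d$. Applying this to $V = H^0(Y,\Oo(kD))$, which is finite-dimensional by projectivity, gives the key identity
\[ h^0(Y,\Oo(kD)) = \#\Gamma_k(D), \qquad \text{where } \Gamma_k(D)=\{\nu(f)\,:\,0\neq f\in H^0(Y,\Oo(kD))\}. \]
So the Hilbert-type growth of $\Oo(kD)$ is exactly the number of lattice points in the $k$-th slice of the graded semigroup.

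The second step is to invoke the general semigroup counting theorem (due to Khovanskii) which says: if $S\subseteq \Z^d\times\Z_{\geq 0}$ is a finitely generated (or more generally, strongly nonnegative) semigroup whose associated subgroup has full rank $d+1$, and if $\Sigma$ is the closed convex cone it generates with slice $\Delta = \Sigma\cap(\R^d\times\{1\})$, then
\[ \lim_{k\to\infty}\frac{\#(S\cap (\Z^d\times\{k\}))}{k^d} = \vol_{\R^d}(\Delta). \]
Combining this with the previous identity gives
\[ \vol_Y(D) = \lim_{k\to\infty}\frac{h^0(Y,\Oo(kD))}{k^d/d!} = d!\cdot \lim_{k\to\infty}\frac{\#\Gamma_k(D)}{k^d} = d!\cdot \vol_{\R^d}(\Delta_\nu(D)), \]
which is the desired equality.

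The main obstacle is verifying that $\Gamma(D)$ satisfies the hypotheses of Khovanskii's theorem. The semigroup is not in general finitely generated, so one has to use a more robust version (Kaveh-Khovanskii's theorem on ``semigroups of integral points''), whose hypothesis is that the group generated by $\Gamma(D)$ has full rank $d+1$ in $\Z^{d+1}$. This is where bigness enters: bigness gives $h^0(Y,\Oo(kD))\gtrsim k^d$ for $k\gg 0$, so $\Gamma_k(D)$ contains at least $\sim k^d$ distinct lattice points, and a pigeonhole/affine independence argument on difference vectors forces the associated group to be full rank and the cone $\Sigma_\nu(D)$ to meet $\R^d\times\{1\}$ in a set of positive Lebesgue measure. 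The technical heart of the proof of Theorem \ref{vol}, as carried out in \cite{KK} and \cite{LM}, is really the combinatorial asymptotic for such semigroups; once that is in hand, the reduction above makes the link with $\vol_Y(D)$ almost formal.
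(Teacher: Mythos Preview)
The paper does not give its own proof of this theorem: it is stated in the background section with the explicit disclaimer ``We refer to \cite{KK} and \cite{LM} for proofs of the results stated in this section,'' and is used later as a black box. So there is no in-paper argument to compare against.

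That said, your outline is essentially the standard proof appearing in those references. The reduction $h^0(Y,\Oo(kD)) = \#\Gamma_k(D)$ via one-dimensional leaves is exactly right, and the appeal to the Kaveh--Khovanskii asymptotic for lattice points in slices of a semigroup cone is the correct engine. One technical point worth tightening: the hypothesis needed in \cite{KK} is not merely that the group generated by $\Gamma(D)$ has full rank, but also that the cone $\Sigma_\nu(D)$ is \emph{strongly convex} (equivalently, that $\Gamma(D)$ lies in a half-space with apex at the origin and meets the boundary hyperplane only at $0$). This is automatic for valuations arising from flags as in \cite{LM}, but for an arbitrary $\Z^d$-valued valuation with one-dimensional leaves one should note that the image lands in $\Z^d_{\geq 0}$ after a suitable shift, or argue directly that the semigroup is contained in a pointed cone. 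Your pigeonhole sketch for full rank from bigness is the right idea, though in practice both \cite{KK} and \cite{LM} handle this by exhibiting enough valuation vectors explicitly rather than by a pure counting argument.
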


Under the same hypotheses, the convex body $\Delta(D)$ depends only on the numerical equivalence class of $D$. The Newton-Okounkov body of a big numerical class $\xi\in N^1(Y)$ is therefore defined to be $\Delta(D)$, where $D$ is any divisor in the class $\xi$. We also refer to the the Newton-Okounkov body $\Delta(\mathscr L)$ of a big line bundle $\mathscr L$, which is defined to be $\Delta(D)$ for any divisor $D$ such that $\mathscr L\simeq \Oo(D)$. Newton-Okounkov bodies are homogeneous, in the sense that $\Delta(kD) = k\Delta(D)$ for any integer multiple $k>0$. Using this homogeneity, Newton-Okounkov bodies can be defined for any big, rational class $\xi\in N^1(Y)_\Q$.\\

The global Newton-Okounkov body $\Delta(Y)\subseteq N^1(Y)_\R\times \R^d$ captures the notion that the convex bodies $\Delta(\xi)$ vary continuously in the argument $\xi\in N^1(Y)_\Q$. This set $\Delta(Y)$ is a closed, convex cone with the property that, under the projection to the first coordinate, the fiber of $\Delta(Y)\to N^1(Y)$ over any big rational class $\xi \in N^1(Y)_\Q$ is precisely $\Delta(\xi)\subseteq \R^d$. The global Newton-Okounkov body can be used to define $\Delta(\xi)$ for any real numerical class $\xi\in N^1(X)_\R$, by setting
\[ \Delta(\xi) = \pi^{-1}(\xi)\subseteq \R^d, \] 
where $\pi:\Delta(Y)\to N^1(Y)_\R$ is projection onto the first coordinate.  For $\xi$ not in the closure of the big cone (equivalently $\xi$ not pseudo-effective), $\Delta(\xi)$ is empty. For $\xi\in N^1(Y)$ on the boundary of the pseudo-effective cone, this convex set $\Delta(\xi)$ may differ from the convex sets constructed from the valuations of sections of representatives $D\in\xi$. In fact, different representatives $D,D'\in \xi$ may produce different convex bodies from the valuation construction when $\xi$ is on this boundary. To avoid this confusion, the fibers of the global Newton-Okounkov are called \textit{numerical Newton-Okounkov bodies} in \cite{B}, however we simply refer to them as \textit{Newton-Okounkov bodies}. 

\subsection{Toric Surfaces}\label{toric}

For the relevant background on toric surfaces we follow Section 6.1 of \cite{LM}, but restrict to the two-dimensional case. We use the notation and definitions established in \cite{Fu}.\\

A toric surfaces is constructed from a fan $\Sigma$ in $N_\R\simeq\R^2$ where $N\simeq \Z^2$ is a two-dimensional lattice. Each cone $\sigma$ in the fan corresponds to an affine variety $U_\sigma=\Spec(\C[\sigma^\vee\cap M])$, where $\sigma^\vee$ is the dual cone to $\sigma$, and $M$ is the dual lattice to $N$. These affine varieties are then glued together to form the toric surface $X=X_\Sigma$. In particular, the cone $\sigma = \{0\}$ gives an open set $T= \Spec(\C[M])\simeq (\C^*)^2$ inside of $X$, the two-dimensional algebraic torus. The action of $T$ on itself by coordinate-wise multiplication extends to an action of $T$ on the whole surface $X$. Lattice points $m\in M$ index rational functions $\chi^m$ on $X$.  \\

We assume that $X$ is smooth and projective, and both of these properties can be detected from the fan $\Sigma$. The surface $X$ is smooth if and only each two-dimensional cone $\sigma\in \Sigma$ is spanned by integral vectors $v,v'\in N$ such that $v$ and $v'$ generate the lattice $N$. Since $X$ is two-dimensional it is projective if and only if it is complete, and $X$ is complete if and only if the cones in $\Sigma$ cover the whole vector space $N_\R$.\\

Fix an ordering $v_1,\dots,v_s$ for the generators of the rays in $\Sigma$ so that the two-dimensional cones are spanned by consecutive rays $[v_1,v_2],\dots,[v_{s-1},v_s]$, and $[v_s,v_1]$. By the orbit-cone correspondence, the torus fixed points of $X$ are indexed by two dimensional cones $\sigma\in \Sigma$, and the $T$-invariant curves of $X$ are indexed by the rays of $\Sigma$. Let $D_i$ denote the $T$-invariant curve corresponding to the ray spanned by $v_i$. Let $D=\sum_{i=1}^s d_iD_i$ be a $T$-invariant divisor on $X$. The \textit{Newton polygon} of $D$ is 
\[ P_D = \{ m\in M_\R \hspace{1ex}|\hspace{1ex} d_i+\langle m,v_i\rangle \geq 0 \text{ for all } i=1,\dots,s \}\]
A key fact is that the lattice points in $P_D$ index a basis of $T$-equivariant functions for the global sections of the line bundle $\Oo(D)$,
\[ H^0(X,\Oo(D)) = \bigoplus_{m\in P_D\cap M}\C\cdot \chi^m. \]
Indeed, $\div(\chi^m) = \sum_i \langle m,v_i\rangle D_i$, so the defining condition of $P_D$ says that the divisor $D+\div(\chi^m)$ has nonnegative order of vanishing along each of the $T$-equivariant divisors $D_1,\dots,D_s$. It follows from the definition that changing $D$ within its linear equivalence class translates the Newton polytope accordingly, $P_{D+\div(\chi^m)}=P_D-m$.\\

To relate these Newton polytopes to Newton-Okounkov bodies, we choose coordinates on $X$. Let $\sigma$ be the two dimensional cone in $\Sigma$ with boundary rays spanned by $v_1$ and $v_2$. Define $m_1$ and $m_2$ to be primitive generators of the dual cone $\sigma^\vee$, with $\langle m_i,v_j\rangle =\delta_{ij}$ for $i,j=1,2$. Since we assumed $X$ to be smooth, $m_1$ and $m_2$ form a $\Z$-basis of $M$. We denote the coordinates on $U_\sigma\simeq \C^2$ by $x$ and $y$, so that the torus character $\chi^{pm_1+qm_2}|_{U_\sigma}$ corresponds to $x^py^q$. This implies that $D_1|_{U_\sigma}$ is defined by $x=0$, $D_2|_{U_\sigma}$ is defined by $y=0$, and $D_3,\dots,D_s$ are all disjoint from $U_\sigma$.\\

There is a short exact sequence,
\[ \begin{tikzcd}
0 \arrow[r] & M \arrow[r] & \Z^s \arrow[r] & \Pic(X) \arrow[r] & 0,
\end{tikzcd} \] 
and $\Pic(X)$ is torsion-free. Here $\Z^s$ is the set of $T$-invariant divisors generated by $D_1,\dots,D_s$, the first map sends a lattice point $m$ to the principle divisor $\div(\chi^m) = \sum_{i=1}^s\langle m,v_i \rangle D_i$, and the second map sends a divisor to its class. These results imply that $\Pic(X)$ is isomorphic to $\Z^{s-2}$, freely generated by the divisor classes $D_3,\dots,D_s$. \\

For the rest of the paper we identify $M\simeq \Z^2$ with generators $m_1$ and $m_2$ corresponding to our choice of open set $U_\sigma$. We use coordinates $(a,b)$ to denote $am_1+bm_2$ in either $M\simeq\Z^2$ or $M_\R\simeq \R^2$. We also identify $P_D$ with its image in $\R^2$, writing
\[ H^0(X,\Oo(D)) \simeq \bigoplus_{(p,q)\in P_D\cap \Z^2} \C\cdot x^py^q. \]
Each divisor class in $\Pic(X)$ has a unique representative of the form $D = \sum_{i=3}^sd_iD_i$. For such divisors, the inequalities on $P_D$ corresponding to $d_1=d_2=0$ impose the conditions $a,b\geq 0$ on points $(a,b)\in P_D\subseteq \R^2$. This polygon can therefore be defined as
\[ P_D = \left\{ (a,b)\in \R^2 \hspace{1ex}\left|\hspace{1ex} 
 \begin{varwidth}{10 in} \setstretch{1.5}
$0\leq a \leq c$, and\\
$\ell(a) \leq b \leq u(a)$
 \end{varwidth} 
 \right.
\right\}\]
for some constant $c$, and piecewise linear functions $\ell$ and $u$. This convex set is equal to the Newton-Okounkov body of $D$, with valuation given by the order of vanishings along $D_1$ and $D_2$, as explained in \cite{LM}. We abuse terminology by referring to this Newton-Okounkov body as the Newton polytope of the entire class of $D$ in $\Pic(X)$.

\subsection{Hilbert Schemes of Points on Surfaces}\label{Hilbbg}

Let $X$ be a smooth, irreducible surface over $\C$, and $n\geq 2$ an integer. We assume that the irregularity of the surface vanishes $q(X) = h^1(\Oo_X)=0$, which is the case when $X$ is a toric surface. The Hilbert scheme of $n$ points on $X$, denoted $X^{[n]}$, parametrizes zero-dimensional subschemes of $X$ of length $n$. The simplest such subschemes are the reduced subschemes supported on $n$ distinct points of $X$, which is why $X^{[n]}$ is referred to as the Hilbert scheme of points on $X$.\\

Let $X^{(n)} = X^n/S_n$ denote the symmetric power of $X$. The Hilbert scheme $X^{[n]}$ comes equipped with the Hilbert-Chow morphism $X^{[n]}\to X^{(n)}$, which maps a length $n$ subscheme $Z\subseteq X$ to its support counted with multiplicity. A fundamental result of Fogarty states that the Hilbert scheme $X^{[n]}$ is a smooth, irreducible, variety of dimension $2n$ \cite{Fo1}. Furthermore, $X^{[n]}$ is projective whenever the surface $X$ is projective. The Hilbert-Chow morphism $ X^{[n]}\to X^{(n)} $ is a birational resolution of singularities, and an isomorphism over the dense open set of reduced subschemes $U\subseteq X^{[n]}$.\\

When $X$ is a toric surface, $X^{[n]}$ is not typically a toric variety, but it does inherit a two-dimensional torus action from that on $X$. This is the diagonal torus action, where $t\in T$ sends the reduced subscheme supported on $\{p_1,\dots,p_n\}$ to the reduced subscheme supported on $\{t\cdot p_1,\dots,t\cdot p_n\}$.  \\

In \cite{Fo2}, Fogarty constructs a linear embedding of divisors $\Pic(X)\hookrightarrow \Pic(X^{[n]})$ as follows: A divisor $D\in \Pic(X)$ determines a symmetric divisor $\pi_1^*D+\cdots +\pi_n^*D$ on $X^n$, where $\pi_1,\dots,\pi_n$ are the coordinate projections $X^n\to X$. By symmetry this divisor descends to a divisor $D_{(n)}$ on the symmetric power $X^{(n)}$. The divisor $D_n\in \Pic(X^{[n]})$ is defined to be the pullback of $D_{(n)}$ via the Hilbert-Chow morphism. The exceptional divisor of the Hilbert-Chow morphism, the locus of nonreduced schemes, is an irreducible divisor which we denote by $B$. As noted in the introduction, we often express divisors in terms of the class $E=-\frac12B$ rather than $B$, which is purely a notational choice. Under the hypotheses $q(X)=0$, Fogarty shows that there is an isomorphism
\[ \Pic(X^{[n]})\simeq \Pic(X)\times \Z\cdot E. \]

\subsection{An Algebraic Model of $(\C^2)^{[n]}$}\label{algsection}

In this section we mainly follow Haiman's study of the Hilbert scheme $(\C^2)^{[n]}$ from \cite{Ha2}. Consider the polynomial ring  $\C[{\bf x, y}]= \C[x_1,\dots,x_n,y_1,\dots,y_n],$ with the diagonal action of the symmetric group $S_n$. This means that $S_n$ permutes the pairs $(x_1,y_1),\dots,(x_n,y_n)$ in blocks, i.e.
\[ w x_i = x_{w(i)},\hspace{1cm}w y_i = y_{w(i)} \hspace{1cm} \text{for all }w\in S_n. \]

With respect to this diagonal action, let $A^0 = \C[{\bf x, y}]^{S_n}$ be the space of symmetric polynomials, and $A^1 = \C[{\bf x, y}]^{\epsilon}$ the space of alternating polynomials. For $r>1$, define $A^r$ to be the linear span of all $r$-fold products of alternating polynomials. These spaces can be assembled into a graded ring, which we denote by $S = A^0\oplus A^1 \oplus A^2 \oplus \cdots$. 

\begin{theorem}[Haiman \cite{Ha2}]
The Hilbert scheme of points $(\C^2)^{[n]}$, equipped with the Hilbert-Chow morphism, is isomorphic to $\Proj(S)$ as a scheme over the symmetric power $(\C^2)^{(n)} = \Spec A^0$.
\end{theorem}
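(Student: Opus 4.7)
The plan is to identify $\Proj(S)$ with an explicit blow-up of $(\C^2)^{(n)}$, and then match this blow-up with the Hilbert-Chow morphism via the determinant of the tautological bundle.

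First I would reduce to an algebraic statement about the Veronese subring $S^{(2)} = \bigoplus_{r\geq 0} A^{2r}$, noting $\Proj(S) = \Proj(S^{(2)})$. Since a product of an even number of alternating polynomials is symmetric, each $A^{2r}$ sits as a subspace of $A^0$ inside $\C[{\bf x},{\bf y}]$. Setting $J := A^2$, regarded as an ideal of $A^0$, the identity $A^{2r}=J^r$ holds: the inclusion $J^r \subseteq A^{2r}$ is immediate, and conversely any $2r$-fold product of alternants $h_1 \cdots h_{2r}$ can be grouped as $(h_1 h_2)\cdots(h_{2r-1}h_{2r})$, a product of $r$ elements of $J$. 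Consequently $S^{(2)}$ is isomorphic to the Rees algebra $\bigoplus_{r \geq 0} J^r t^r$ of $J$ inside $A^0$, giving
\[ \Proj(S) \;=\; \Proj(S^{(2)}) \;=\; \mathrm{Bl}_{J}\,(\C^2)^{(n)}. \]

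Next I would introduce the geometric counterpart on the Hilbert scheme. Let $\pi\colon Z \to (\C^2)^{[n]}$ denote the universal family and set $L := \det \pi_*\Oo_Z$, a line bundle on $(\C^2)^{[n]}$. On the open stratum of reduced subschemes, $L^{\otimes r}$ is naturally trivialized in terms of Vandermonde-type determinants, identifying its global sections with $A^r$. One then verifies that the pullback of the ideal sheaf cut out by $J$ to $(\C^2)^{[n]}$ is invertible---supported on a multiple of the exceptional divisor $B$ of the Hilbert-Chow morphism. The universal property of the blow-up then produces a canonical morphism $\phi\colon (\C^2)^{[n]} \to \mathrm{Bl}_{J}\,(\C^2)^{(n)} = \Proj(S)$ over $(\C^2)^{(n)}$. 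This morphism is birational (both sides agree on the open locus of $n$ distinct points) and proper (by Fogarty's theorem that $(\C^2)^{[n]}$ is projective over $(\C^2)^{(n)}$).

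The main obstacle is upgrading $\phi$ to an isomorphism. This reduces to showing that $L$ is relatively ample for the Hilbert-Chow morphism and that the natural map of $A^0$-algebras $\bigoplus_{r} H^0((\C^2)^{[n]}, L^{\otimes r}) \to S$ is an isomorphism, both of which are delicate on fibers over clustered configurations of points, where the scheme structure of the fiber interacts nontrivially with the tautological bundle. In Haiman's original approach this hinges on a local analysis of the universal family by way of the isospectral Hilbert scheme (cf.\ the proof of Proposition 2.6 in \cite{Ha2}); once these two facts are established, standard Proj formalism yields the desired isomorphism over $(\C^2)^{(n)}$.
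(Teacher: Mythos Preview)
The paper does not give a proof of this statement; it is quoted as a theorem of Haiman with a citation to \cite{Ha2} and then used as a black box. There is therefore no proof in the paper to compare your proposal against.

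That said, your outline is a faithful sketch of Haiman's own argument: the identification of $\Proj(S)$ with the blow-up of $(\C^2)^{(n)}$ along the ideal $A^2\subseteq A^0$ via the $2$-Veronese, and the passage to the Hilbert scheme through the determinant of the tautological bundle and the universal property of blow-ups, is how the proof in \cite{Ha2} proceeds. You are right that the substantive work lies in upgrading $\phi$ to an isomorphism, and right to defer to the reference for this. Two small quibbles: the natural map of graded $A^0$-algebras runs from $S$ to the section ring $\bigoplus_r H^0((\C^2)^{[n]},L^{\otimes r})$, not the other way; and you should be careful not to invoke the identification $H^0(\Oo(r))\simeq A^r$ established later in this paper (Corollary~\ref{int}), since that result takes the present theorem as input.
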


The above isomorphism equips $(\C^2)^{[n]}$ with an ample line bundle $\Oo_{(\C^2)^{[n]}}(1)$, or simply $\Oo(1)$. The line bundle $\Oo(1)$ is linearly equivalent to $\Oo(E) \simeq \Oo(-\frac12B)$, where $B$ is the divisor of nonreduced subschemes of $\C^2$. The space of global sections $H^0((\C^2)^{[n]},\Oo(r))$ is isomorphic to the degree $r$ piece of the integral closure of $S$ (\cite{Hart}, Section 2 Ex. 5.14). In fact $S$ is already integrally closed, a result which we later deduce from Haiman's results, along with our study of valuations (Corollary \ref{int}). This may be known to experts, but we have been unable to locate a reference.\\

Let $J\subseteq \C[\bf{x,y}]$ denote the ideal generated by $A^1$. Define $\overline{S} = \overline{A}^0\oplus \overline{A}^1\oplus \overline{A}^2\oplus \cdots$ where $\overline{A}^{2r} = A^{0}\cap J^{2r}$ and $\overline{A}^{2r+1} = A^{1}\cap J^{2r+1}$ for all $r\geq 0$. 

\begin{lemma}\label{bounds}
For all $r\geq0$, we have
\[A^r\subseteq H^0((\C^2)^{[n]},\Oo(r))\subseteq \overline{A}^r \] 
In the cases $r=0,1$ there is equality $A^0=\overline{A}^0$, and $A^1=\overline{A}^1$.
\end{lemma}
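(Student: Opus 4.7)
My plan is to prove the two inclusions separately. For the lower bound $A^r \subseteq H^0((\C^2)^{[n]}, \Oo(r))$, I invoke the construction $(\C^2)^{[n]} = \Proj S$: the natural degree-preserving map $S \to \bigoplus_r H^0(\Proj S, \Oo(r))$ embeds $S_r = A^r$ into $H^0(\Oo(r))$, and this map is injective on each graded piece since $S$ is a domain (being a subring of $\C[\mathbf{x},\mathbf{y}]$).

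For the upper bound, the first step is to restrict sections to the open set $U = (\C^2)^{[n]} \setminus B$, which via the Hilbert--Chow morphism is isomorphic to the smooth locus $(\C^2)^n_{\neq}/S_n$ of $(\C^2)^{(n)}$, where $(\C^2)^n_{\neq}$ denotes the open set of $n$-tuples of distinct points. Because $\Oo(2) \cong \Oo(-B)$ trivializes on $U$, the bundle $\Oo(r)|_U$ is trivial for $r$ even and is the nontrivial $2$-torsion bundle associated to the sign character of $S_n$ for $r$ odd. Sections of $\Oo(r)|_U$ pull back to $\sgn^r$-equivariant regular functions on $(\C^2)^n_{\neq}$, which extend across its codimension-two complement to give $A^{r \bmod 2}$. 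Restriction is injective because $U$ is dense in the irreducible $(\C^2)^{[n]}$, so $H^0((\C^2)^{[n]}, \Oo(r)) \hookrightarrow A^{r \bmod 2}$.

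For the refinement $H^0 \subseteq J^r$, I use the saturation characterization of sections on $\Proj S$: any such $f$ satisfies $f \cdot (A^1)^N \subseteq A^{N+r}$ for some $N$. Since $J^k = \C[\mathbf{x},\mathbf{y}] \cdot (A^1)^k$ as an ideal, this translates into $f \cdot J^N \subseteq J^{N+r}$ inside $\C[\mathbf{x},\mathbf{y}]$. At each diagonal prime $\mathfrak{p}_{ij} = (x_i - x_j, y_i - y_j)$, which is a complete intersection whose $\mathfrak{p}_{ij}$-adic order is additive on products, a generic $a \in A^1$ has order exactly $1$; from $f a^N \in J^{N+r} \subseteq \mathfrak{p}_{ij}^{N+r}$ we deduce $f \in \mathfrak{p}_{ij}^r$, and varying $i,j$ yields $f \in \bigcap_{i<j} \mathfrak{p}_{ij}^r = J^{(r)}$, the $r$-th symbolic power of $J$. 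The final identification $J^{(r)} = J^r$ for the diagonal ideal closes the argument. The equality cases $r = 0,1$ are immediate from the definitions: $\overline{A}^0 = A^0$ since $J^0 = \C[\mathbf{x},\mathbf{y}]$, and $\overline{A}^1 = A^1$ since $A^1 \subseteq J$.

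The main obstacle is this comparison of ordinary and symbolic powers of $J$; without it the valuation argument gives only $H^0 \subseteq A^{r \bmod 2} \cap J^{(r)}$. A possible alternative is a local analysis near the generic point of $B$, where $(\C^2)^{[n]}$ is modeled by $(\C^2)^{[2]} \times (\C^2)^{(n-2)}$, reducing extension across $B$ to the $n = 2$ case in which $J$ is generated by a regular sequence and the two ideals coincide. One would then need to globalize this local extension condition to conclude $f \in J^r$ rather than merely membership in each $\mathfrak{p}_{ij}^r$.
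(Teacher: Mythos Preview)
Your argument is essentially correct, and the obstacle you flag is exactly the deep input the paper also uses. The identity $J^{(r)} = J^r$, i.e.\ $J^r = \bigcap_{i<j}(x_i-x_j,y_i-y_j)^r$, is Haiman's Proposition~4.3 in \cite{Ha1}, proved via the Polygraph Theorem; the paper cites it explicitly. Once you invoke that result your proof is complete. Your proposed local reduction to $n=2$ does not avoid it: vanishing to order $r$ along each $\mathfrak{p}_{ij}$ is by definition membership in $J^{(r)}$, and upgrading that to membership in $J^r$ \emph{is} Haiman's theorem.

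The paper organizes the argument differently. Rather than your two-step route (restrict to $U$ to land in $A^{r \bmod 2}$, then use saturation plus $\mathfrak{p}_{ij}$-adic valuations to land in $J^{(r)}$), the paper identifies $\bigoplus_r H^0(\Oo(r))$ with the integral closure of $S$ (Hartshorne II.5, Ex.~5.14) and then shows directly that $\overline{S}$ is integrally closed. It does this by writing $\overline{S}$ as the intersection inside $\C[\mathbf{x,y},t]$ of two integrally closed subrings: the Rees ring $\C[\mathbf{x,y}][tJ]$ (integrally closed because Haiman's formula makes each $J^r$ a valuatively-defined, hence integrally closed, ideal) and the invariant ring $\C[\mathbf{x,y},t]^{S_n}$ for the sign-twisted action on $t$ (which picks out the alternating symmetry). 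Both approaches therefore rest on the same Haiman input; the paper's packaging via integral closure is slicker, while yours makes the role of the diagonal valuations more transparent. One small point: your saturation claim $f\cdot(A^1)^N\subseteq A^{N+r}$ is slightly stronger than what you actually use; for the valuation step it suffices that for each pair $(i,j)$ there is some $a\in A^1$ with $\mathrm{ord}_{\mathfrak{p}_{ij}}(a)=1$ and $fa^N\in A^{N+r}$, which follows from covering $\Proj S$ by finitely many $D_+(g)$.
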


\begin{proof}
The equalities $A^0=\overline{A}^0$ and $A^1=\overline{A}^1$ follow from the definitions, and the inclusion $A^r\subseteq \overline{A}^r$ is also straightforward. Since $H^0((\C^2)^{[n]},\Oo(r))$ is the degree $r$ part of the integral closure of $S$, it suffices to show that $\overline{S}$ is integrally closed.\\

Using his proof of the Polygraph Theorem, Haiman (Proposition 4.3 of \cite{Ha1}) proves that $J$ is equal to the radical ideal
\[ J = \bigcap_{i\neq j}(x_i-x_j,y_i-y_j), \] 
and more generally
\[ J^r = \bigcap_{i\neq j}(x_i-x_j,y_i-y_j)^r \]
for all $r\geq 1$. This implies that the ring $\C[\mathbf{x,y}][tJ]\subseteq \C[\mathbf{x,y},t]$ is integrally closed.\\

The following argument is also due to Haiman (\cite{Ha2}, p. 218): Consider the action of $S_n$ on $\C[\mathbf{x,y},t]$ extending the diagonal $S_n$ action on $\C[\mathbf{x,y}]$ by setting $\sigma(t) = \sgn(\sigma)t$. The ring of invariants $\C[\mathbf{x,y},t]^{S_n}$ is $A^0\oplus A^1 \oplus A^0\oplus A^1\oplus\cdots$, which implies that this ring is also integrally closed.\\

These two facts imply that $\overline{S}$ is integrally closed as well, as it is the intersection of two integrally closed subrings of $\C[\mathbf{x,y},t]$, completing the proof.
\end{proof}

The line bundles $\Oo(r)$ are $T$-linear, where $T\simeq (\C^*)^2$ acts on $(\C^2)^{[n]}$ diagonally as in the previous section. The $T$-action on $\Oo(r)$ induces a $\Z^2$-grading on the global sections of $\Oo(r)$. Under the inclusion $H^0((\C^2)^{[n]},\Oo(r))\subseteq \C[\mathbf{x,y}]$, this $\Z^2$-grading is inherited from the grading on $\C[\mathbf{x,y}]$ in which the monomial $x_1^{p_1}\cdots x_n^{p_n}y_1^{q_1}\cdots y_n^{q_n}$ has degree $(p_1+\cdots+p_n,q_1+\cdots+q_n)$. In other words, the spaces of global sections $H^0((\C^2)^{[n]},\Oo(r))$ are graded subspaces of $\C[\mathbf{x,y}]$ with respect to this $\Z^2$-grading.

\section{The Hilbert Scheme of points on $\C^2$}\label{c2section}

The main goal of this section is to compute the Newton-Okounkov body of the line bundles $\Oo(r)$ on $(\C^2)^{[n]}$, defined in the previous section. This computation is based on Lemma \ref{bounds}, which identifies the global sections of $\Oo(r)$ with certain sets of polynomials. In the notation introduced in the previous section, Lemma \ref{bounds} states that
\[ A^r\subseteq H^0((\C^2)^{[n]},\Oo(r))\subseteq \overline{A}^r \subseteq \C[\mathbf{x,y}] \]
for all $r\geq0$. We show that $A^r = \overline{A}^r$ for all $r\geq 0$ (Corollary \ref{int}), characterizing the spaces of sections exactly. The main step in the proof is to show that the sets of trailing terms obtained from polynomials in $A^r$ are the same as those obtained from $\overline{A}^r$, with respect to a certain term order. We use the corresponding trailing term valuation, so this computation also provides the valuations out of which the Newton-Okounkov bodies are constructed. Finally, we use this description of the sets of valuations to compute the Newton-Okounkov bodies (Theorem \ref{noc2}). \\

More specifically, we use the following valuation.

\begin{definition}\label{val}
Let $\nu:\C[{\bf x, y}]\setminus\{0\}\to \Z^{2n}$ be the trailing term valuation in the lexicographic term order, with $x_1>x_2>\cdots>x_n>y_1>y_2>\cdots>y_n$. 
\end{definition}

The valuation $\nu$ has one-dimensional leaves, as do all leading/trailing term valuations on polynomials. We use coordinates $(a_1,\dots,a_n,b_1,\dots,b_n)$ on $\Z^{2n}$, so that the $a_i$ coordinates correspond to the exponents of the $x_i$'s, and the $b_i$ coordinates correspond to the exponents of the $y_i$'s. For example, the terms of the polynomial $f = x_1^2x_2^2y_2+x_1x_2^4y_1^5y_2^3\in \C[x_1,x_2,y_1,y_2]$ have exponent vectors $(a_1,a_2,b_1,b_2) = (2,2,0,1),$ and $(1,4,5,3)$. So $\nu(f) = (1,4,5,3)$, the smaller vector lexicographically.\\

For later reference, we define sets $\Gamma_r$, which turn out the be the sets of valuations of polynomials in $A^r$ (and therefore in $H^0((\C^2)^{[n]},\Oo(r))$ as well).

\begin{definition}\label{semigroup}
For integers $r\geq 0$, set
\[\setstretch{1.5}
\Gamma_r :=  \left\{ 
\begin{varwidth}{10 in} \setstretch{1.5}
 $(p_1,\dots,p_n,$\\
 $q_1,\dots,q_n)\in \Z^{2n}_{\geq 0}$
 \end{varwidth} 
 \hspace{1ex}\left|\hspace{1ex} 
 \begin{varwidth}{10 in} \setstretch{1.5}
$p_1\leq p_2\leq\cdots\leq p_n$,\\
if $p_{j} = p_{j+1}$ then $q_{j+1}\geq q_j+r$, and\\
$q_j \geq  \sum_{\substack{i=1,\dots,j-1 \\ p_j-p_i<r}} (r-p_j+p_i)$ for all $j$ 
 \end{varwidth} 
 \right.
 \right\}.
 \]
Equivalently,
\[ \Gamma_r = \left\{ 
\begin{varwidth}{10 in} \setstretch{1.5}
$(p_1,\dots,p_n,$\\
 $q_1,\dots,q_n)\in \Z^{2n}_{\geq 0}$
 \end{varwidth} 
 \hspace{1ex}\left|\hspace{1ex} 
 \begin{varwidth}{10 in} \setstretch{1.5}
$p_1\leq p_2\leq\cdots\leq p_n$,\\
if $p_{j} = p_{j+1}$ then $q_{j+1}\geq q_j+r$, and\\
$q_j\geq (j-i)(r-p_j)+p_i+\cdots +p_{j-1} $,\\
for all $1\leq i \leq j \leq n$
 \end{varwidth} 
 \right.
 \right\}.
  \setstretch{1}
\]
\end{definition}
These descriptions are indeed equivalent, because $p_1\leq \cdots\leq p_n$ implies that
\begin{align*}
\sum_{\substack{\ell=1,\dots,j-1 \\ p_j-p_\ell<r}} (r-p_j+p_\ell) & = \max_{i=1,\dots,j}\left\{ \sum_{\ell=i}^{j-1} (r-p_j+p_\ell) \right\}\\
& =  \max_{i=1,\dots,j} \left\{ (j-i)(r-p_j)+p_i+\cdots +p_{j-1} \right\}
\end{align*}
for all $j$, as the first sum is over precisely the terms $(r-p_j+p_\ell)$ which are positive.\\

In Sections \ref{01} and \ref{semigroupsection} we give simpler descriptions of the sets $\Gamma_r$, but it is convenient to take the explicit description above as the definition.

\subsection{Bases for $A^0$ and $A^1$}\label{01}

The spaces $A^0$ and $A^1$ have well-known bases that realize $\Gamma_0$ and $\Gamma_1$ as their respective sets of valuations.\\

For any $n$-tuple $(\mathbf{p,q}) = ((p_1,q_1),\dots,(p_n,q_n))\in (\Z_{\geq 0}^2)^n$, possibly with repetitions, there is a monomial symmetric polynomial
\[ m_{(\mathbf{p,q})}(\mathbf{x,y}) = x_1^{p_1}\cdots x_n^{p_n}y_1^{q_1}\cdots y_n^{q_n} + (\text{symmetric terms}) \in A^0. \]
The polynomials $m_{(\mathbf{p,q})}(\mathbf{x,y}) $ form a linear basis for the space of all symmetric polynomials $A^0$ as $(\mathbf{p,q})$ ranges over all such $n$-tuples up to reordering. We label $(\mathbf{p,q})$ so that $(p_1,q_1)\leq (p_2,q_2)\leq \cdots \leq (p_n,q_n)$ in lexicographic order, which implies that 
\[\nu(m_{(\mathbf{p,q})}(\mathbf{x,y})) = (p_1,\dots,p_n,q_1,\dots,q_n).\]
Since each basis element has a different valuation, these are all of the valuations that can be obtained from $A^0$. In other words, we have
\[ \{ \nu(f) \hspace{1ex}|\hspace{1ex} f\in A^0\setminus \{0\} \} = \left\{ (p_1,\dots,p_n,q_1,\dots,q_n)\in \Z^{2n}_{\geq0} \hspace{1ex}\left|\hspace{1ex}
 \begin{varwidth}{10 in} \setstretch{1.5}
$(p_1,q_1)\leq \cdots \leq (p_n,q_n)$\\
in lex
 \end{varwidth} 
 \right.
 \right\} .
 \]
 
Similarly, for any $n$-tuple $(\mathbf{p,q}) = ((p_1,q_1),\dots,(p_n,q_n))\in (\Z_{\geq 0}^2)^n$ of distinct pairs there is an alternating polynomial
\[ d_{(\mathbf{p,q})}(\mathbf{x,y}) = \det(x_i^{p_j}y_i^{q_j})_{1\leq i,j\leq n} \in A^1. \]
The determinants $ d_{(\mathbf{p,q})}(\mathbf{x,y}) $ form a linear basis for the space of all alternating polynomials $A^1$ as $(\mathbf{p,q})$ ranges over all such $n$-tuples up to reordering. Just as before, we label $(\mathbf{p,q})$ so that $(p_1,q_1)< (p_2,q_2)< \cdots < (p_n,q_n)$ in lexicographic order, which implies that \[ \nu(d_{(\mathbf{p,q})}(\mathbf{x,y})) = (p_1,\dots,p_n,q_1,\dots,q_n). \]
By the same argument, we therefore have
\[ \{ \nu(f) \hspace{1ex}|\hspace{1ex} f\in A^1\setminus \{0\} \} = \left\{ (p_1,\dots,p_n,q_1,\dots,q_n)\in \Z^{2n}_{\geq0} \hspace{1ex}\left|\hspace{1ex}
 \begin{varwidth}{10 in} \setstretch{1.5}
$(p_1,q_1)< \cdots < (p_n,q_n)$\\
in lex
 \end{varwidth} 
 \right.
 \right\} .
 \]

\begin{lemma}\label{alt01}
\[ \Gamma_0 = \left\{ (p_1,\dots,p_n,q_1,\dots,q_n)\in \Z^{2n}_{\geq0} \hspace{1ex}\left|\hspace{1ex}
 \begin{varwidth}{10 in} \setstretch{1.5}
$(p_1,q_1)\leq \cdots \leq (p_n,q_n)$\\
in lex
 \end{varwidth} 
 \right.
 \right\} , \]
and 
\[ \Gamma_1 = \left\{ (p_1,\dots,p_n,q_1,\dots,q_n)\in \Z^{2n}_{\geq0} \hspace{1ex}\left|\hspace{1ex}
 \begin{varwidth}{10 in} \setstretch{1.5}
$(p_1,q_1)< \cdots < (p_n,q_n)$\\
in lex
 \end{varwidth} 
 \right.
 \right\} . \]
\end{lemma}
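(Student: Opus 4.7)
The plan is to prove this lemma by pure bookkeeping on the inequalities defining $\Gamma_r$ in Definition \ref{semigroup}, specialized to $r=0$ and $r=1$. No algebraic geometry is needed — the content is just that the third family of inequalities becomes either vacuous ($r=0$) or redundant ($r=1$) in these two cases, and what remains is exactly a lexicographic order condition.

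For $r=0$, the first step is to observe that in the sum
\[ q_j \geq \sum_{\substack{1\leq i\leq j-1 \\ p_j-p_i<0}} (-p_j+p_i), \]
the index set is empty, since the condition $p_1\leq p_2\leq \cdots \leq p_n$ forces $p_j-p_i\geq 0$ whenever $i\leq j$. So this inequality reduces to $q_j\geq 0$, which is already imposed by the ambient set $\Z^{2n}_{\geq 0}$. The two surviving conditions — weak monotonicity of the $p_i$'s, together with the implication "if $p_j=p_{j+1}$ then $q_{j+1}\geq q_j$" — are precisely the assertion that the pairs $(p_1,q_1),\dots,(p_n,q_n)$ are weakly increasing in lexicographic order. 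This identifies $\Gamma_0$ with the desired set.

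For $r=1$, the third inequality simplifies to
\[ q_j \geq |\{ i<j \mid p_i = p_j \}|, \]
since for $i\leq j$ the condition $p_j-p_i<1$ is equivalent to $p_i=p_j$, and each such term contributes $1-p_j+p_i=1$. The plan is to show this inequality is implied by the first two conditions. If the maximal run of indices with $p$-value $p_j$ ending at position $j$ starts at position $j-k$, then iterating the strict increase $q_{i+1}\geq q_i+1$ inside this run yields $q_j \geq q_{j-k}+k \geq k$, exactly the required bound. Thus the third inequality may be dropped, and the remaining conditions — weak ordering of the $p$'s together with strict increase of the $q$'s within each block of equal $p$-values — are precisely the strict lexicographic order on the pairs $(p_i,q_i)$.

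There is no real obstacle here; the only point demanding any care is the translation between "blockwise strict $q$-increase" and the compact "strict lexicographic order" formulation, which are easily seen to be equivalent by considering what happens across boundaries between distinct $p$-values (where the $p$-strict inequality takes over) versus inside a block of equal $p$-values (where the $q$-strict inequality takes over).
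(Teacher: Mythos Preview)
Your proof is correct and follows essentially the same approach as the paper's: in both cases you specialize the third family of inequalities in Definition \ref{semigroup}, observe that for $r=0$ the summation is empty (reducing to $q_j\geq 0$) and for $r=1$ it counts indices $i<j$ with $p_i=p_j$ (which is then redundant given the blockwise strict $q$-increase), and finally identify the surviving conditions with the (strict) lexicographic order on pairs. Your write-up is in fact slightly more explicit than the paper's in spelling out the block argument and the lex-order translation.
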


\begin{proof}
In both cases, we study the inequalities 
\[q_j \geq  \sum_{\substack{i=1,\dots,j-1 \\ p_j-p_i<r}} (r-p_j+p_i) \]
for $j=1,\dots,n$ appearing in the definition of $\Gamma_r$. When $r=0$ the inequalities $p_1\leq p_2\leq\cdots\leq p_n$ imply that $p_j-p_i<r$ is never satisfied. These inequalities therefore reduce to $q_j\geq 0$ for all $j$. One checks that the remaining conditions defining $\Gamma_0$ precisely describe the set in the statement of the lemma.\\

Similarly, when $r=1$ the condition $p_j-p_i<r$ is equivalent to $p_i=p_j$, so the inequalities above reduce to
\[ q_j\geq \# \{ i=1,\dots,j-1 \hspace{1ex}|\hspace{1ex} p_i=p_j \}.\] 
This condition is redundant though, because the other conditions imply that $0\leq q_i<q_{i+1}<\cdots <q_j$ whenever $p_i=p_{i+1}= \cdots =p_j$. As before, the remaining conditions defining $\Gamma_1$ precisely describe the set in the statement of the lemma.
\end{proof}

Finally, we note that each polynomial $d_{(\mathbf{p,q})}(\mathbf{x,y})$ and $m_{(\mathbf{p,q})}(\mathbf{x,y})$ is homogeneous with respect to the $\Z^2$-grading on $\C[\mathbf{x,y}]$ defined in Section \ref{algsection}. 

\subsection{Valuations from $\overline{A}^r$ for $r>1$}\label{upper}

We aim to show that for any $r>1$, $\Gamma_r$ contains the valuations $\nu(f)$ of all nonzero polynomials $f\in \overline{A}^r$. We prove a slightly more general result, Proposition \ref{np}, relating the Newton polytope of a polynomial $f\in A^r$ to its valuation. This general version has essentially the same proof, and is used in Section \ref{projsection} when we turn to the general toric surface case. For brevity, we say that a polynomial $f$ is (anti-)symmetric if it is either symmetric or alternating with respect to a specified symmetric group action.

\begin{lemma}\label{divis}
Let $f$ be a polynomial in $x,x',y,y'$, and possibly other variables. Suppose that $f$ is (anti-)symmetric with respect to the $S_2$ action exchanging $x$ and $y$ with $x'$ and $y'$. Additionally, suppose that $f$ is contained in the ideal
\[ I^r  = (x-x',y-y')^r = ((x-x')^r, (x-x')^{r-1}(y-y'),\dots,(y-y')^r) \]
for some fixed $r\geq0.$ Let $p$ denote the largest integer such that $x^p|f$. For any fixed $p' = p,p+1,\dots, p+r-1$, the polynomial $[x^p(x')^{p'}]f$ is divisible by $(y-y')^{r-p'+p}$.
\end{lemma}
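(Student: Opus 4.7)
The strategy is to use the $S_2$ (anti-)symmetry of $f$ to factor $f = x^p(x')^p h$ with $h \in I^r$, and then to Taylor expand $h$ in $u = x-x'$ and $w = y-y'$ to read off the claimed divisibility. Since the nontrivial element of $S_2$ swaps $x \leftrightarrow x'$ and $y \leftrightarrow y'$, the hypothesis $x^p \mid f$ combined with the (anti-)symmetry of $f$ forces $(x')^p \mid f$ as well; as $x$ and $x'$ are independent variables this gives $x^p(x')^p \mid f$, so $f = x^p(x')^p h$ for a unique polynomial $h$.

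The key intermediate claim is that $h \in I^r$. After the change of variables $x = x' + u$, $y = y' + w$, every polynomial admits a unique Taylor expansion $h = \sum_{a, b \geq 0} u^a w^b h_{ab}(x', y', \ldots)$ with the $h_{ab}$ depending only on the remaining variables. Expanding $x^p(x')^p = \sum_{\ell} \binom{p}{\ell} (x')^{2p - \ell} u^\ell$, the coefficient of $u^a w^b$ in $x^p(x')^p h$ is $\sum_{\ell = 0}^{\min(p, a)} \binom{p}{\ell} (x')^{2p - \ell} h_{a - \ell, b}$. This coefficient vanishes whenever $a + b < r$ because $f \in I^r = (u, w)^r$. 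An induction on $a$ (with $b$ fixed) isolates the $\ell = 0$ term $(x')^{2p} h_{ab}$ once the lower-order $h_{a-\ell, b}$ vanish by the inductive hypothesis, and since $(x')^{2p}$ is a nonzerodivisor this yields $h_{ab} = 0$ for all $a + b < r$. Thus $h \in I^r$.

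Finally, matching coefficients of $x^p (x')^{p'}$ in $f = x^p(x')^p h$ gives $[x^p (x')^{p'}] f = [x^0 (x')^{p' - p}] h$. Expanding $u^a = \sum_k \binom{a}{k} (-1)^{a-k} x^k (x')^{a-k}$ inside the Taylor expansion of $h$, only the $k = 0$ piece of each $u^a$ contributes a monomial free of $x$, which forces $a \leq p' - p$ in every nonzero contribution. Combined with the constraint $a + b \geq r$ coming from $h \in I^r$, the smallest possible $b$ is $r - (p' - p) = r - p' + p$, and $(y - y')^{r - p' + p}$ divides $[x^p (x')^{p'}] f$ as claimed.

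The main obstacle I anticipate is the middle step, passing from $x^p(x')^p h \in I^r$ to $h \in I^r$. Conceptually this is the statement that $I^r$ is $I$-primary (automatic because $I$ is a complete-intersection prime in a regular ring), but in this concrete setting it is cleanest to verify it via the explicit Taylor-coefficient induction above, which avoids any appeal to general commutative algebra.
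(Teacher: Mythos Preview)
Your proof is correct and follows essentially the same arc as the paper's: use (anti-)symmetry to factor out $(xx')^p$ and reduce to $p=0$, then exploit the structure of $I^r=(u,w)^r$ with $u=x-x'$, $w=y-y'$ to read off the $(y-y')$-divisibility. The execution differs. The paper builds an iterative decomposition $f=\sum_{k=0}^r (x-x')^k(y-y')^{r-k}g_k$ with $g_0,\dots,g_{r-1}$ free of $x'$, sets $x=0$, and reads off $[x^0(x')^{p'}]f=(y-y')^{r-p'}\bar g_{p'}$ directly. You instead use the full Taylor expansion $h=\sum_{a,b}u^aw^bh_{ab}$ and derive the divisibility from the two constraints $a+b\ge r$ and $a\le p'-p$. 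Your route is a bit more uniform, and it has one concrete advantage: you explicitly verify via the coefficient induction that $h=f/(xx')^p$ still lies in $I^r$, whereas the paper asserts ``we may assume $p=0$'' and then uses $f\in I^r$ for the quotient without comment. (This is true because $I^r$ is $I$-primary and $xx'\notin I$, but your elementary argument makes the lemma self-contained.)
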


Here we use the coefficient extraction operator: $[x^p(x')^{p'}]f$ is defined to be the polynomial obtained by summing all the terms of $f$ whose exponents on $x$ and $x'$ are exactly $p$ and $p'$ respectively, then dividing by the common factor $x^p(x')^{p'}$.

\begin{proof}
By assumption, $f$ is divisible by $x^p$, so by (anti-)symmetry it is also divisible by $(x')^p$. Dividing $f$ by $(xx')^p$, we may assume without loss of generality that $p=0$. \\

The assumption $f\in I^r$ implies that, replacing $x'$ by $x$ in the expression for $f$, we can write $f|_{x'=x}= (y-y')^r g_0$ for some polynomial $g_0$ which does not depend on $x'$. It follows that $f - (y-y')^r g_0$ is divisible by $(x-x')$, and we define $f_1$ to be the polynomial such that $f = (y-y')^{r}g_0+(x-x') f_1$. This $f_1$ is contained in the ideal $I^{r-1}$, so we can repeat this argument to get an expression 
\[ f =\sum_{k=0}^r (x-x')^k(y-y')^{r-k}g_k \]
in which none of the polynomials $g_0,\dots,g_{r-1}$ depend on $x'$. It follows that
\[ [x^0]f = f|_{x=0} =\sum_{k=0}^r (x')^k(y-y')^{r-k}\overline g_k \]
where $\overline g_k$ is obtained from $g_k$ by setting $x=0$, and possibly changing the sign. But since none of the polynomials $\overline g_0,\dots,\overline g_{r-1}$ depend on $x'$, the terms of $[x^0]f$ are grouped in this sum according to their exponent on $x'$. One sees from this expression that $[x^0(x')^{k}]f = (y-y')^{r-k}\overline g_k$ for all $k=0,\dots, r-1$, as desired.
\end{proof}

The Newton polytope of a polynomial in $d$ variables is the convex hull in $\R^d$ of the exponent vectors of the nonzero terms of the polynomial. The Newton polytope of a product of polynomials is the Minkowski sum of the Newton polytopes of its factors. Lemma \ref{divis} allows us to establish a lower bound for the Newton polytope of a polynomial in $\overline{A}^r$ with a given valuation.

\begin{proposition}\label{np}
For any $r>1$, let $f\in \overline{A}^r$ be a polynomial with valuation $(p_1,\dots,p_n,q_1,\dots,q_n)$. For each $j=1,\dots,n$, there exist points in the Newton polytope of $f$ whose $(a_j,b_j)$ coordinates are
\[ \left( p_j, q_j -  \sum_{\substack{i=1,\dots,j-1 \\ p_j-p_i<r}} (r-p_j+p_i)\right) \text{, and } \left( p_j, q_j + \sum_{\substack{k = j+1,\dots,n \\ p_k-p_j<r}} (r-p_k+p_j)\right). \]
Furthermore, for any $j=1,\dots,n-1$ such that $p_j=p_{j+1}$, we have $q_{j+1}\geq q_j+r$.
\end{proposition}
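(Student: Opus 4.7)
The plan is to establish that $H:=[x_1^{p_1}\cdots x_n^{p_n}]f\in\C[y_1,\ldots,y_n]$ is divisible by the product $\prod_{a<b,\,p_b-p_a<r}(y_a-y_b)^{r-p_b+p_a}$, read off both promised points from the Minkowski-sum structure of the resulting Newton polytope, and derive $q_{j+1}\geq q_j+r$ from a residual symmetry of the quotient. Throughout write $\Delta_-:=\sum_{i<j,\,p_j-p_i<r}(r-p_j+p_i)$ and $\Delta_+:=\sum_{k>j,\,p_k-p_j<r}(r-p_k+p_j)$ for the two offsets in the claim.

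For each pair $a<b$ with $p_b-p_a<r$, I would apply Lemma \ref{divis} to the intermediate polynomial $f_{a-1}:=[x_1^{p_1}\cdots x_{a-1}^{p_{a-1}}]f$ with the variable pair $(x_a,x_b,y_a,y_b)$. Three hypotheses must be verified: (i) $f_{a-1}$ is still (anti-)symmetric under the transposition $(a,b)$, since this transposition lies in the stabilizer in $S_n$ of the extracted monomial; (ii) $f_{a-1}\in(x_a-x_b,y_a-y_b)^r$, since this ideal does not involve $x_1,\ldots,x_{a-1}$ and coefficient extraction in disjoint variables preserves ideal membership; and (iii) the minimum $x_a$-exponent of $f_{a-1}$ equals $p_a$, which follows from the lex-trailing structure of $f$ (among terms with $x_1,\ldots,x_{a-1}$-exponents fixed at $p_1,\ldots,p_{a-1}$, the lex-trailing further minimizes $x_a$, giving $p_a$). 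Lemma \ref{divis} applied with $p=p_a$ and $p'=p_b\leq p_a+r-1$ then yields that $[x_a^{p_a}x_b^{p_b}]f_{a-1}$ is divisible by $(y_a-y_b)^{r-p_b+p_a}$; extracting the remaining $x_i^{p_i}$ preserves this divisibility, giving the same for $H$. Since the $(y_a-y_b)$ are pairwise coprime irreducibles, these divisibilities combine into divisibility by the full product.

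Writing $H=\prod(y_a-y_b)^{k_{ab}}\cdot H'$ with $k_{ab}=r-p_b+p_a$, I would invoke the characteristic-zero identity that the Newton polytope of a product is the Minkowski sum of the factors' Newton polytopes. Each factor $(y_a-y_b)^{k_{ab}}$ has a segment Newton polytope with endpoints at the exponent vectors of $y_a^{k_{ab}}$ and $y_b^{k_{ab}}$. The lex-trailing of the product picks the $y_b$-endpoint from each factor (as $y_b<y_a$ in lex for $a<b$), so it contributes exactly $\Delta_-$ to the $y_j$-coordinate; hence the lex-trailing of $H'$ has $y_j$-coordinate $q_j-\Delta_-$. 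Summing the minimum-$y_j$ vertex of the product (endpoint with zero $y_j$-exponent in every factor involving $y_j$) with the trailing of $H'$ yields a point of the Newton polytope of $H$ with $y_j$-exponent $q_j-\Delta_-$. Symmetrically, the maximum-$y_j$ vertex of the product (endpoint with $y_j$-exponent $k_{ab}$ in every $j$-involving factor) contributes $\Delta_-+\Delta_+$ in $y_j$, which summed with the trailing of $H'$ gives $y_j$-exponent $q_j+\Delta_+$. Lifting both Newton-polytope points of $H$ via the extraction (which fixes all $x$-coordinates to $(p_1,\ldots,p_n)$) produces the two required points of the Newton polytope of $f$, each with $a_j=p_j$.

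For the final inequality, when $p_j=p_{j+1}$ one has $k_{j,j+1}=r$, so $(y_j-y_{j+1})^r$ divides $H$; write $H=(y_j-y_{j+1})^r G$. Since the extracted monomial is fixed by the $(j,j+1)$-swap and $f$ transforms by $(-1)^r$ under this swap, $H$ is $(-1)^r$-symmetric in $y_j\leftrightarrow y_{j+1}$; matching signs with $(y_j-y_{j+1})^r$ forces $G$ to be honestly symmetric under this swap. The lex-trailing of $G$ has $(y_j,y_{j+1})$-exponents $(q_j,q_{j+1}-r)$; its image under the swap is a term of $G$ with the same nonzero coefficient and $(y_j,y_{j+1})$-exponents $(q_{j+1}-r,q_j)$, and for the trailing to remain lex-minimal one needs $q_j\leq q_{j+1}-r$. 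The main obstacle I anticipate is hypothesis (iii) above: pinning the minimum $x_a$-exponent of $f_{a-1}$ to exactly $p_a$ rather than merely $p_1$, which requires the careful trailing-term argument indicated rather than a crude $S_n$-symmetry observation.
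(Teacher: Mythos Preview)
Your proof is correct and follows essentially the same approach as the paper: apply Lemma~\ref{divis} to deduce that $H=[x_1^{p_1}\cdots x_n^{p_n}]f$ is divisible by each $(y_a-y_b)^{r-p_b+p_a}$, then read off the two Newton-polytope points from the Minkowski-sum decomposition, and finally use the residual $(y_j\leftrightarrow y_{j+1})$-symmetry to obtain $q_{j+1}\geq q_j+r$. The only cosmetic differences are that the paper extracts all $x_l^{p_l}$ with $l\neq i,k$ before invoking the lemma (your $f_{a-1}$ extracts only $x_1^{p_1}\cdots x_{a-1}^{p_{a-1}}$ and finishes the extraction afterward), and for the last inequality the paper divides by the full product and tracks the symmetry of $h$, whereas you divide only by $(y_j-y_{j+1})^r$; both variants work for the same reasons.
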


As usual, we use coordinates $(a_1,\dots,a_n,b_1,\dots,b_n)$ on $\Z^{2n}$ so that the $a_j$-coordinate corresponds to the exponent on $x_j$, and the $b_j$-coordinate corresponds to the exponent on $y_j$.

\begin{corollary}\label{gammaub}
For any $r>1$ and nonzero $f\in \overline{A}^r$, we have $\nu(f)\in\Gamma_r$.
\end{corollary}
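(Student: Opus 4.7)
The plan is to unpack the three defining conditions of $\Gamma_r$ given in Definition \ref{semigroup} and verify each one for a nonzero $f\in \overline{A}^r$ with valuation $\nu(f) = (p_1,\dots,p_n,q_1,\dots,q_n)$. Nonnegativity of the coordinates is automatic, since $\nu(f)$ is the exponent vector of a monomial of a polynomial. So I need to establish: (1) $p_1\leq p_2\leq\cdots\leq p_n$; (2) if $p_j=p_{j+1}$ then $q_{j+1}\geq q_j+r$; and (3) the lower bound $q_j \geq \sum_{i<j,\, p_j-p_i<r}(r-p_j+p_i)$ for each $j$. Statements (2) and (3) will follow almost mechanically from Proposition \ref{np}, while (1) is a separate lex-ordering observation driven by the (anti-)symmetry of $f$.

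For (1), the key point is that $\overline{A}^r \subseteq A^0\cup A^1$, so $f$ is either symmetric or alternating under the diagonal $S_n$ action permuting the pairs $(x_i,y_i)$. Suppose for contradiction that $p_j>p_{j+1}$ for some $j$. The trailing monomial $x_1^{p_1}\cdots x_n^{p_n}y_1^{q_1}\cdots y_n^{q_n}$ of $f$ has nonzero coefficient, so by (anti-)symmetry the monomial obtained by applying the transposition $\tau=(j,j+1)$ to it also appears in $f$ with nonzero coefficient (up to sign); the two monomials are genuinely distinct because $p_j\neq p_{j+1}$. In the lex order with $x_1>\cdots>x_n>y_1>\cdots>y_n$, this transposed monomial is strictly smaller, since the $x_j$-exponent drops from $p_j$ to $p_{j+1}$ while the earlier exponents are unchanged. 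This contradicts the fact that $\nu(f)$ picks out the trailing (lex-smallest) term, so $p_1\leq \cdots\leq p_n$.

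Condition (2) is one of the assertions of Proposition \ref{np}, applied verbatim. For (3), Proposition \ref{np} exhibits, for each $j$, a point in the Newton polytope of $f$ whose $(a_j,b_j)$-coordinates equal
\[
\left(p_j,\; q_j - \sum_{\substack{i=1,\dots,j-1 \\ p_j-p_i<r}} (r-p_j+p_i)\right).
\]
Because $f\in \C[\mathbf{x,y}]$ is a polynomial, its Newton polytope is contained in the nonnegative orthant $\R^{2n}_{\geq 0}$; in particular this point has nonnegative $b_j$-coordinate. This yields exactly the inequality $q_j \geq \sum_{i<j,\, p_j-p_i<r}(r-p_j+p_i)$ demanded by $\Gamma_r$.

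Assembling the three items shows $\nu(f)\in \Gamma_r$. The genuine obstacle is already absorbed into Proposition \ref{np} (whose proof rests on the polygraph-style divisibility in Lemma \ref{divis}); once that proposition is available, the corollary is essentially a bookkeeping exercise, with the only additional ingredient being the lex-order argument for (1). No delicate estimate or new geometric input is required at this stage.
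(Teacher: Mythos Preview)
Your proof is correct and follows essentially the same approach as the paper: both verify the three defining conditions of $\Gamma_r$, invoking Proposition \ref{np} directly for conditions (2) and (3), and using the (anti-)symmetry of $f\in\overline{A}^r$ for condition (1). The only cosmetic difference is that the paper obtains $p_1\leq\cdots\leq p_n$ by citing the characterization of valuations from $A^0$ and $A^1$ established in Section \ref{01} (via Lemma \ref{alt01}), whereas you spell out the underlying lex-order transposition argument inline; these are the same idea.
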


\begin{proof}[Proof of Corollary \ref{gammaub}]
Let $f\in \overline{A}^r$ be a nonzero polynomial. By Definition \ref{val}, $\nu(f) = (p_1,\dots,p_n,q_1,\dots,q_n)$ is in $\Gamma_r$ if and only if
\begin{enumerate}
\item $p_1\leq p_2\leq\cdots\leq p_n$,
\item $p_{j} = p_{j+1}$ implies that $q_{j+1}\geq q_j+r$, and
\item $q_j \geq  \sum_{\substack{i=1,\dots,j-1 \\ p_j-p_i<r}} (r-p_j+p_i)$ for all $j=1,\dots,n$. 
\end{enumerate}

Also by definition, we have $\overline{A}^r\subseteq A^0$ for even $r$, and $\overline{A}^r\subseteq A^1$ for odd $r$. By Lemma \ref{alt01} we have $(p_1,q_1)\leq \cdots \leq (p_n,q_n)$ in lex, and in particular $p_1\leq p_2\leq \cdots \leq p_n.$ The second condition is explicitly stated to hold in Proposition \ref{np}. Finally, since $f$ is a polynomial, the coordinates of any point in its Newton polytope are nonnegative. Proposition \ref{np} therefore implies that
\[ q_j \geq  \sum_{\substack{i=1,\dots,j-1 \\ p_j-p_i<r}} (r-p_j+p_i) \]
for all $j=1,\dots,n$, as desired.
\end{proof}

\begin{proof}[Proof of Proposition \ref{np}]
Let $g = g(y_1,\dots, y_n)$ be the polynomial $[x_1^{p_1}\cdots x_n^{p_n}]f$. For any two indices $1\leq i < k \leq n$ such that $p_k-p_i<r$, let \[ g_{ik} = g_{ik}(x_i,x_k,y_1,\dots,y_n) = [x_1^{p_1}\cdots \hat x_i\cdots \hat x_k \cdots x_n^{p_n}]f.\] By hypothesis, the lex trailing term of $f$ is $x_1^{p_1}\cdots x_n^{p_n}y_1^{q_1}\cdots y_n^{q_n}$, so the lex trailing term of $g$ is $y_1^{q_1}\cdots y_n^{q_n}$ and the lex trailing term of $g_{ik}$ is $x_i^{p_i}x_k^{p_k}y_1^{q_1}\cdots y_n^{q_n}.$ Since $f\in \overline A^r$ by assumption, $g_{ik}$ is (anti-)symmetric in the pairs of variables $(x_i,y_i)$ and $(x_k,y_k)$ and is contained in the ideal $(x_i-x_k,y_i-y_k)^r$. We apply Lemma \ref{divis} to $g_{ik}$, using $(x_i,y_i)$ and $(x_k,y_k)$ for $(x,y)$ and $(x',y')$ respectively. In the notation of Lemma \ref{divis} we have $p=p_i$, and choose $p' = p_k$. The lemma then says that $(y_i-y_k)^{r-p_k+p_i}$ divides $g = [x_i^{p_i}x_k^{p_k}]g_{ik}$.\\

Since $g$ is divisible by these factors for all such $i$ and $k$, we have
\[ g(y_1,\dots,y_n) = h(y_1,\dots,y_n)\prod_{\substack{1\leq i<k\leq n \\ p_k-p_i<r}}(y_i-y_k)^{r-p_k+p_i} \]
for some polynomial $h(y_1,\dots,y_n)$. The lex trailing term of $g$ is $y_1^{q_1}\cdots y_n^{q_n}$, and since the trailing term of each factor $(y_i-y_k)^{r-p_k+p_i}$ is $y_k^{r-p_k+p_i}$, the trailing term of $h$ is $y_1^{q_1'}\cdots y_n^{q_n'}$, where $q_j'$ satisfies
\[ q_j = q_j' + \sum_{\substack{i=1,\dots,j-1 \\ p_j-p_i<r}} (r-p_j+p_i) \]
for each $j=1,\dots,n$.\\

For any $i<k$ such that $ p_k-p_i<r$, let $\Delta_{ik}$ be the Newton polytope of $(y_i-y_k)^{r-p_k+p_i}$, which is the convex hull of the two vectors in the directions of the coordinates $b_i$ and $b_k$ with lengths $(r-p_k+p_i)$. By the expression of $g$ as a product, and the definition of $g$, the Newton polytope of $f$ contains the Minkowski sum
\[ \Delta := \{ (p_1,\dots,p_n,q_1',\dots,q_n') \} + \sum_{\substack{1\leq i<k\leq n \\ p_k-p_i<r}} \Delta_{ik}. \]

For each $j=1,\dots,n$, the $a_j$ coordinate is constant equal to $p_j$ over the whole set $\Delta$, so we study the maximum and minimum values of the $b_j$ coordinates. The minimum value of the $b_j$ coordinate on $\Delta$ is
\[ q_j' = q_j -  \sum_{\substack{i=1,\dots,j-1 \\ p_j-p_i<r}} (r-p_j+p_i), \] 
obtained by taking points from all the $\Delta_{ik}$'s whose $b_j$ coordinate is zero. Taking points in $\Delta_{ik}$ whose $b_j$ coordinate is as large as possible shows that the the maximum value of $b_j$ coordinate on $\Delta$ is
\[ q_j'+ \sum_{\substack{i=1,\dots,j-1 \\ p_j-p_i<r}} (r-p_j+p_i) + \sum_{\substack{k = j+1,\dots,n \\ p_k-p_j<r}} (r-p_k+p_j) = q_j + \sum_{\substack{k = j+1,\dots,n \\ p_k-p_j<r}} (r-p_k+p_j). \]
This proves the first claim of the proposition.\\

As for the second claim, fix some $j=1,\dots,n-1$ such that $p_j=p_{j+1}$. By hypothesis, $f$ is (anti-)symmetric under swapping $(x_j,y_j)$ with $(x_{j+1},y_{j+1})$. These assumptions imply that
\[ g(y_1,\dots,y_n) = h(y_1,\dots,y_n)\prod_{\substack{1\leq i<k\leq n \\ p_k-p_i<r}}(y_i-y_k)^{r-p_k+p_i} \]
is (anti-)symmetric in the variables $y_j,y_{j+1}$, as $g$ consists of terms of $f$ whose exponents on $x_j$ and $x_{j+1}$ are equal. Swapping $y_j$ and $y_{j+1}$ in the factored expression for $g$ fixes the pairs of factors
\[ (y_i-y_j)^{r-p_j+p_i}(y_i-y_{j+1})^{r-p_{j+1}+p_i} \]
for $i<j$ and 
\[ (y_j-y_k)^{r-p_k+p_j}(y_{j+1}-y_k)^{r-p_k+p_{j+1}} \]
for $k>j+1$. The factors not including $y_j$ or $y_{j+1}$ are unaffected by the exchange, and the remaining factor $(y_j-y_{j+1})^r$ is multiplied by $(-1)^r$. This implies that $h$ is (anti-)symmetric in $y_j$ and $y_{j+1}$, so we have $q_j' \leq q_{j+1}'$. All the pairs of factors above contribute equally to the exponents on $y_j$ and $y_{j+1}$ in the trailing term of $g$. The remaining factor $(y_j-y_{j+1})^r$ has lex trailing term $y_{j+1}^r$, which implies that $q_{j+1}\geq q_j+r$ as desired.
\end{proof}

\subsection{An Alternate Description of $\Gamma_r$}\label{semigroupsection}

In Section \ref{01} we showed that $\Gamma_0$ and $\Gamma_1$ admit simple descriptions in terms of nondecreasing (resp. strictly increasing) $n$-tuples of points. In this section, we characterize the remaining sets $\Gamma_r$ for $r>1$ in terms of $\Gamma_1.$

\begin{proposition}\label{fg}
For $r>1$, $\Gamma_r$ is equal to the $r$-fold Minkowski sum $\Gamma_1+\cdots+\Gamma_1$.
\end{proposition}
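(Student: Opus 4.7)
The plan is to prove the two inclusions separately.

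The inclusion $\Gamma_1 + \cdots + \Gamma_1 \subseteq \Gamma_r$ is straightforward: given $\gamma^{(1)}, \ldots, \gamma^{(r)} \in \Gamma_1$ with pairs $(p_j^{(s)}, q_j^{(s)})$ strictly lex-increasing in $j$, I would verify that the coordinate-wise sum $\gamma$ satisfies each defining condition of $\Gamma_r$. The monotonicity $p_j \leq p_{j+1}$ is immediate, and if $p_j = p_{j+1}$ then $p_j^{(s)} = p_{j+1}^{(s)}$ for every $s$, forcing $q_{j+1}^{(s)} \geq q_j^{(s)} + 1$ and hence $q_{j+1} \geq q_j + r$. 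The inequality $q_j \geq (j-i)(r-p_j) + p_i + \cdots + p_{j-1}$ follows by summing the $r = 1$ version for each $\gamma^{(s)}$, which is itself a consequence of strict lex-increase (cf.\ Lemma \ref{alt01}).

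For the reverse inclusion, given $\gamma = (p_1, \ldots, p_n, q_1, \ldots, q_n) \in \Gamma_r$, I would construct an explicit decomposition $\gamma = \sum_{s=1}^r \gamma^{(s)}$. The first step is to partition each $p_j$ in a balanced way: writing $p_j = r a_j + b_j$ with $0 \leq b_j < r$, set $p_{s,j} = a_j + 1$ for $s \leq b_j$ and $p_{s,j} = a_j$ otherwise. A short case analysis shows this partition is monotone in $j$ for each fixed $s$, and yields the identity
\[
\bigl|\{s : p_{s,j'} = p_{s,j}\}\bigr| \;=\; \max\bigl(0,\, r - (p_j - p_{j'})\bigr)
\]
for every $j' < j$. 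Summing over $j' < j$ gives $\sum_{s=1}^r m_{s,j} = \sum_{j' < j}\max(0,\, r - (p_j - p_{j'}))$, where $m_{s,j} := |\{j' < j : p_{s,j'} = p_{s,j}\}|$ is the minimum value of $q_{s,j}$ forced by strict lex-increase in channel $s$. The right-hand side is precisely the $\Gamma_r$-floor on $q_j$ from the first form of Definition \ref{semigroup}, so the slack $E_j := q_j - \sum_s m_{s,j}$ is non-negative.

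The main obstacle is distributing the slacks $\{E_j\}$ consistently across the $r$ channels. Writing $q_{s,j} = m_{s,j} + e_{s,j}$ with $\sum_s e_{s,j} = E_j$ and $e_{s,j} \geq 0$, the strict-lex condition on each $\gamma^{(s)}$ translates to $e_{s,j+1} \geq e_{s,j}$ on every maximal run of constant $p_{s,\cdot}$. I would prove feasibility by a greedy column-by-column allocation, routing each $E_j$ preferentially through channels whose $p$-value strictly increases at step $j+1$ so that the $e$-constraint resets to zero there. When $p_j < p_{j+1}$ the balanced partition guarantees at least one such channel, and when $p_j = p_{j+1}$ the $\Gamma_r$ inequality $q_{j+1} \geq q_j + r$ forces $E_{j+1} \geq E_j$ so the excess can simply propagate. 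Verifying that this greedy procedure never gets stuck is a transportation-style feasibility check that reduces to condition (c) of Definition \ref{semigroup} applied at the critical indices; once it succeeds, each $\gamma^{(s)} := (p_{s,\cdot}, q_{s,\cdot})$ lies in $\Gamma_1$ by construction and the decomposition is complete.
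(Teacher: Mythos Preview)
Your argument for the inclusion $\Gamma_1+\cdots+\Gamma_1\subseteq\Gamma_r$ is correct and matches the paper's. For the reverse inclusion your route is genuinely different: the paper first reduces to the case $p_1=0$, $p_{j+1}\le p_j+r$, and $q_j$ equal to its $\Gamma_r$-floor (so there is no slack at all), and then builds the decomposition by induction on $n$; you instead fix the balanced partition of the $p_j$'s up front and try to allocate the $q$-slack across channels. Your partition is correct, and the identity $|\{s:p_{s,j'}=p_{s,j}\}|=\max(0,r-(p_j-p_{j'}))$ is both true and the right thing to isolate.

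The gap is in the allocation step. Your one-step greedy is under-specified precisely on plateaus $p_j=p_{j+1}$: there every channel continues, so ``route through channels that reset at $j{+}1$'' gives no guidance, and ``simply propagate'' does not say how the load was split in the first place. A bad split can genuinely get stuck. Take $r=2$, $n=4$, $p=(0,0,1,1)$, $q=(10,12,2,4)\in\Gamma_2$. The balanced partition gives channel~$1$ the sequence $0,0,1,1$ and channel~$2$ the sequence $0,0,0,0$, so $E=(10,10,0,0)$. At columns $1$ and $2$ both channels continue; if any of the excess is placed in channel~$2$ it must persist through column~$3$ (channel~$2$ never resets), contradicting $E_3=0$. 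Feasibility does hold here, but only with the specific choice $e_{2,\cdot}\equiv 0$, which your rule does not select. The assertion that feasibility ``reduces to condition~(c) at the critical indices'' is exactly the statement that needs proof, and it requires looking past the next step (e.g.\ to the end of the current plateau, or to the earliest reset in each channel). The paper's Reduction~2 handles the analogous issue one unit of slack at a time, explicitly locating a channel that resets immediately after the plateau before adding to it; you would need a comparable lemma to close the argument.
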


\begin{proof}
For the containment $\Gamma_1+\cdots +\Gamma_1\subseteq \Gamma_r$, we use the second description of $\Gamma_r$ given in Definition \ref{semigroup}. Let $(\mathbf{p}^{(1)},\mathbf{q}^{(1)}),\dots,(\mathbf{p}^{(r)},\mathbf{q}^{(r)})\in \Gamma_1$ with $(\mathbf{p}^{(\ell)},\mathbf{q}^{(\ell)}) = (p^{(\ell)}_1,\dots,p^{(\ell)}_n,q^{(\ell)}_1,\dots,q^{(\ell)}_n)$, and define 
\[ \mathbf{(p,q)} = (p_1,\dots,p_n,q_1,\dots,q_n) = (\mathbf{p}^{(1)},\mathbf{q}^{(1)})+\cdots+(\mathbf{p}^{(r)},\mathbf{q}^{(r)}).\]
The inequalities $p_1\leq \cdots \leq p_n$ and $q_j\geq (j-i)(r-p_j)+p_i+\cdots +p_{j-1} $ are immediate since these conditions are homogeneous in $r$. If $p_j=p_{j+1}$, then we also have $p_j^{(\ell)} = p_{j+1}^{(\ell)}$ for all $\ell$, and therefore 
\[ q_{j+1} = q_{j+1}^{(1)}+\cdots+q_{j+1}^{(r)} \geq (q_j^{(1)}+1)+\cdots+(q_j^{(r)}+1) = q_j+r, \]
as desired. This completes the first containment.\\

For the containment $\Gamma_r \subseteq \Gamma_1+\cdots +\Gamma_1$, let $\mathbf{(p,q)} = (p_1,\dots,p_n,q_1,\dots,q_n) \in \Gamma_r$. We aim to construct an $r$-tuple $(\mathbf{p}^{(1)},\mathbf{q}^{(1)}),\dots,(\mathbf{p}^{(r)},\mathbf{q}^{(r)})\in \Gamma_1$ whose sum is $\mathbf{(p,q)}$, but first we make some reductions. \\

Reduction 1: Suppose there is an index $j=1,\dots,n-1$ such that $p_{j+1}>p_j+r$. By the first description of $\Gamma_r$ in Definition \ref{semigroup}, we have
\[ (\mathbf{p',q'}) =  (p_1,\dots,p_j,p_{j+1}-1,\dots,p_n-1,q_1,\dots,q_n) \in \Gamma_r, \]
If we have $(\mathbf{p}^{(1)},\mathbf{q}^{(1)}),\dots,(\mathbf{p}^{(r)},\mathbf{q}^{(r)})\in \Gamma_1$ whose sum is $(\mathbf{p',q'})$ then we may take any of these, say $(\mathbf{p}^{(1)},\mathbf{q}^{(1)})$, and replace it by 
\[ (p^{(1)}_1,\dots,p^{(1)}_j,p^{(1)}_{j+1}+1,\dots,p^{(1)}_n+1,q^{(1)}_1,\dots,q^{(1)}_n)\in \Gamma_1. \] 
The new $r$-tuple sums to $(\mathbf{p,q})$, which shows that if $(\mathbf{p',q'})$ is in the $r$-fold Minkowski sum, then $(\mathbf{p,q})$ is as well. It therefore suffices to consider only those $(\mathbf{p,q})\in \Gamma_r$ such that $p_{j+1}\leq p_j+r$ for all $j=1,\dots,n-1$. By a similar argument we may reduce to the case $p_1=0$.\\

Reduction 2: Suppose there is an index $j$ such that \[ q_j> \sum_{\substack{i=1,\dots,j-1 \\ p_j-p_i<r}} (r-p_j+p_i).\] Let $k$ be the largest index such that $p_j=p_{j+1}=\cdots=p_k$. In this case, we subtract one from the $b_j,\dots,b_k$ coordinates of $(\mathbf{p,q})$, defining
\[ (\mathbf{p',q'}) =  (p_1,\dots,p_n,q_1,\dots,q_j-1,\dots,q_k-1,\dots,q_n). \]
One checks that $(\mathbf{p',q'})\in \Gamma_r$. Suppose we have $(\mathbf{p}^{(1)},\mathbf{q}^{(1)}),\dots,(\mathbf{p}^{(r)},\mathbf{q}^{(r)})\in \Gamma_1$ whose sum is $(\mathbf{p',q'})$. Then since $p_j=\cdots =p_k$, any index $\ell=1,\dots,r$ also has $p^{(\ell)}_j = \cdots = p^{(\ell)}_k.$ By the maximality of $k$, we either have $k=n$ or there is an index $\ell$ such that $p^{(\ell)}_k<p^{(\ell)}_{k+1}$. For such an $\ell$ (if $k=n$ then any choice of $\ell$ works), replace $(\mathbf{p}^{(\ell)},\mathbf{q}^{(\ell)})$ by
\[ (p^{(\ell)}_1,\dots,p^{(\ell)}_n,q^{(\ell)}_1,\dots,q^{(\ell)}_j+1,\dots,q^{(\ell)}_k+1,\dots,q^{(\ell)}_n). \]
Again, one checks that this new vector still lies in $\Gamma_1$, and that the resulting $r$-tuple now sums to $\mathbf{(p,q)}$. This shows that it suffices to consider $\mathbf{(p,q)}$ such that  \[ q_j= \sum_{\substack{i=1,\dots,j-1 \\ p_j-p_i<r}} (r-p_j+p_i)\] for all $j=1,\dots,n.$\\

The remaining cases are covered by the following lemma.
\end{proof}

\begin{lemma}
Let $\mathbf{(p,q)} = (p_1,\dots,p_n,q_1,\dots,q_n) \in \Gamma_r$ be such that 
\begin{itemize}
\item $p_1=0$,
\item $p_{j+1}\leq p_j+r$ for all $j=1,\dots,n-1$, and 
\item $q_j =  \sum_{\substack{i=1,\dots,j-1 \\ p_j-p_i<r}} (r-p_j+p_i)$ for all $j=1,\dots, n$.
\end{itemize}
There exist $(\mathbf{p}^{(1)},\mathbf{q}^{(1)}),\dots,(\mathbf{p}^{(r)},\mathbf{q}^{(r)})\in \Gamma_1$ whose sum is $\mathbf{(p,q)}$, and such that $q^{(k)}_n = \#\{ i=1,\dots,n-1 \hspace{1ex} | \hspace{1ex} p_n-p_i<r-k+1 \}$ for all $k=1,\dots,r$.
\end{lemma}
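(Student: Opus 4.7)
My plan is to prove the lemma by induction on the number of points $n$. The base case $n=1$ is immediate: the saturation condition (iii) reduces $q_1$ to an empty sum, so $q_1=0$, and setting $(\mathbf{p}^{(k)}, \mathbf{q}^{(k)}) = (0,0) \in \Gamma_1$ for each $k$ gives a valid decomposition whose last coordinate is $0 = \#\emptyset$, matching the prescribed formula.

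For the inductive step, I would first note that the truncation $(p_1,\dots,p_{n-1}, q_1,\dots,q_{n-1})$ still lies in $\Gamma_r$ and still satisfies all three hypotheses (i)--(iii) with the same parameter $r$, so a decomposition of the truncation exists by the inductive hypothesis. The task is then to append a new last pair $(p_n^{(k)}, q_n^{(k)})$ to each of the $r$ columns so that the prescribed $q_n^{(k)}$ holds, the $\Gamma_1$ strict-lex property is maintained in each column, and $\sum_k p_n^{(k)} = p_n$. Writing $\delta_n = p_n - p_{n-1}$, which satisfies $\delta_n \leq r$ by hypothesis (ii), strict lex forces $p_n^{(k)} > p_{n-1}^{(k)}$ for every $k$ with $q_n^{(k)} \leq q_{n-1}^{(k)}$. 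A direct calculation---using the prescribed formula for $q_n^{(k)}$, the inductive formula for $q_{n-1}^{(k)}$, and the saturation identity (iii)---identifies this set of $k$'s; after choosing the intermediate $(p^{(k)}_{n-1}, q^{(k)}_{n-1})$ suitably, its cardinality is exactly $\delta_n$, and setting $p_n^{(k)} = p_{n-1}^{(k)} + 1$ for those $k$ and $p_n^{(k)} = p_{n-1}^{(k)}$ otherwise yields the required total $p_n$.

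The main obstacle I anticipate is that applying the lemma verbatim at $n-1$ pins down $q^{(k)}_{n-1}$ by a formula depending on $n-1$, which in general is incompatible with the prescribed $q^{(k)}_n$ at $n$: in small examples (for instance $r=3$, $n=3$, $p=(0,1,2)$, $q=(0,2,3)$) the resulting values of $p^{(k)}_{n-1}$ can be so large that more than $\delta_n$ columns need a strict $p$-increment at step $n$, making the extension infeasible. To get around this, I would strengthen the inductive hypothesis so that it produces not a single decomposition but a \emph{family} of valid decompositions of the truncation, parameterized by the admissible last-pair targets $(p_{n-1}^{(k)}, q_{n-1}^{(k)})$, and verify by the same saturation bookkeeping and the bound $\delta_j \leq r$ that this stronger statement is preserved under induction. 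Once this flexibility is built in, choosing a compatible truncation decomposition and extending it as above gives the required $r$-tuple in $\Gamma_1$, and the sum identities and the prescribed formula for $q_n^{(k)}$ follow at once.
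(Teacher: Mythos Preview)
Your inductive skeleton matches the paper's proof, and you have correctly identified the central obstacle: the formula for $q_{n-1}^{(k)}$ supplied by the inductive hypothesis is not immediately compatible with the prescribed formula for $q_n^{(k)}$. But your proposed remedy---strengthening the induction to produce a parametrized family of decompositions---is more complicated than necessary and, as you have left it, not clearly well-posed.

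The paper's resolution is simpler: keep the lemma exactly as stated as the inductive hypothesis, and exploit the freedom to \emph{reorder} the $r$ summands (the conclusion concerns only their sum). With $\ell = p_n - p_{n-1} \in \{0,\dots,r\}$, extend the decomposition coming from the inductive hypothesis by setting
\[
(p_n^{(k)}, q_n^{(k)}) = \begin{cases}
(p_{n-1}^{(k)}+1,\,0) & k=1,\dots,\ell,\\
(p_{n-1}^{(k)},\, q_{n-1}^{(k)}+1) & k=\ell+1,\dots,r.
\end{cases}
\]
This visibly lands in $\Gamma_1$ and gives the correct sums $p_n$ and $q_n$. The resulting $q_n^{(k)}$ do not satisfy the prescribed formula in the original indexing, but after the cyclic shift sending $(\mathbf{p}^{(1)},\dots,\mathbf{p}^{(r)})$ to $(\mathbf{p}^{(\ell+1)},\dots,\mathbf{p}^{(r)},\mathbf{p}^{(1)},\dots,\mathbf{p}^{(\ell)})$, a two-line check using the inductive formula for $q_{n-1}^{(k)}$ shows that they do.

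Your own example shows why the ``family'' idea adds no content beyond reordering: for $r=3$ and the truncation $(p_1,p_2,q_1,q_2)=(0,1,0,2)$, the only decompositions into three elements of $\Gamma_1$ are the three $S_3$-permutations of a single tuple, so there is no genuine parameter to vary. The flexibility you need is precisely the freedom to permute, and the cyclic shift by $\ell$ is the explicit permutation that closes the induction.
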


The condition specifying the $q_n^{(k)}$ coordinates is not important to the result, but recording this extra information helps with the induction step. 

\begin{proof}
The proof is by induction on $n$. The case $n=1$ is trivial, as the conditions imply that $p_1= q_1=0$. In this case we take $(p^{(k)}_1,q^{(k)}_1) = (0,0)$ for all $k=1,\dots,n$.\\

Now let $\mathbf{(p,q)} = (p_1,\dots,p_n,q_1,\dots,q_n) \in \Gamma_r$ be as in the statement of the lemma. Apply the inductive hypothesis to $(p_1,\dots,p_{n-1},q_1,\dots,q_{n-1})\in\Gamma_r\subseteq \Z^{2n-2}$ to obtain $(p^{(k)}_1,\dots,p^{(k)}_{n-1},q^{(k)}_1,\dots,q^{(k)}_{n-1})\in \Gamma_1\subseteq \Z^{2n-2}$ for $k=1,\dots,r$. Define $\ell = p_n-p_{n-1}$, and note that $\ell \in \{0,1,\dots,r\}$ by assumption. We extend these tuples by defining
\[ (p^{(k)}_n,q^{(k)}_n) := \begin{cases}
(p_{n-1}^{(k)}+1,0) & \text{for }k=1,\dots,\ell, \text{ and} \\
(p_{n-1}^{(k)},q_{n-1}^{(k)}+1) & \text{for } k= \ell+1,\dots,r.
\end{cases} \]
It follows from the definition that $(p^{(k)}_n,q^{(k)}_n)>(p^{(k)}_{n-1},q^{(k)}_{n-1})$ in lex for all $k$, so by Lemma \ref{alt01} we have  $(p^{(k)}_1,\dots,p^{(k)}_{n},q^{(k)}_1,\dots,q^{(k)}_{n})\in \Gamma_1\subseteq \Z^{2n}$. It also follows from the construction that $p^{(1)}_n+\cdots +p^{(r)}_n = p^{(1)}_{n-1}+\cdots +p^{(r)}_{n-1}+\ell = p_{n-1}+\ell = p_n$. \\

The equality $q^{(1)}_n+\cdots +q^{(r)}_n = q_n$ is a consequence of the explicit formula for the $q_n^{(k)}$ terms. To meet this stronger condition, however, we must reorder the vectors as 
\[ (\mathbf{p}^{(\ell+1)},\mathbf{q}^{(\ell+1)}),\dots,(\mathbf{p}^{(r)},\mathbf{q}^{(r)}),(\mathbf{p}^{(1)},\mathbf{q}^{(1)}),\dots,(\mathbf{p}^{(\ell)},\mathbf{q}^{(\ell)}). \]
For $k=\ell+1,\dots,r$ corresponding to the indices $k-\ell = 1,\dots,r-\ell$ in the reordering above, we have
\begin{align*}
q^{(k)}_n = 1+ q_{n-1}^{(k)} & = 1+ \#\{ i=1,\dots,n-2 \hspace{1ex} | \hspace{1ex} p_{n-1}-p_i<r-k+1 \} \\
& =  \#\{ i=1,\dots,n-1 \hspace{1ex} | \hspace{1ex} p_n-p_i<r-(k-\ell)+1 \}.
\end{align*}
For $k=1,\dots,\ell$ corresponding to the indices $r-\ell+k = r-\ell+1,\dots,r$ in the reordering above, we have
\[ q^{(k)}_n = 0 = \#\{ i=1,\dots,n-1\hspace{1ex} | \hspace{1ex} p_n-p_i < r- (r-\ell+k) +1 \}. \]
These are precisely the required formulas for the $q_n^{(k)}$ coordinates for the reordered vectors, which completes the proof.
\end{proof}

\subsection{The Coordinate Ring and Graded Semigroup of $(\C^2)^{[n]}$}\label{CoordRingc2Section}

With the results from the previous sections, we can compute the sets of valuations obtained from $A^r$, $\overline{A}^r$, and $H^0((\C^2)^{[n]},\Oo(r))$.

\begin{proposition}\label{sg}
For all $r\geq 0$, we have 
\[ \Gamma_r = \{ \nu(f) \hspace{1ex}|\hspace{1ex}f\in A^r\setminus\{0\} \} =  \{ \nu(f) \hspace{1ex}|\hspace{1ex}f\in \overline{A}^r\setminus\{0\} \}. \]
\end{proposition}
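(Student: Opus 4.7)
The plan is to establish the circular chain of containments
\[ \Gamma_r \subseteq \{\nu(f) \mid f\in A^r\setminus\{0\}\} \subseteq \{\nu(f) \mid f\in \overline{A}^r\setminus\{0\}\} \subseteq \Gamma_r, \]
which forces all three sets to coincide. The middle inclusion is immediate from $A^r\subseteq \overline{A}^r$. The rightmost inclusion is already in hand: for $r=0,1$ it follows from Lemma \ref{bounds} (giving $A^r=\overline{A}^r$) together with the explicit descriptions of $\Gamma_0$ and $\Gamma_1$ in Lemma \ref{alt01}; for $r>1$ it is precisely Corollary \ref{gammaub}.

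The substantive step is the leftmost inclusion, $\Gamma_r\subseteq\{\nu(f)\mid f\in A^r\setminus\{0\}\}$. For $r=0,1$, the analysis in Section \ref{01} already produces polynomials with every prescribed valuation: the monomial symmetric polynomials $m_{(\mathbf{p,q})}$ realize $\Gamma_0$, and the determinants $d_{(\mathbf{p,q})}$ realize $\Gamma_1$. For $r>1$, I would invoke Proposition \ref{fg} to decompose an arbitrary $(\mathbf{p,q})\in \Gamma_r$ as a Minkowski sum $(\mathbf{p}^{(1)},\mathbf{q}^{(1)})+\cdots+(\mathbf{p}^{(r)},\mathbf{q}^{(r)})$ with each summand in $\Gamma_1$. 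Using the $r=1$ case just noted, pick alternating polynomials $f_k=d_{(\mathbf{p}^{(k)},\mathbf{q}^{(k)})}\in A^1$ with $\nu(f_k)=(\mathbf{p}^{(k)},\mathbf{q}^{(k)})$. Then by the very definition of $A^r$ as the linear span of $r$-fold products of alternating polynomials, the product $f_1\cdots f_r$ lies in $A^r$.

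What remains is to verify that $\nu(f_1\cdots f_r)=\nu(f_1)+\cdots+\nu(f_r)$, so that the product has the desired valuation $(\mathbf{p,q})$. This is the standard multiplicativity of a trailing term valuation on the polynomial ring $\C[\mathbf{x,y}]$: because the lexicographic order on monomials is total, the trailing monomial of a product of nonzero polynomials is uniquely realized as the product of the individual trailing monomials, and no cancellation occurs since we work over $\C$. I expect this last verification to be the only real obstacle in the write-up, since it is the one place that uses a property of $\nu$ beyond its appearance in the definitions; once it is in place, every other step is direct bookkeeping from Lemma \ref{alt01}, Corollary \ref{gammaub}, and Proposition \ref{fg}. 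As a bonus, the equality $\Gamma_r=\{\nu(f)\mid f\in \overline{A}^r\setminus\{0\}\}=\{\nu(f)\mid f\in A^r\setminus\{0\}\}$ forces $A^r=\overline{A}^r$ (since $\nu$ has one-dimensional leaves and these sets have matching graded dimensions), which is presumably how the integral closure statement Corollary \ref{int} alluded to earlier will be deduced.
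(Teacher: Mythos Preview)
Your proof is correct and essentially identical to the paper's: the same circular chain of inclusions, with the leftmost handled via Proposition~\ref{fg} and products of the determinants $d_{(\mathbf{p,q})}$, and the rightmost via Corollary~\ref{gammaub}.

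One caution on your closing ``bonus'' remark: equality of the valuation sets does \emph{not} by itself force $A^r=\overline{A}^r$. The paper gives the counterexample $\{(x+1)f(x)\mid f\in\C[x]\}\subsetneq\C[x]$ with the trailing term valuation, where the valuation sets agree but the inclusion is strict. The missing ingredient is that $\nu$ is compatible with the $\Z^2$-grading on $\C[\mathbf{x,y}]$ into \emph{finite-dimensional} pieces, so that one-dimensional leaves can be applied graded-piece by graded-piece; this is exactly the additional observation the paper uses in proving Corollary~\ref{int}. Your phrase ``matching graded dimensions'' gestures at this, but the compatibility of $\nu$ with the grading is the point that needs to be stated.
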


\begin{proof}
The cases $r=0,1$ are proved in Section \ref{01}, so assume $r>1$. We establish the following chain of inclusions
\[\Gamma_r\subseteq \{ \nu(f) \hspace{1ex}|\hspace{1ex}f\in A^r\setminus\{0\} \} \subseteq \{ \nu(f) \hspace{1ex}|\hspace{1ex}f\in \overline{A}^r\setminus\{0\} \} \subseteq \Gamma_r. \]
The middle inclusion follows from $A^r\subseteq \overline{A}^r$, and the final inclusion is precisely the statement of Corollary \ref{gammaub}. For the first inclusion, take $\vec{v}\in \Gamma_r$. By Proposition \ref{fg}, $\vec{v}$ is in the $r$-fold Minkowski sum $\Gamma_1+\cdots+\Gamma_1$. But as shown in Section \ref{01}, every vector in $\Gamma_1$ is the valuation of some determinant $d_{(\mathbf{p}_1,\mathbf{q}_1)}(\mathbf{x,y})$. This implies that $\vec{v}$ is attained as the valuation of some $r$-fold product of these determinants, and all such products are contained in $A^r$ by definition. This establishes the chain of inclusions, completing the proof.
\end{proof}

This result, along with Lemma \ref{bounds}, already implies that $\Gamma_r$ is precisely the set of valuations obtained from $H^0((\C^2)^{[n]}\Oo(r))$. However, it also affords a proof of the following result, which clarifies the situation greatly.

\begin{corollary}\label{int}
For all $r\geq0$, we have $A^r=\overline{A}^r$. In particular, the ring $S=A^0\oplus A^1\oplus A^2\oplus \cdots$ in integrally closed, and therefore
\[ H^0((\C^2)^{[n]},\Oo(r)) \simeq A^r \]
for all $r\geq 0$.
\end{corollary}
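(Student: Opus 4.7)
The plan is to refine Proposition \ref{sg} using the $\Z^2$-grading on $\C[\mathbf{x},\mathbf{y}]$ from Section \ref{algsection}. By Lemma \ref{bounds} we already have $A^r \subseteq H^0((\C^2)^{[n]}, \Oo(r)) \subseteq \overline{A}^r$, so it will suffice to prove $A^r = \overline{A}^r$ for every $r \geq 0$. Both $A^r$ and $\overline{A}^r$ are visibly $\Z^2$-graded subspaces of $\C[\mathbf{x}, \mathbf{y}]$, and in each $\Z^2$-degree $(P, Q)$ the graded pieces $A^r_{(P,Q)}$ and $\overline{A}^r_{(P,Q)}$ are finite dimensional.

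The key observation I would use is that the trailing-term valuation $\nu$ respects the $\Z^2$-grading: the trailing monomial of any nonzero polynomial lies in a single $\Z^2$-degree, which is recovered from $\nu(f) = (p_1, \ldots, p_n, q_1, \ldots, q_n)$ as $(\sum p_i, \sum q_i)$. Consequently, for any $\Z^2$-graded subspace $V \subseteq \C[\mathbf{x},\mathbf{y}]$, the set $\{\nu(f) \mid f \in V \setminus \{0\}\}$ decomposes as the disjoint union, indexed by $(P,Q)$, of the valuation sets of the graded pieces $V_{(P,Q)}$.

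Applying this to the inclusion $A^r \subseteq \overline{A}^r$ together with the equality of total valuation sets from Proposition \ref{sg}, I would conclude that for every $(P, Q)$ the valuation sets of $A^r_{(P,Q)}$ and $\overline{A}^r_{(P,Q)}$ coincide. Since $\nu$ has one-dimensional leaves, the number of distinct valuations of a finite-dimensional subspace equals its dimension, so $\dim A^r_{(P,Q)} = \dim \overline{A}^r_{(P,Q)}$, and the inclusion of these finite-dimensional spaces is forced to be an equality. Summing over all $(P, Q)$ yields $A^r = \overline{A}^r$.

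Once $A^r = \overline{A}^r$ for every $r$, we have $S = \overline{S}$ as graded rings, and $\overline{S}$ was shown to be integrally closed in the proof of Lemma \ref{bounds}. Since $H^0((\C^2)^{[n]}, \Oo(r))$ is the degree $r$ part of the integral closure of $S$, this integral closure is simply $S$ itself, and so $H^0((\C^2)^{[n]}, \Oo(r)) \simeq A^r$. The only subtlety I foresee is verifying that the trailing monomial of an arbitrary polynomial really lives in a single $\Z^2$-degree; this is immediate because the trailing monomial of a sum of $\Z^2$-homogeneous components is the trailing monomial of whichever component has the smallest trailing monomial in the lex order, and that monomial is $\Z^2$-homogeneous by construction.
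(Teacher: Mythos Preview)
Your proof is correct and follows essentially the same approach as the paper: both arguments use the $\Z^2$-grading to reduce to finite-dimensional graded pieces, then exploit Proposition \ref{sg} and the one-dimensional leaves property. The paper phrases the final step as an explicit reduction of $f\in\overline{A}^r$ modulo homogeneous elements of $A^r$ with matching valuation, whereas you package it as a dimension count on each graded piece; these are equivalent and your version is arguably cleaner.
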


This result may be known to experts, but we have been unable to find a reference. The equality $A^r=\overline{A}^r$ essentially comes from the fact that the sets of valuations from $A^r$ and $\overline{A}^r$ are the same. It is certainly possible, however, to have a strict inclusion of vector spaces $V\subset W$ and a valuation on $W$ such that every valuation from $W$ is obtained on $V$. For example take the subspace $\{ (x+1)f(x) \hspace{1ex}|\hspace{1ex} f\in \C[x] \}\subset \C[x]$ with the trailing term valuation. The key additional fact used in the proof of Corollary \ref{int} is the $\Z^2$-grading (defined at the end of Section \ref{algsection}) into finite-dimensional pieces in a way that is compatible with the valuation. 

\begin{proof}
The assertion $A^r= \overline{A}^r$ follows from the definitions for $r=0,1$, so we assume $r>1$. The containment $A^r\subseteq \overline{A}^r$ is clear, so for the reverse containment we fix $f\in \overline{A}^r$ a nonzero polynomial. Define $M$ to be the set of valuations $(\mathbf{p,q})\in \Gamma_r$ such that $f$ has a nonzero term in the $(p_1+\cdots+p_n,q_1+\cdots+q_n)$ graded piece. $M$ is finite because $f$ has terms from only finitely many graded pieces, and each graded piece has only finitely many possible valuations. It is also clear that $\nu(f)\in M$. \\

By Proposition \ref{sg} there is a polynomial $g\in A^r$ with $\nu(g) = \nu(f)$, and as in the proof of the proposition we may take $g$ to be an $r$-fold product of determinants $d_{(\mathbf{p},\mathbf{q})}(\mathbf{x,y})$. The critical observation is that $g$ is taken to be homogeneous with respect to the $\Z^2$-grading. There is a unique linear combination $f-\lambda g$ that cancels the common trailing terms, and we set $f'= f-\lambda g$. If $f'=0$ then we certainly have $f\in A^r$, so assume $f'\neq 0$. In this case we have $\nu(f')>\nu(f)$ in the lex order, but $\nu(f')$ is still in $M$ since $g$ was taken to be homogeneous. Repeat this process of reducing $f$ modulo $A^r$, in each step obtaining a polynomial with larger valuation in $M$. Since $M$ is finite this process terminates, giving an expression for $f$ as a linear combination of elements of $A^r$ as desired.
\end{proof}

\begin{remark}\label{fgfailure}
The bases constructed in Section \ref{01} for $A^0$ and $A^1$ provide all of the valuations of polynomials in $A^0$ and $A^1$, and we have now shown (by Propositions \ref{fg} and \ref{sg}) that the graded semigroup of $\Oo(1)$ is $\Gamma_0\oplus\Gamma_1\oplus\Gamma_2\oplus\cdots$, which is generated in degree one. For valuations coming from other term orders, the same bases still provide all the valuations from $A^0$ and $A^1$, as the basis elements don't share any common terms. However, for different choices of term order the graded semigroup of valuations can fail to be generated in degree one.
\end{remark}

One can also compute the sections and valuations of $\Oo(r)$ for negative $r$.

\begin{lemma}\label{neg}
For even $r<0$ we have $H^0((\C^2)^{[n]},\Oo(r))\simeq A^0$, and for odd $r<0$ we have $H^0((\C^2)^{[n]},\Oo(r))\simeq A^1$
\end{lemma}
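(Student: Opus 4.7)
The plan is to treat the two parities of $r$ separately, reducing the odd case to the even case via multiplication by a nonzero alternating polynomial.

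For $r<0$ even, write $r=-2s$ with $s>0$, so $\Oo(rE)=\Oo(sB)$ (since $B=-2E$ in $\Pic((\C^2)^{[n]})$). A global section is a rational function on $(\C^2)^{[n]}$ with pole order at most $s$ along $B$ and no other poles; in particular it is regular on $U=(\C^2)^{[n]}\setminus B$. Under the Hilbert-Chow morphism $\pi$, the open set $U$ is isomorphic to the complement $U_0\subset(\C^2)^{(n)}$ of the big diagonal, which has codimension two in the normal variety $(\C^2)^{(n)}=\Spec A^0$. Normality then forces the restriction to extend to a regular function $g\in A^0$ on all of $(\C^2)^{(n)}$, with $f=\pi^*g$; conversely, every $g\in A^0$ pulls back to a regular function on $(\C^2)^{[n]}$ and so to a section of $\Oo(sB)$. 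This yields $H^0(\Oo(rE))\simeq A^0$.

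For $r<0$ odd, I fix a nonzero $d\in A^1=H^0(\Oo(E))$ and consider the injective $\C$-linear map
\[ \cdot d\colon H^0(\Oo(rE))\hookrightarrow H^0(\Oo((r+1)E))=A^0, \]
where the target is identified with $A^0$ by the even case (as $r+1\le 0$ is even). The plan is to identify its image with the subspace $d\cdot A^1:=\{dh:h\in A^1\}\subseteq A^0$; since multiplication by the nonzero polynomial $d$ is a $\C$-linear bijection $A^1\to d\cdot A^1$ inside the domain $\C[\mathbf{x,y}]$, this gives $H^0(\Oo(rE))\simeq A^1$ as needed.

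The inclusion $d\cdot A^1\subseteq\operatorname{Im}(\cdot d)$ is constructive: the section-multiplication map $H^0(\Oo(E))\otimes H^0(\Oo((r-1)E))\to H^0(\Oo(rE))$ applied to $h\otimes 1$ (using $1\in A^0=H^0(\Oo((r-1)E))$ from the even case, as $r-1$ is also even) produces a section $\psi_h:=h\cdot 1$ satisfying $d\psi_h=dh$. For the reverse inclusion, given $\psi\in H^0(\Oo(rE))$ with $g:=d\psi\in A^0$, I choose an auxiliary $d'\in A^1$ coprime to $d$ in $\C[\mathbf{x,y}]$ --- e.g., taking $d$ and $d'$ to be Vandermondes in the $y$- and $x$-variables respectively, $\prod_{i<j}(y_i-y_j)$ and $\prod_{i<j}(x_i-x_j)$. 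Setting $g':=d'\psi\in A^0$ (again by the even case), the associativity of section multiplication forces the identity $d'g=d(d'\psi)=dg'$ as sections of $\Oo((r+2)E)$; computing both sides under the trivialization of $\Oo((r+2)E)$ by $d^{r+2}$ on the open $D_+(d)$ from Haiman's Proj description elevates this to the polynomial equality $d'g=dg'$ in $\C[\mathbf{x,y}]$. Thus $d\mid d'g$ in $\C[\mathbf{x,y}]$, and coprimality forces $d\mid g$; the quotient $g/d$ is then a polynomial, and being symmetric divided by alternating it is itself alternating, so $g/d\in A^1$. Hence $g\in d\cdot A^1$, completing the identification.

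The main technical step is the trivialization computation that promotes the sheaf-theoretic identity $d'g=dg'$ to a polynomial equation; once established, the remainder is classical divisibility in $\C[\mathbf{x,y}]$ together with the normality extension supplying the even case.
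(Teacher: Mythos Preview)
Your approach is genuinely different from the paper's. The paper argues uniformly for both parities by intersection theory: fixing $n-1$ general points yields a curve class $R$ with $R\cdot B=-2$, so any effective divisor in a positive half-integer multiple of $B$ must contain $B$; hence multiplication by the canonical section $s_B\in H^0(\Oo(-2))$ gives isomorphisms $H^0(\Oo(r+2))\xrightarrow{\sim} H^0(\Oo(r))$ for all $r<0$, and one composes back to $\Oo(0)$ or $\Oo(1)$. Your even case instead uses that the big diagonal has codimension two in the normal affine variety $\Spec A^0$, so Hartogs forces every section of $\Oo(sB)$ to be a pulled-back regular function. This is correct and arguably cleaner than the moving-curve argument, and it shows directly that the inclusion $A^0=H^0(\Oo)\hookrightarrow H^0(\Oo(sB))$ via $s_B^s$ is onto.

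The odd case is where your write-up has a soft spot. The identity $d'(d\psi)=d(d'\psi)$ certainly holds as sections of $\Oo(r+2)$, but the sentence ``computing both sides under the trivialization of $\Oo((r+2)E)$ by $d^{r+2}$ on $D_+(d)$ elevates this to the polynomial equality $d'g=dg'$ in $\C[\mathbf{x,y}]$'' does not do what you want: that trivialization lands you in $(S_d)_0\subset\C(\mathbf{x,y})$, not in $\C[\mathbf{x,y}]$, and you have not said how the even-case identification $H^0(\Oo(r+1))\simeq A^0$ looks in those coordinates. The fix is to make your own even case explicit: your Hartogs argument shows $H^0(\Oo(r+1))=s_B^{-(r+1)/2}\cdot A^0$, so write $d\psi=s_B^{-(r+1)/2}g$ and $d'\psi=s_B^{-(r+1)/2}g'$ with $g,g'\in A^0$; then $s_B^{-(r+1)/2}(d'g-dg')=0$ in $H^0(\Oo(r+2))$, and since multiplication by the nonzero section $s_B^{-(r+1)/2}$ is injective you get $d'g=dg'$ in $H^0(\Oo(1))=A^1\subset\C[\mathbf{x,y}]$. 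From there your coprimality and parity argument goes through, and cancelling $d$ gives $\psi=s_B^{-(r+1)/2}h$ with $h\in A^1$. (Equivalently: the case $r=-1$ is immediate since all identifications are identities, and then one inducts downward using the map $\cdot s_B$.) With this adjustment your proof is complete; as written, the trivialization step is the only place that needs tightening.
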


The main part of the proof is a geometric argument identical to that of Proposition 3.2 in \cite{BC}.

\begin{proof}
Fix $n-1$ general points $p_1,\dots,p_{n-1}$ in $\C^2$, and let $R$ be the curve in $(\C^2)^{[n]}$ consisting of subschemes whose multiplicity at each of the points $p_1,\dots,p_{n-2}$ is one, and whose multiplicity at $p_{n-1}$ is two. Let $D\subseteq (\C^2)^{[n]}$ be an effective divisor linearly equivalent to $kB$ for some half integer $k>0$. The intersection product $R\cdot B= -2$ so the curve $R$ cannot meet $D$ transversely, and thus $R\subseteq D$ set theoretically. But curves of class $R$ cover a dense subset of the divisor $B\subseteq(\C^2)^{[n]}$, so there is a set theoretic inclusion $B\subseteq D$. This implies that $D-B$ is effective. \\

Since we have an isomorphism $\Oo(1)\simeq \Oo(-\frac12B)$, there are maps
\[ H^0((\C^2)^{[n]},\Oo(r+2)\to H^0((\C^2)^{[n]},\Oo(r)) \]
for all integers $r$, given by multiplication by a section defining the divisor $B$. The argument above implies that these maps are isomorphisms for all $r<0$. The desired result is obtained by composing these isomorphisms, starting from the global sections of $\Oo$ or $\Oo(1)$.
\end{proof}

\subsection{The Newton-Okounkov body of $(\C^2)^{[n]}$}\label{noc2section}

For consistency of notation we define $A^r=A^0$ for even $r<0$, and $A^r=A^1$ for odd $r<0$, and define $\Gamma_r$ similarly for $r<0$. With these conventions, the results from the previous section can be summarized as saying that
\[ H^0((\C^2)^{[n]},\Oo(r)) \simeq A^r, \]
and
\[ \Gamma_r = \{ \nu(f) \hspace{1ex}|\hspace{1ex}f\in A^r\setminus\{0\} \} \]
for all $r\in \Z.$  Following the usual construction, we therefore define the Newton-Okounkov body of $\Oo_{(\C^2)^{[n]}}(r)$ to be 
\[ \Delta(\Oo(r)) = \text{closed convex hull}\left(\bigcup_{m\geq 1}\frac{1}{m} \cdot \Gamma_{rm}\right) \]
for all $r\in \Z$.

\begin{theorem}
\label{noc2} 
For $r\geq 0$ the Newton-Okounkov body $\Delta(\Oo(r))$ is the closed convex hull of $\Gamma_r\subseteq \R^{2n}$. The Newton-Okounkov body $\Delta(\Oo)\subseteq \R^{2n}$ is a simplicial cone, and for $r<0$ we have $\Delta(\Oo) = \Delta(\Oo(r))$.
These Newton-Okounkov bodies are defined by the inequalities
\[ \setstretch{1.5}
\Delta\left(\Oo(r)\right) = \left\{ \begin{varwidth}{10 in} 
$(a_1,\dots,a_n,$\\
 $b_1,\dots,b_n)\in \R^{2n}$
 \end{varwidth}  \hspace{1ex}\left|\hspace{1ex} 
 \begin{varwidth}{10 in} \setstretch{1.5}
$0\leq a_1\leq a_2\leq\cdots\leq a_n$, and\\
$b_j\geq (j-i)(r-a_j)+ a_i+\cdots +a_{j-1}$,\\
for all $1\leq i \leq j \leq n$
 \end{varwidth} 
 \right.
 \right\},
 \setstretch{1}
\]
for all $r\in \Z.$
\end{theorem}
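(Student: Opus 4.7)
The plan is to combine Propositions~\ref{sg} and~\ref{fg} with a perturbation-and-scaling argument that identifies $\Delta(\Oo(r))$ with the rational polyhedron $P_r$ in the statement. For $r\geq 1$, Proposition~\ref{fg} gives $\Gamma_{rm}=\Gamma_r+\cdots+\Gamma_r$ ($m$ summands), so every element of $\frac{1}{m}\Gamma_{rm}$ is an average of $m$ points of $\Gamma_r$ and lies in $\conv(\Gamma_r)$; the reverse containment $\Gamma_r\subseteq\bigcup_m\frac{1}{m}\Gamma_{rm}$ is trivial (take $m=1$), so on passing to closed convex hulls one gets $\Delta(\Oo(r))=\overline{\conv}(\Gamma_r)$. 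The inclusion $\overline{\conv}(\Gamma_r)\subseteq P_r$ is immediate from Definition~\ref{semigroup}, since $P_r$ is closed and convex.

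For the reverse inclusion $P_r\subseteq\Delta(\Oo(r))$, I would take a rational point $(a,b)\in P_r$ and perturb its $a$-coordinates to $a^{(\epsilon)}_i:=a_i+\epsilon i$ for small rational $\epsilon>0$. A direct calculation shows that under this substitution the right-hand side $(j-i)(r-a_j)+a_i+\cdots+a_{j-1}$ of each defining inequality of $P_r$ \emph{decreases} by $\epsilon(j-i)(j-i+1)/2\geq 0$, so $(a^{(\epsilon)},b)$ still lies in $P_r$ and now has strictly increasing $a$-coordinates. After clearing denominators by a suitably large integer $N$, the point $(Na^{(\epsilon)},Nb)$ is integral and lies in $\Gamma_{rN}$: the ``$p_j=p_{j+1}$'' clause is vacuous by strictness, and the $q_j$-inequalities are precisely $N$ times the perturbed $P_r$-inequalities. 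Thus $(a^{(\epsilon)},b)=\frac{1}{N}(Na^{(\epsilon)},Nb)\in\frac{1}{N}\Gamma_{rN}\subseteq\Delta(\Oo(r))$, and letting $\epsilon\to 0$, together with density of rationals in $P_r$, completes the $r\geq 1$ case.

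For $r=0$ the same argument (the strictness clause from Lemma~\ref{alt01} now reads $(p_j,q_j)\leq(p_{j+1},q_{j+1})$ in lex, which again becomes vacuous once the $p$'s are spread apart) identifies $\Delta(\Oo)$ with the cone $\{(a,b):0\leq a_1\leq\cdots\leq a_n,\ b_j\geq 0\}$; its $2n$ extreme rays $e_{b_j}$ for $j=1,\dots,n$ and the tail-sum vectors $e_{a_i}+e_{a_{i+1}}+\cdots+e_{a_n}$ for $i=1,\dots,n$ form a basis of $\R^{2n}$, so this cone is simplicial. For $r<0$, Lemma~\ref{neg} identifies each $H^0(\Oo(rm))$ with $A^0$ or $A^1$, hence the valuations of its sections form $\Gamma_0$ or $\Gamma_1$, both of which generate the same simplicial cone; a one-line check using $a_k\leq a_j$ for $k<j$ confirms that for $r\leq 0$ every inequality $b_j\geq(j-i)(r-a_j)+a_i+\cdots+a_{j-1}$ with $i<j$ is implied by $b_j\geq 0$, so the polyhedral description collapses to the simplicial cone. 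The main obstacle is the monotonicity computation verifying that the perturbation $a_i\mapsto a_i+\epsilon i$ preserves membership in $P_r$; once that sign is in hand, all three regimes $r\geq 1$, $r=0$, and $r<0$ reduce to the same scaling-to-integer-point maneuver.
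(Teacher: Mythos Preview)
Your argument is correct and follows essentially the same outline as the paper's proof: use Proposition~\ref{fg} to reduce $\Delta(\Oo(r))$ to the closed convex hull of $\Gamma_r$, then identify this with the explicit polyhedron $P_r$ by producing, for each rational point of $P_r$, a scaled integer point in some $\Gamma_{rN}$ for which the clause ``if $p_j=p_{j+1}$ then $q_{j+1}\geq q_j+r$'' is vacuous. The only difference is cosmetic: where you perturb via $a_i\mapsto a_i+\epsilon i$ and verify by the monotonicity computation that this preserves membership in $P_r$, the paper simply notes that every \emph{interior} lattice point of $m\cdot P_r$ already lies in $\Gamma_{rm}$ (since the extra clause only bites on the boundary faces $a_j=a_{j+1}$) and invokes density of $\tfrac{1}{m}\Z^{2n}$-points in the interior---so your perturbation step is doing the same job of moving off those faces, just more explicitly.
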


\begin{proof}

First suppose $r>0$. Proposition \ref{fg} implies that for any $m>1$, the set $\frac1m\Gamma_{rm} = \frac1m(\Gamma_r+\cdots+\Gamma_r)$ is already contained in the convex hull of $\Gamma_r$, and so the Newton-Okounkov body of $\Oo(r)$ is given by 
\begin{align*} \Delta(\Oo(r)) & = \text{closed convex hull}\left(\bigcup_{m\geq 1}\frac{1}{m} \cdot \Gamma_{rm}\right) \\
& = \text{closed convex hull}\left(\Gamma_r\right) 
\end{align*}
as claimed.\\

It follows from Lemma \ref{alt01} that $\Gamma_0=\Gamma_0+\Gamma_0$. The same argument therefore implies that $\Delta(\Oo)$ is the closed convex hull of $\Gamma_0$.\\

We temporarily use $\overline{\Delta}(\Oo(r))$ to denote the convex polyhedron in the statement of the theorem, before showing that it is equal to $\Delta(\Oo(r))$. \\

Suppose $r\geq 0$. Comparing the inequalities defining $\overline{\Delta}(\Oo(r))$ to Definition \ref{semigroup}, one sees that $\Gamma_r\subseteq \overline{\Delta}(\Oo(r))\cap \Z^{2n}$. Furthermore the only integer points of $\overline{\Delta}(\Oo(r))$ omitted from $\Gamma_r$ lie on the boundary of $\overline{\Delta}(\Oo(r))$, so we have
\[ \overline{\Delta}(\Oo(r))^\circ \cap \Z^{2n} \subseteq  \Gamma_r \subseteq \overline{\Delta}(\Oo(r))\cap \Z^{2n}. \]
Similarly for any $m>1$ we have
\[ \overline{\Delta}(\Oo(rm))^\circ \cap \Z^{2n} \subseteq  \Gamma_{rm} \subseteq \overline{\Delta}(\Oo(rm))\cap \Z^{2n}. \]
Since the inequalities defining $\overline{\Delta}(\Oo(r))$ are homogeneous in $r$, we may divide by $m$ to obtain
\[ \overline{\Delta}(\Oo(r))^\circ \cap \frac1m\Z^{2n} \subseteq  \frac1m\Gamma_{rm} \subseteq \overline{\Delta}(\Oo(r))\cap \frac1m\Z^{2n}. \]
This holds for all $m>1$, so $\Delta(\Oo(r))$ is a closed convex subset of $\overline{\Delta}(\Oo(r))$ that contains all of its interior rational points. We conclude that $\Delta(\Oo(r)) = \overline{\Delta}(\Oo(r))$ for all $r\geq 0$.\\

The same argument given after Definition \ref{semigroup} for $\Gamma_r$ shows that for all $r\in \Z$, $\overline{\Delta}(\Oo(r))$ has the alternate description,

\[\setstretch{1.5}
\overline{\Delta}(\Oo(r)) =  \left\{ \begin{varwidth}{10 in} 
$(a_1,\dots,a_n,$\\
 $b_1,\dots,b_n)\in \R^{2n}$
 \end{varwidth} \hspace{1ex}\left|\hspace{1ex} 
 \begin{varwidth}{10 in} \setstretch{1.5}
$0\leq a_1\leq a_2\leq\cdots\leq a_n$, and\\
$b_j \geq  \sum_{\substack{i=1,\dots,j-1 \\ a_j-a_i<r}} (r-a_j+a_i)$\\
for all $j$ 
 \end{varwidth} 
 \right.
 \right\}.
 \]
When $r\leq 0$, the condition $a_j-a_i<r$ never holds, as $a_i\leq a_j$ by the first inequalities. Therefore for $r\leq 0$, the second inequalities simply say that $b_1,\dots,b_n\geq 0$, and so $\overline{\Delta}(\Oo(r)) = \overline{\Delta}(\Oo) = \Delta(\Oo)$ is a simplicial cone.\\

It remains to check that $\Delta(\Oo(r)) = \Delta(\Oo)$ for $r<0$. For even $r<0$ the semigroup of $\Oo(r)$ is $\Gamma_0\oplus\Gamma_0\oplus\Gamma_0\oplus\cdots$, identical to that of $\Oo$. For odd $r<0$, the semigroup of $\Oo(r)$ is $\Gamma_0\oplus\Gamma_1\oplus\Gamma_0\oplus\Gamma_1\oplus\cdots$. One checks that in both cases the Newton-Okounkov body $\Delta(\Oo(r))$ is the same as $\Delta(\Oo)$, which completes the proof.
\end{proof}

The qualitative statements in Theorem \ref{noc2} are illustrated in the following figure, which is intended to represent a portion of the global Newton-Okounkov body of $(\C^2)^{[n]}$. In particular, there is homogeneity $\Delta(\Oo(r)) = r\Delta(\Oo(1))$ for integers $r>1$, and $\Delta(\Oo(r))$ degenerates to a simplicial cone for $r\leq 0$ .

\[\includegraphics[width = 8cm]{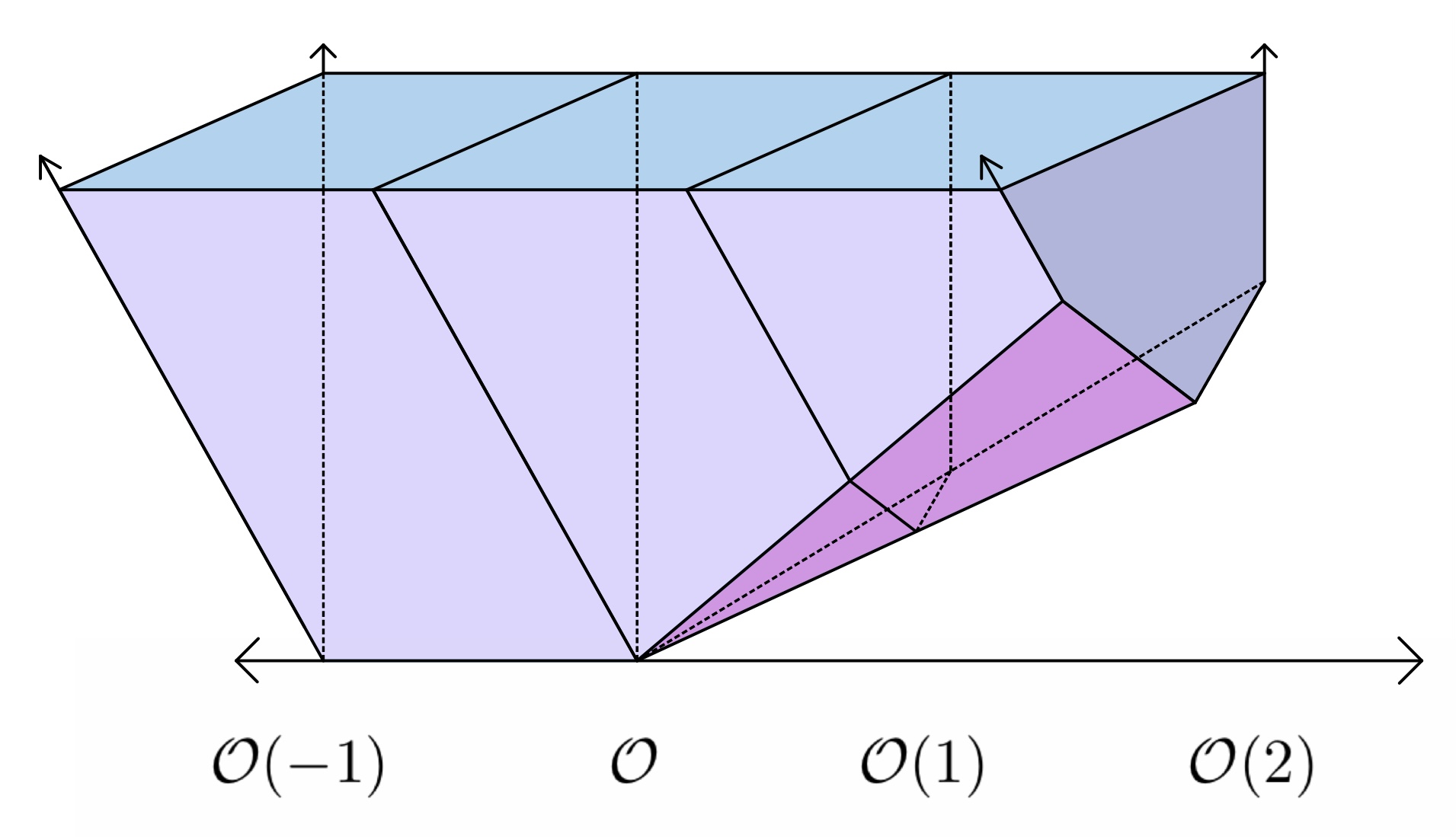}\]

The polyhedra $\Delta(\Oo(1))$ have interesting combinatorics. For example, we showed in the proof of Theorem \ref{noc2} that $\Delta(\Oo(1))$ has the alternate expression
\[\setstretch{1.5}
\Delta(\Oo(1)) =  \left\{ 
\begin{varwidth}{10 in} 
$(a_1,\dots,a_n,$\\
 $b_1,\dots,b_n)\in \R^{2n}$
 \end{varwidth}
\hspace{1ex}\left|\hspace{1ex} 
 \begin{varwidth}{10 in} \setstretch{1.5}
$0\leq a_1\leq a_2\leq\cdots\leq a_n$, and\\
$b_j \geq  \sum_{\substack{i=1,\dots,j-1 \\ a_j-a_i<1}} (1-a_j+a_i)$\\
for all $j$ 
 \end{varwidth} 
 \right.
 \right\}.
 \]
From this expression, one can show that a point $(a_1,\dots,b_n)\in \Delta(\Oo(1))$ lies on a bounded face of $\Delta(\Oo(1))$ precisely when 
\begin{itemize}
\item $a_1 =0$,
\item $a_{j+1}\leq a_j+1$ for all $j=1,\dots,n-1$, and
\item $b_j =  \sum_{\substack{i=1,\dots,j-1 \\ a_j-a_i<1}} (r-a_j+a_i)$ for all $j=1,\dots,n$.
\end{itemize}
The points on the bounded faces of $\Delta(\Oo(1))$ are therefore determined by the values $a_2,\dots,a_n$, where each $a_j$ ranges from $a_{j-1}$ to $a_{j-1}+1$. In other words, the points on the bounded faces of $\Delta(\Oo_{(\C^2)^{[n]}}(1))$ are parametrized by an $(n-1)$-cube, with coordinates given by $a_2-a_1,\dots,a_n-a_{n-1}.$ The combinatorics of these bounded faces corresponds to a polyhedral subdivision of the $(n-1)$-cube into regions depending on which of the pairs $1\leq i<j\leq n$ have $a_j-a_i<1$. The number of top-dimensional cells in this polyhedral subdivision of the $(n-1)$-cube, and therefore the number of top-dimensional bounded faces of $\Delta_{(\C^2)^{[n]}}(\Oo(1))$, is the Catalan number $C_{n-1}$.\\

\subsection{The Moment Polytope and Duistermaat-Heckman measure for $(\C^2)^{[n]}$}\label{DH}

In this section we show how the unbounded polyhedron $\Delta(\Oo(1))\subseteq \R^{2n}$ encodes asymptotic information about the sections of $\Oo(r)$ equivariantly. \\

The spaces $A^r\simeq H^0((\C^2)^{[n]},\Oo(r))$ decompose into graded pieces
\[ A^r = \bigoplus_{(p,q)\in \Z^2_{\geq 0}} A^r_{(p,q)} ,\]
with respect to the $\Z^2$-grading defined at the end of Section \ref{algsection}. The main observation of this section is that the valuation $\nu$ is compatible with this grading. In other words, each valuation $v\in \Gamma_r$ obtained from a polynomial $f\in A^r$ is also obtained from a homogeneous polynomial $f\in A^r_{(p,q)}$ for some $(p,q)\in \Z^2_{\geq 0}$, and the degree $(p,q)$ is uniquely determined by the vector $v\in \Gamma_r.$ This fact was already used in the proof of Corollary \ref{int}, and follows from the proof of Proposition \ref{sg}.

\begin{corollary}
For all $(p,q)\in \Z^2_{\geq0}$, the set of valuations $\nu(f)$ obtained by nonzero homogeneous polynomials $f\in A^r_{(p,q)}$ is equal to $\Gamma_r^{(p,q)}$, where
\[ \Gamma_r^{(p,q)}= \left\{ (p_1,\dots,p_n,q_1,\dots,q_n)\in \Gamma_r \hspace{1ex} \left| \hspace{1ex} 
 \begin{varwidth}{10 in} \setstretch{1.5}
$p_1+\cdots+p_n = p$\\
$q_1+\cdots+q_n = q$
 \end{varwidth} \right.  \right\} \]
The dimension of the $(p,q)$-graded piece of $H^0((\C^2)^{[n]},\Oo(r))$ is therefore given by $\#\Gamma_r^{(p,q)}$. \qed
\end{corollary}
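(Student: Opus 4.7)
The plan is to establish the equality of sets in two directions and then deduce the dimension statement from the one-dimensional leaves property of $\nu$.

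For the inclusion $\nu(A^r_{(p,q)} \setminus \{0\}) \subseteq \Gamma_r^{(p,q)}$, I would observe that any nonzero $f \in A^r_{(p,q)}$ lies in $A^r \subseteq \C[\mathbf{x,y}]$, so $\nu(f) \in \Gamma_r$ by Proposition \ref{sg}. Because $\nu(f)$ is the exponent vector of an actual monomial appearing in $f$, and every monomial of $f$ has $\Z^2$-degree $(p,q)$, it follows that the coordinates of $\nu(f) = (p_1,\dots,p_n,q_1,\dots,q_n)$ satisfy $\sum p_i = p$ and $\sum q_i = q$, so $\nu(f) \in \Gamma_r^{(p,q)}$.

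For the reverse inclusion, I would revisit the construction used in the proof of Proposition \ref{sg}. Given $v \in \Gamma_r^{(p,q)}$, Proposition \ref{fg} decomposes $v$ as a Minkowski sum $v = v^{(1)} + \cdots + v^{(r)}$ with each $v^{(k)} \in \Gamma_1$. By the explicit basis in Section \ref{01}, each $v^{(k)}$ equals $\nu(d_{(\mathbf{p}^{(k)}, \mathbf{q}^{(k)})}(\mathbf{x,y}))$ for some determinant, and the product $g = \prod_{k=1}^r d_{(\mathbf{p}^{(k)}, \mathbf{q}^{(k)})}(\mathbf{x,y})$ lies in $A^r$. The key observation is that each determinant $d_{(\mathbf{p}^{(k)}, \mathbf{q}^{(k)})}$ is itself $\Z^2$-homogeneous (of degree $(\sum_i p^{(k)}_i, \sum_i q^{(k)}_i)$, as noted at the end of Section \ref{01}), so $g$ is homogeneous of $\Z^2$-degree $(\sum_k \sum_i p^{(k)}_i, \sum_k \sum_i q^{(k)}_i) = (p,q)$. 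Since the trailing term of a product is the product of trailing terms, $\nu(g) = v$. Thus $g \in A^r_{(p,q)}$ realizes the desired valuation.

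For the dimension formula, I would apply the one-dimensional leaves property of the trailing-term valuation $\nu$ (recorded in Definition \ref{val} and the discussion after it). Since each graded piece $A^r_{(p,q)} \subseteq \C[\mathbf{x,y}]_{(p,q)}$ is finite-dimensional, the general fact recalled in Section~2.1 gives $\dim A^r_{(p,q)} = \#\{\nu(f) \mid 0 \neq f \in A^r_{(p,q)}\}$, which combined with the equality established above yields $\dim A^r_{(p,q)} = \#\Gamma_r^{(p,q)}$. The isomorphism $H^0((\C^2)^{[n]}, \Oo(r)) \simeq A^r$ from Corollary \ref{int} is compatible with the $\Z^2$-grading since both spaces inherit it from $\C[\mathbf{x,y}]$, so the statement transfers to the graded pieces of global sections.

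There is no real obstacle here: the statement packages together the compatibility of $\nu$ with the $\Z^2$-grading (which rests on the homogeneity of the determinant generators) with the standard one-dimensional-leaves dimension count. The only point requiring attention is verifying that the product of determinants used to realize an arbitrary element of $\Gamma_r$ can be taken homogeneous, and this is immediate from the construction.
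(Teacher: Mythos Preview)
Your proposal is correct and follows the same route the paper indicates: the corollary is marked with \qed\ and the paper simply notes beforehand that it ``follows from the proof of Proposition~\ref{sg},'' relying on exactly the homogeneity of the determinant generators $d_{(\mathbf{p},\mathbf{q})}$ that you invoke. One small point worth making explicit is that for $r=0,1$ you appeal directly to the homogeneous bases of Section~\ref{01} rather than to Proposition~\ref{fg} (which is stated only for $r>1$), but this is already implicit in your phrase ``revisit the construction used in the proof of Proposition~\ref{sg}.''
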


Following \cite{BP}, the dimensions of the graded pieces of $H^0((\C^2)^{[n]},\Oo(r))$ can be encoded as Dirac measures,

\[ \sum_{(p,q)\in \Z^2_{\geq0}}\#\Gamma_r^{(p,q)}\cdot\delta_{(p,q)}. \]

The Duistermaat-Heckman measure of $((\C^2)^{[n]},\Oo(1),T)$, which we denote by $\DHeck(n)$, is the weak limit of the rescaled Dirac measures, 
\[ \DHeck(n) := \lim_{r\to\infty} \sum_{(p,q)\in \Z^2_{\geq0}} \frac{\#\Gamma_r^{(p,q)}}{r^{2n}}\delta_{(p/r,q/r)}, \]
considered as a measure on $\R^2$. The measure $\DHeck(n)$ is equal to a piecewise-polynomial function times the usual Lebesgue measure on $\R^2$. The Newton-Okounkov body $\Delta(\Oo(1))$ encodes this measure as follows. Let $\pi:\R^{2n}\to \R^2$ denote the linear projection $(a_1,\dots,a_n,b_1,\dots,b_n)\mapsto (a_1+\cdots+a_n,b_1+\cdots+b_n)$.

\begin{corollary}
The Duistermaat-Heckman measure $\DHeck(n)$ is equal to the pushforward $\pi_*(d\mu|_{\Delta(\Oo(1))}),$ where $d\mu|_{\Delta(\Oo(1))}$ is the Lebesgue measure on $\R^{2n}$ restricted to $\Delta(\Oo(1)).$ 
\end{corollary}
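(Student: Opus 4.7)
The plan is to realize the Dirac-measure sums appearing in the definition of $\DHeck(n)$ as pushforwards under $\pi$ of a natural rescaled counting measure on $\R^{2n}$, and then show that this counting measure itself converges weakly to Lebesgue measure on $\Delta(\Oo(1))$. For each $r > 0$, set
\[ \mu_r := \frac{1}{r^{2n}} \sum_{v \in \Gamma_r} \delta_{v/r}, \]
a measure on $\R^{2n}$ supported in $\Delta(\Oo(r))/r = \Delta(\Oo(1))$. Since $\pi$ is linear and the corollary preceding the statement identifies $\Gamma_r^{(p,q)}$ as the fiber of $\pi$ over $(p,q)$ inside $\Gamma_r$, linearity of pushforward gives
\[ \pi_* \mu_r = \frac{1}{r^{2n}} \sum_{(p,q) \in \Z^2_{\geq 0}} \#\Gamma_r^{(p,q)} \cdot \delta_{(p/r,q/r)}, \]
so it suffices to prove $\pi_*\mu_r \to \pi_*(d\mu|_{\Delta(\Oo(1))})$ weakly as $r \to \infty$.

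The core ingredient is that $\mu_r$ converges weakly on compact sets to $d\mu|_{\Delta(\Oo(1))}$. This is a standard lattice-point-counting argument: Theorem \ref{noc2} furnishes both the homogeneity $\Delta(\Oo(r)) = r\Delta(\Oo(1))$ for $r>0$ and the sandwich
\[ \Delta(\Oo(r))^\circ \cap \Z^{2n} \subseteq \Gamma_r \subseteq \Delta(\Oo(r)) \cap \Z^{2n}, \]
so for any bounded Jordan-measurable $K \subseteq \R^{2n}$, the sets $\Gamma_r \cap rK$ and $\Z^{2n} \cap r(K \cap \Delta(\Oo(1)))$ differ by at most $O(r^{2n-1})$ boundary lattice points. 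Dividing by $r^{2n}$ and applying the usual Riemann-sum approximation then yields $\int f \, d\mu_r \to \int_{\Delta(\Oo(1))} f \, d\mu$ for every bounded continuous $f$ with compact support in $\R^{2n}$.

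The main obstacle, and essentially the only subtle point, is that $\Delta(\Oo(1))$ is unbounded, so to push the convergence forward via $\pi$ one must show that the fibers of $\pi$ over $\supp(\phi)$ are compact uniformly. Given $\phi \in C_c(\R^2)$ with $\supp(\phi) \subseteq [0,P] \times [0,Q]$, observe that any point of $\Delta(\Oo(1))$ satisfies $a_j \geq 0$ (first block of inequalities) and $b_j \geq 0$ (take $i = j$ in the second block), so the condition $\pi(v) \in [0,P]\times[0,Q]$ forces $v \in [0,P]^n \times [0,Q]^n$. Hence $\pi^{-1}(\supp\phi) \cap \Delta(\Oo(1))$ lies in a compact box $K$, and so does $\pi^{-1}(\supp\phi) \cap \supp(\mu_r)$ uniformly in $r$. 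Applying the weak convergence of $\mu_r$ on $K$ to the bounded continuous function $\phi \circ \pi$ then gives
\[ \int \phi \, d(\pi_*\mu_r) = \int_K (\phi \circ \pi) \, d\mu_r \longrightarrow \int_{\Delta(\Oo(1)) \cap K} (\phi \circ \pi) \, d\mu = \int \phi \, d(\pi_*(d\mu|_{\Delta(\Oo(1))})), \]
which is the desired weak convergence and completes the identification $\DHeck(n) = \pi_*(d\mu|_{\Delta(\Oo(1))})$.
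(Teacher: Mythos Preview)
Your argument is correct and follows essentially the same route as the paper: define the rescaled counting measure $\mu_r$, invoke the sandwich $\Delta(\Oo(r))^\circ\cap\Z^{2n}\subseteq\Gamma_r\subseteq\Delta(\Oo(r))\cap\Z^{2n}$ together with homogeneity to get $\mu_r\to d\mu|_{\Delta(\Oo(1))}$, and then push forward by $\pi$. The paper's proof is terser---it asserts the weak convergence and then simply says ``Pushing forward by $\pi$ we obtain the desired result''---whereas you explicitly handle the unboundedness of $\Delta(\Oo(1))$ by checking that $\pi^{-1}(\supp\phi)\cap\Delta(\Oo(1))$ is contained in a fixed box, which is a genuine technical point the paper leaves implicit.
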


Okounkov's original construction was in a similar context \cite{O2}, and results of this form hold much more generally (c.f. Theorem 1.7 of \cite{KK2}). Since we have explicit descriptions of the sets $\Gamma_r^{(p,q)}$ and the Newton-Okounkov body, the proof is straightforward, so we include it for completeness.

\begin{proof}
As in the proof of Theorem \ref{noc2}, we have
\[ \Delta(\Oo(r))^\circ \cap \Z^{2n} \subseteq \Gamma_r \subseteq \Delta(\Oo(r))\cap \Z^{2n}, \]
and by homogeneity 
\[ \Delta(\Oo(1))^\circ \cap \frac1r\Z^{2n} \subseteq \frac1r\Gamma_r \subseteq \Delta(\Oo(1))\cap \frac1r\Z^{2n}. \]
This implies the weak convergence,
\[ \sum_{v\in \Gamma_r} \frac{1}{r^{2n}}\delta_{v/r} = \sum_{v\in \frac1r \Gamma_r} \frac{1}{r^{2n}}\delta_{v} \xrightarrow{r\to\infty} d\mu|_{\Delta(\Oo(1))}. \]
Pushing forward by $\pi$ we obtain the desired result.
\end{proof}

This result is depicted in the following diagram. The point masses in the measure obtained from $\Oo(r)$ approximately count the number of $1/r$-integer points in the fibers of $\Delta(\Oo(1))$. In the limit $r\to\infty$, the density function is given by the volumes of the fibers.

\[\begin{tikzpicture}
\draw[thick,<->] (-1.5,-.2) -- (4,-.2);
\filldraw[black] (4,-.2) circle (0pt) node[anchor=west]{$\R^2$};
\draw [fill=blue, fill opacity=0.2,thick]
       (3,1) -- (0,1) -- (-0.5,1.5) -- (-0.5,2.2)--(1.5,2.7+1.5);
\draw [thick,->]
       (3,1) --(3.1,1);
\draw [thick,->]
       (1.5,2.7+1.5) -- (1.6,2.7+1.6);
\filldraw[black] (0.7,2.2) circle (0pt) node[anchor=west]{$\Delta(\mathcal{O}(1))$};
\draw [->,thick]
    (1.3,0.7) -- (1.3,0.1);
\filldraw[black] (1.3,0.4) circle (0pt) node[anchor=west]{$\pi$};
\filldraw[black] (0.41,-0.2) circle (1.5pt) node[anchor=north]{$(p,q)$};
\draw (0.41,-0.2) -- (0.41,3.1);
\filldraw[black] (0.41,1) circle (1.5pt);
\filldraw[black] (0.41,1.3) circle (1.5pt);
\filldraw[black] (0.41,1.6) circle (1.5pt);
\filldraw[black] (0.41,1.9) circle (1.5pt);
\filldraw[black] (0.41,2.2) circle (1.5pt);
\filldraw[black] (0.41,2.5) circle (1.5pt);
\filldraw[black] (0.41,2.8) circle (1.5pt);
\filldraw[fill = white] (0.41,3.1) circle (1.5pt);
\end{tikzpicture} \]

\section{The Hilbert Schemes of Points on Projective Toric Surfaces}\label{projsection}

Let $X$ be a smooth, projective, toric surface. As recalled in Section \ref{Hilbbg}, any divisor on the Hilbert scheme $X^{[n]}$ is linearly equivalent to $D_n+rE$ for some divisor $D\in \Pic(X)$ and $r\in \Z$. We identify the sections of $\Oo(D_n+rE)$ with a subset of the sections of $\Oo_{(\C^2)^{[n]}}(r)$ that satisfy a term restriction coming from the Newton polytope of $D$ (Proposition \ref{projsections}). This identification allows us to study the Newton-Okounkov body of $X^{[n]}$ using our results on $(\C^2)^{[n]}$.\\

The main difference is that for projective surfaces we only obtain upper bounds for the graded semigroups (Proposition \ref{projsgub}) and Newton-Okounkov bodies (Theorem \ref{projub}). The extra difficulty over the $\C^2$ case is that the semigroups in the projective case are not generated in degree one. \\

For most toric surfaces $X$, these upper bounds are not sharp. We conjecture, however, that for the Hilbert schemes of points on $\P^2,$ $\P^1\times \P^1$, and Hirzebruch surfaces, the convex bodies appearing in Theorem \ref{projub} are the exact Newton-Okounkov bodies (Conjecture \ref{sharp}). This conjecture is supported by calculations on these surfaces for small $n$. In fact, the inequalities appearing in Theorem \ref{projub} (and Theorem \ref{noc2}) were originally found based on explicit computations on $(\P^2)^{[n]}$ for $n=2,3,4$. To illustrate these techniques, we verify Conjecture \ref{sharp} for the Hilbert scheme of $4$ points on $\P^2$, computing the global Newton-Okounkov body $\Delta((\P^2)^{[4]})$.\\

In Section \ref{effsection} we apply these results to study the effective cone of $X^{[n]}$.

\subsection{Global Sections of Line Bundles on $X^{[n]}$}

Let $X$ be a smooth, projective, toric surface and $D$ a $T$-invariant divisor on $X$. In Section \ref{toric} we recalled the definition of the Newton polytope of $D$, and made the identification $P_D\subseteq \R^2$. The polygon $P_D$ controls the sections of $\Oo(D)$ via the formula
\[ H^0(X,\Oo(D)) \simeq \bigoplus_{(p,q)\in P_D\cap \Z^2} \C\cdot x^py^q. \]
The identification $P_D\subseteq \R^2$ depends on a choice of coordinates $\C^2\simeq U_\sigma\subseteq X$. Fixing such an isomorphism $\C^2 \simeq U_\sigma \hookrightarrow X$ yields open embeddings $(\C^2)^{[n]}\simeq U_\sigma^{[n]} \hookrightarrow X^{[n]}$ for all $n$, by which we can identify sections of line bundles on $X^{[n]}$ with their restrictions to $(\C^2)^{[n]}$. In Section \ref{CoordRingc2Section} we proved that 
\[ H^0((\C^2)^{[n]},\Oo(r))\simeq A^r \]
for all $r\in \Z$ (with $A^r$ defined as in Section \ref{noc2section} for $r<0$). The sections of $\Oo(D_n+rE) \in \Pic(X^{[n]})$ can be expressed as follows:

\begin{definition}\label{projsgdef}
For a divisor $D\in \Pic(X)$ on a smooth, projective, toric surface $X$ with Newton polygon $P_D\subseteq \R^2$ and $r\in \Z$, define
\[ A(D_n+rE) = A^r \cap \bigoplus_{(p_i,q_i)\in P_D\cap \Z^2}\C \cdot x_1^{p_1}y_1^{q_1}\cdots x_n^{p_n}y_n^{q_n}. \]
\end{definition}

\begin{proposition}\label{projsections}
Let $D\in \Pic(X)$ be a divisor on a smooth, projective, toric surface $X$ with Newton polygon $P_D\subseteq \R^2$. Then for any $r\in \Z$, we have
\[ H^0\left(X^{[n]},\Oo\left(D_n+rE\right)\right) \simeq A(D_n+rE). \]
\end{proposition}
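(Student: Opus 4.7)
The plan is to restrict sections to the dense open chart $(\C^2)^{[n]} \simeq U_\sigma^{[n]} \hookrightarrow X^{[n]}$ and identify the image inside $A^r$ as exactly $A(D_n + rE)$.

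First I would set up the restriction map
\[
\rho\colon H^0(X^{[n]}, \Oo(D_n + rE)) \longrightarrow H^0\bigl((\C^2)^{[n]}, \Oo(r)\bigr) \simeq A^r
\]
and verify it is $T$-equivariant and injective. Injectivity follows from the density of $(\C^2)^{[n]}$ in the irreducible variety $X^{[n]}$, while the identification of the codomain with $A^r$ is Corollary \ref{int} combined with Lemma \ref{neg} for $r<0$. The canonical trivialization $\Oo(D_n + rE)|_{(\C^2)^{[n]}} \simeq \Oo(r)$ is available because the chosen representative $D = \sum_{i \geq 3} d_i D_i$ satisfies $D|_{U_\sigma} = 0$, trivializing the $\Oo(D_n)$ factor, while $\Oo(rE)$ is tautologically $\Oo(r)$.

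For the containment $\operatorname{Im}(\rho) \subseteq A(D_n + rE)$, I would use $T$-equivariance. Each $T$-weight component of $A^r$ is spanned by monomials $x_1^{p_1} y_1^{q_1} \cdots x_n^{p_n} y_n^{q_n}$ of a common total bidegree. For such a $T$-weight section to extend regularly across the torus-invariant boundary divisor on $X^{[n]}$ associated to a ray $v_i$ of $\Sigma$, every individual exponent $(p_j, q_j)$ must satisfy the half-plane inequality $d_i + \langle(p_j, q_j), v_i\rangle \geq 0$ coming from that ray. Collecting these inequalities across all $s$ rays yields precisely $P_D$, forcing each monomial appearing to have every $(p_j, q_j) \in P_D \cap \Z^2$.

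For the reverse containment, I would exhibit explicit lifts. The cases $r = 0$ and $r = 1$ are standard: the Hilbert-Chow morphism gives $H^0(X^{[n]}, \Oo(D_n)) \simeq \Sym^n H^0(X, \Oo(D))$, realizing the monomial symmetric polynomials $m_{(\mathbf{p},\mathbf{q})}$ with $(p_i, q_i) \in P_D$; the identification $\Oo(D_n + E) \simeq \det \Oo(D)^{[n]}$ with the determinant of the tautological bundle gives $H^0(X^{[n]}, \Oo(D_n + E)) \simeq \Lambda^n H^0(X, \Oo(D))$, realizing the determinants $d_{(\mathbf{p},\mathbf{q})}$ with $(p_i, q_i) \in P_D$. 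The main obstacle is the case $r \geq 2$: since $\Oo(rE)$ has no global sections on projective $X^{[n]}$ for $n \geq 2$, one cannot simply tensor a section of $\Oo(D_n + E)$ by a section of $\Oo((r-1)E)$ to produce sections of $\Oo(D_n + rE)$. A workable approach is to exhibit the graded ring $\bigoplus_r A(D_n + rE)$ as the integral closure of the subring generated by its $r = 0, 1$ pieces inside the integrally closed ring $S$ from Corollary \ref{int}; equality with $\bigoplus_r H^0(X^{[n]}, \Oo(D_n + rE))$ then follows degree by degree, since the latter is also integrally closed and sandwiched between these two rings.
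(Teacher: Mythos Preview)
Your setup of the restriction map and the $r\in\{0,1\}$ and $r<0$ cases match the paper's. The genuine gap is your handling of $r\geq 2$. The object $\bigoplus_r A(D_n+rE)$, with $D$ fixed and $r$ varying, is not a graded ring: a product $A(D_n+rE)\cdot A(D_n+r'E)$ lands in $A(2D_n+(r+r')E)$, which is not a graded piece of your sum. So there is no ``subring generated by its $r=0,1$ pieces'' in the sense you need, and the integral closure maneuver cannot be set up as written. Even under a more charitable reinterpretation, the assertion that some specific integral closure equals $\bigoplus_r A(D_n+rE)$ is left entirely unjustified and is doing all of the work.

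The paper avoids integral closure here and argues via order of vanishing along $B$. Multiplication by the section cutting out $B$ gives injections $H^0(\Oo(D_n+(r+2)E))\hookrightarrow H^0(\Oo(D_n+rE))$, identifying $H^0(\Oo(D_n+rE))$ with the sections of $\Oo(D_n)$ or $\Oo(D_n+E)$ (according to parity) that vanish along $B$ to the required order. This vanishing is computed after restriction to $(\C^2)^{[n]}$, where Corollary~\ref{int} says the filtration of $A^0$ (resp.\ $A^1$) by order of vanishing along $B$ is exactly $A^0\supseteq A^2\supseteq\cdots$ (resp.\ $A^1\supseteq A^3\supseteq\cdots$). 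Intersecting with the known $r=0,1$ identifications yields $A(D_n+\epsilon E)\cap A^r = A(D_n+rE)$ directly, giving both containments at once. As a side remark, your separate forward-containment sketch via $T$-equivariance is also imprecise: the two-dimensional torus only sees the total bidegree $(\sum p_j,\sum q_j)$, so constraining the \emph{individual} exponents $(p_j,q_j)$ requires invoking the $S_n$-symmetry as well; but with the paper's approach this separate argument is not needed.
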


As with the corresponding result for $\C^2$ (Corollary \ref{int}), this description may be known to experts, but we have been unable to locate a reference.

\begin{proof}
For $r=0,1$, these isomorphisms are the well-known identifications 
\[ H^0\left(X^{[n]},\Oo(D_n)\right)\simeq \Sym^n H^0(X,\Oo(D)),\] 
and 
\[H^0\left(X^{[n]},\Oo\left(D_n+E\right)\right)\simeq \bigwedge^n H^0(X,\Oo(D)).\]

See, for example, the proof of Lemma 5.1 in \cite{EGL}.\\

Since $E=-\frac12B$, there is a section $s\in H^0(X^{[n]},\Oo(-2E))$ defining the divisor $B$. Multiplication by $s$ defines maps
\[ H^0\left(X^{[n]},\Oo\left(D_n+(r+2)E\right)\right) \to H^0\left(X^{[n]},\Oo\left(D_n+ rE\right)\right) \]
for all integers $r$. The same argument as in the proof of Lemma \ref{neg} (and \cite{BC} Proposition 3.2) shows that these maps are isomorphisms for all $r<0$. Composing these isomorphisms from either $\Oo(D_n)$ or $\Oo(D_n+E)$ proves the claim in the case $r<0$, by the definition of $A^r$ for $r<0$. \\

For $r>1$, we can again compose these embeddings to identify the global sections of $\Oo(D_n+rE)$ with the subspace of sections of $\Oo(D_n)$ or $\Oo(D_n+E)$ that vanish along $B$ to the appropriate order. To compute this order of vanishing, we may first restrict the section to $(\C^2)^{[n]}$, where the section is identified with a polynomial in $A^0$ or $A^1$, then compute the order of vanishing along $B|_{(\C^2)^{[n]}}$. Corollary \ref{int} can be interpretted as saying that the filtrations of $A^0$ and $A^1$ according to order of vanishing of the corresponding sections along $B$ are given by
\[ A^0\supseteq A^2 \supseteq A^4 \supseteq \cdots, \]
and
\[ A^1\supseteq A^3 \supseteq A^5 \supseteq \cdots. \]
This, along with the known cases $r=0,1$, establishes the identification for $r>1$.
\end{proof}

This result, combined with Corollary \ref{int}, implies a pleasant identification of the sections of $\Oo(D_n+rE)$ for $r\geq 0$ with the set of polynomials in $\C[\mathbf{x,y}] = \C[x_1,\dots,x_n,y_1,\dots,y_n]$ that:
\begin{enumerate}
    \item Are symmetric (when $r$ is even) or alternating (when $r$ is odd),
    \item Are contained in the ideal $J^r = \bigcap_{i<j}(x_i-x_j,y_i-y_j)^r$, and
    \item Have Newton polytope contained in $P_D$, when considered as a polynomial in any one of the pairs of variables $(x_i,y_i)$.
\end{enumerate}

One can translate constructions of divisors on $X^{[n]}$ into the language of these (anti-)symmetric polynomials. The following examples are enlightening, although we do not need them in this paper.

\begin{example}
Let $C\subseteq X$ be an irreducible curve representing the divisor class $D$. Restricting to the affine open $\C^2\simeq U_\sigma \subseteq X$, $C$ is defined as the vanishing locus $f(x,y)=0$ for some polynomial $f$ whose Newton polygon is contained in $P_D$. The divisor $D_n$ is represented by the locus of length $n$ subschemes of $X$ whose support meets the curve $C$. This representative corresponds to the polynomial \[ \prod_{i=1}^n f(x_i,y_i)\in A(D_n) \]
\end{example}

\begin{example}
Fix a divisor $D$ on $X$, and let $f_1,\dots,f_n$ be linearly independent sections of $\Oo(D)$. The span of $f_1,\dots,f_n$ corresponds to a linear system of curves in $X$, and the divisor $D_n+E$ is represented the locus of length $n$ subschemes $Z\subseteq X$ such that there exists a curve $C_Z$ in this linear system that contains $Z$ as a closed subscheme. This representative corresponds to the polynomial
\[ \det(f_i(x_j,y_j))_{ij} \in  A(D_n+E). \]
\end{example}

\begin{example}\label{vb}
Let $F$ be a vector bundle on $X$ of rank $r$ with  $s_1,\dots,s_{rn}$ general sections of $F$. The restricted sections $s_i|_{\C^2}$ can be represented as $r$-tuples of polynomials, 
\[ s_i|_{\C^2} = (f_{i,1} \cdots f_{i,r})^T. \]
Consider the polynomial $d(\mathbf{x,y})$ defined by
\[ \det\begin{pmatrix}
f_{1,1}(x_1,y_1) & \cdots & f_{1,r}(x_1,y_1) &\cdots & f_{1,1}(x_n,y_n) & \cdots & f_{1,r}(x_n,y_n) \\
\vdots \\
f_{rn,1}(x_1,y_1) & \cdots & f_{rn,r}(x_1,y_1) &\cdots & f_{rn,1}(x_n,y_n) & \cdots & f_{rn,r}(x_n,y_n) \\
\end{pmatrix}. \]
If the polynomial $d$ is not identically zero, then $F$ is said to satisfy interpolation for $n$ points, and $d$ corresponds to a divisor of class $c_1(F)_n+rE$. In this case one can check directly that $d\in A(c_1(F)_n+rE)$. Indeed, exchanging any pair of variables $(x_i,y_i)$ with $(x_j,y_j)$ in the matrix above swaps $r$ columns, so $d$ is multiplied by a factor of $(-1)^r.$ We also have
\[ d \in \bigcap_{i<j}(x_i-x_j,y_i-y_j)^r, \]
since setting $x_i=x_j$ and $y_i=y_j$ makes $r$ pairs of columns repeat in the matrix above. Finally, in each pair of variables $(x_i,y_i)$, $d$ is expressed as a linear combination of determinants
\[ d' =  \det\begin{pmatrix}
f'_{1,1}(x_i,y_i) & \cdots & f'_{1,r}(x_i,y_i)\\
\vdots \\
f'_{r,1}(x_i,y_i) & \cdots & f'_{r,r}(x_i,y_i)
\end{pmatrix} \]
where the vectors $(f'_{i,1} \cdots f'_{i,r})^T$ for $i=1,\dots, r$ represent $r$ general sections of $F$. Each $d'$ represents a general section of $\bigwedge^r F$, and therefore has Newton polytope contained in $P_{c_1(\bigwedge^r F)} = P_{c_1(F)}$. This establishes the Newton polytope restriction for $d$ as well, which shows that $d\in A(c_1(F)_n+rE)$ as desired.
\end{example}

\subsection{The Graded Semigroup of $X^{[n]}$}

Let $D$ be a $T$-invariant divisor on a smooth, projective, toric variety $X$ with Newton polygon \[ P_D = \left\{ (a,b)\in \R^2 \hspace{1ex}\left|\hspace{1ex} 
 \begin{varwidth}{10 in} \setstretch{1.5}
$0\leq a \leq c$, and\\
$\ell(a) \leq b \leq u(a)$
 \end{varwidth} 
 \right.
 \right\}\]
for some constant $c$, and piecewise linear functions $\ell$ and $u$. By analogy with the sets of valuations $\Gamma_r$ from $A^r$, we define a candidate set of valuations from $A(D_n+rE)$.
\begin{definition}
With $D$ and $P_D$ as above, and $r\geq0$, let $\Gamma(D_n+rE)$ be
\[ \setstretch{1.5}
\left\{ \begin{varwidth}{10 in} \setstretch{1.5}
$(p_1,\dots,p_n,$\\
 $q_1,\dots,q_n)\in \Z^{2n}_{\geq 0}$
 \end{varwidth}  \hspace{1ex}\left|\hspace{1ex} 
 \begin{varwidth}{10 in} \setstretch{1.5}
$0 \leq p_1\leq p_2\leq\cdots\leq p_n\leq c$,\\
if $p_{j} = p_{j+1}$ then $q_{j+1}\geq q_j+r$, \\
$q_j\geq \ell(p_j) +(j-i)(r-p_j) + p_i+\cdots +p_{j-1}$,\\
$q_j\leq u(p_j) - (k-j)(r+p_j) + p_{j+1}+\cdots+p_k$,\\
for all $1\leq i \leq j \leq k\leq n$
 \end{varwidth} 
 \right.
 \right\}.
 \setstretch{1}
\]
For even $r<0$ define $\Gamma(D_n+rE) = \Gamma(D_n)$, and for odd $r<0$ define $\Gamma(D_n+rE) = \Gamma(D_n+E)$.
\end{definition}

\begin{proposition}\label{projsgub}
For all $r\in \Z$ we have 
\[ \Gamma(D_n+rE) \supseteq \{ \nu(f) \hspace{1ex}|\hspace{1ex}f\in A(D_n+rE)\setminus\{0\} \}, \]
and for $r\leq 1$ we have
\[ \Gamma(D_n+rE) = \{ \nu(f) \hspace{1ex}|\hspace{1ex}f\in A(D_n+rE)\setminus\{0\} \}. \]
\end{proposition}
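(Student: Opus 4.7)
The plan is to handle the inclusion $\Gamma(D_n+rE) \supseteq \{\nu(f)\}$ as a direct consequence of Proposition \ref{np}, upgraded with the extra term restriction built into $A(D_n+rE)$, and then dispatch the reverse inclusion for $r\leq 1$ using the explicit monomial/determinant bases of $A^0$ and $A^1$ from Section \ref{01}. Negative $r$ is reduced to the $r=0,1$ cases via Proposition \ref{projsections}.

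For the forward inclusion, fix a nonzero $f\in A(D_n+rE)$ with $\nu(f)=(p_1,\dots,p_n,q_1,\dots,q_n)$. Since $A(D_n+rE)\subseteq A^r\subseteq \overline{A}^r$, Corollary \ref{gammaub} already gives $p_1\leq\cdots\leq p_n$, the ``$p_j=p_{j+1}\Rightarrow q_{j+1}\geq q_j+r$'' condition, and the lower bounds $q_j\geq (j-i)(r-p_j)+p_i+\cdots+p_{j-1}$. It remains to produce the polygonal constraints. Here the key point is that, by Definition \ref{projsgdef}, every monomial of $f$ has the property that its exponent pair on $(x_j,y_j)$ lies in $P_D$, so the \emph{same} is true of every vertex of the Newton polytope of $f$ projected to the $(a_j,b_j)$-plane. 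Proposition \ref{np} supplies two explicit points in this projection with first coordinate $p_j$ and second coordinates
\[
q_j - \!\!\!\sum_{\substack{i<j\\ p_j-p_i<r}}(r-p_j+p_i) \quad\text{and}\quad q_j + \!\!\!\sum_{\substack{k>j\\ p_k-p_j<r}}(r-p_k+p_j).
\]
Requiring that both of these points satisfy $\ell(p_j)\leq b\leq u(p_j)$ and $0\leq p_j\leq c$, and then rewriting each of the two sums as a maximum over $i$ (resp.\ $k$) exactly as in the paragraph following Definition \ref{semigroup}, yields precisely the remaining inequalities defining $\Gamma(D_n+rE)$.

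For the reverse inclusion when $r=0$, recall from Section \ref{01} that the monomial symmetric polynomials $m_{(\mathbf{p,q})}(\mathbf{x,y})$ form a basis of $A^0$ with $\nu(m_{(\mathbf{p,q})})=(p_1,\dots,p_n,q_1,\dots,q_n)$ (where pairs are listed in nondecreasing lex order). Such a basis element lies in $A(D_n)$ exactly when each $(p_i,q_i)\in P_D$. Specialising the inequalities defining $\Gamma(D_n)$ to $r=0$, the lower bound reduces to $q_j\geq \ell(p_j)+\sum_{i\leq i'<j}(p_{i'}-p_j)$; since $p_{i'}\leq p_j$, the sum is nonpositive, so the strongest case is $i=j$, giving $q_j\geq \ell(p_j)$. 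The upper bound similarly collapses to $q_j\leq u(p_j)$. Both follow from $(p_j,q_j)\in P_D$, so every integer point of $\Gamma(D_n)$ arises as $\nu$ of some basis element of $A(D_n)$. The case $r=1$ is identical, with $m_{(\mathbf{p,q})}$ replaced by the determinants $d_{(\mathbf{p,q})}$ and the lex inequalities strict. Finally, for $r<0$, Proposition \ref{projsections} identifies $A(D_n+rE)$ with $A(D_n)$ or $A(D_n+E)$ via multiplication by a section defining $B$, which does not change valuations in $\Z^{2n}$, and $\Gamma(D_n+rE)$ is defined to agree with $\Gamma(D_n)$ or $\Gamma(D_n+E)$ accordingly.

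The main obstacle is bookkeeping: translating the ``summation over indices $i$ with $p_j-p_i<r$'' form coming from Proposition \ref{np} into the ``for all $1\leq i\leq j\leq k\leq n$'' form appearing in the definition of $\Gamma(D_n+rE)$, and checking these equivalences uniformly in $r$ for both the $\ell$ and $u$ sides. Once that dictionary is in place, no new ideas beyond Proposition \ref{np} and Section \ref{01} are needed.
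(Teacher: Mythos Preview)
Your proposal is correct and follows essentially the same approach as the paper: Proposition \ref{np} plus the Newton polytope restriction for the forward inclusion, and the explicit bases $m_{(\mathbf{p,q})}$, $d_{(\mathbf{p,q})}$ from Section \ref{01} for the reverse inclusion at $r=0,1$.

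One small correction on the $r<0$ case: your phrase ``via multiplication by a section defining $B$, which does not change valuations in $\Z^{2n}$'' is not quite right---multiplying by a nonconstant polynomial certainly shifts the valuation. What actually happens is simpler and purely definitional: $A^r$ for $r<0$ is \emph{defined} to equal $A^0$ or $A^1$ (see the start of Section \ref{noc2section}), so $A(D_n+rE)$ is literally the same subspace of $\C[\mathbf{x,y}]$ as $A(D_n)$ or $A(D_n+E)$, and likewise $\Gamma(D_n+rE)$ is defined to coincide with $\Gamma(D_n)$ or $\Gamma(D_n+E)$. The equality then follows immediately from the $r=0,1$ cases, with no multiplication map involved. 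This is how the paper handles it as well.
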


\begin{proof}
Suppose $r\geq 0 $, and fix a nonzero polynomial $f\in A(D_n+rE)$ with $\nu(f) = (p_1,\dots,p_n,q_1,\dots,q_n)$. Since $f\in A^r$ we have $0 \leq p_1\leq p_2\leq\cdots\leq p_n$. By the definition of $A(D_n+rE)$, the Newton polytope of $f$ is contained in $(P_D)^n$, so Proposition \ref{np} implies that $P_D$ contains the points 
\[ \left( p_j, q_j -  \sum_{\substack{i=1,\dots,j-1 \\ p_j-p_i<r}} (r-p_j+p_i)\right) \text{ and } \left( p_j, q_j + \sum_{\substack{k = j+1,\dots,n \\ p_k-p_j<r}} (r-p_k+p_j)\right) \]
for all $j=1,\dots,n$. By the definition of $P_D$, this is equivalent to $p_j\leq c$ and
\[ \ell(p_j)\leq q_j -  \sum_{\substack{i=1,\dots,j-1 \\ p_j-p_i<r}} (r-p_j+p_i) \leq q_j \leq q_j + \sum_{\substack{k = j+1,\dots,n \\ p_k-p_j<r}} (r-p_k+p_j) \leq u(p_j) \]
for all $j=1,\dots,n$. The same argument given for $\Gamma_r$ after Definition \ref{semigroup} shows that the inequalities above are equivalent to the final inequalities in the definition of $\Gamma_r(D).$ That $p_{j} = p_{j+1}$ implies $q_{j+1}\geq q_j+r$ follows from Proposition \ref{np}, which completes the proof that $\nu(f)\in \Gamma_r$.\\

By the same argument as in the proof of Lemma \ref{alt01}, $\Gamma(D_n)$ and $\Gamma(D_n+E)$ can be described as
\[ \Gamma(D_n) = \left\{ \begin{varwidth}{10 in} \setstretch{1.5}
$(p_1,\dots,p_n,$\\
 $q_1,\dots,q_n)\in \Z^{2n}$
 \end{varwidth}  \hspace{1ex}\left|\hspace{1ex}
 \begin{varwidth}{10 in} \setstretch{1.5}
$(p_1,q_1)\leq \cdots \leq (p_n,q_n)$\\
in lex, and
$(p_j,q_j)\in P_D$ for all $j$
 \end{varwidth} 
 \right.
 \right\} , \]
and 
\[ \Gamma(D_n+E) = \left\{ \begin{varwidth}{10 in} \setstretch{1.5}
$(p_1,\dots,p_n,$\\
 $q_1,\dots,q_n)\in \Z^{2n}$
 \end{varwidth} \hspace{1ex}\left|\hspace{1ex}
 \begin{varwidth}{10 in} \setstretch{1.5}
$(p_1,q_1)< \cdots < (p_n,q_n)$\\
in lex, and
$(p_j,q_j)\in P_D$ for all $j$
 \end{varwidth} 
 \right.
 \right\} . \]
These are precisely the sets of valuations obtained by the bases $m_{(\mathbf{p,q})}(\mathbf{x,y})$ and $d_{(\mathbf{p,q})}(\mathbf{x,y})$ of Section \ref{01} with $(p_j,q_j)\in P_D$ for all $j$. This implies the equality $\Gamma(D_n+rE) = \{ \nu(f) \hspace{1ex}|\hspace{1ex}f\in A(D_n+rE)\setminus\{0\}$ for $r=0,1$. By the definitions of $\Gamma(D_n+rE)$ and $A(D_n+rE)$ for $r<0$, we obtain the same equality for all $r<0$ as well.
\end{proof}

\subsection{The Newton-Okounkov Body of $X^{[n]}$}

By Proposition \ref{projsections}, the Newton-Okounkov body of $D_n+rE\in \Pic(X^{[n]})$ can be defined as
\[ \Delta(D_n+rE) = \text{closed convex hull}\left(\bigcup_{m>0}\frac{1}{m} \cdot \left\{ \nu(f) \hspace{1ex}\big|\hspace{1ex} f\in A(mD_n+mrE)\setminus\{0\} \right\}\right). \]

\begin{theorem}\label{projub}
For all $D_n+rE\in \Pic(X^{[n]})$, the Newton-Okounkov body $\Delta\left(D_n+rE\right)$ is contained in the set $\overline{\Delta}(D_n+rE)$, which is defined by the inequalities
\[ \setstretch{1.5}
\left\{ \begin{varwidth}{10 in} 
$(a_1,\dots,a_n,$\\
 $b_1,\dots,b_n)\in \R^{2n}$
 \end{varwidth} \hspace{1ex}\left|\hspace{1ex} 
 \begin{varwidth}{10 in} \setstretch{1.5}
$0\leq a_1\leq a_2\leq\cdots\leq a_n \leq c$, and\\
$b_j\geq \ell(a_j) +(j-i)(r-a_j) + a_i+\cdots +a_{j-1}$,\\
$b_j\leq u(a_j) - (k-j)(r+a_j) + a_{j+1}+\cdots+a_k$,\\
for all $1\leq i \leq j \leq k\leq n$
 \end{varwidth} 
 \right.
 \right\}.
 \setstretch{1}
\]
For $r\leq 0$, we have $\overline{\Delta}(D_n+rE) = \Delta(D_n+rE) = \Delta(D_n)$, and the simpler description,
\[ \Delta(D_n) = \setstretch{1.5}
\left\{ (a_1,\dots,a_n,b_1,\dots,b_n)\in \R^{2n} \hspace{1ex}\left|\hspace{1ex} 
 \begin{varwidth}{10 in} \setstretch{1.5}
$a_1\leq a_2\leq\cdots\leq a_n $, and\\
$(a_j,b_j)\in P_D$ for all $j = 1,\dots,n$
 \end{varwidth} 
 \right.
 \right\}.
 \setstretch{1}
\]
\end{theorem}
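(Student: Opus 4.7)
Both parts follow from Propositions \ref{projsections} and \ref{projsgub}, closely paralleling the $(\C^2)^{[n]}$ case (Theorem \ref{noc2}).

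For the general upper bound, fix $r \in \Z$ and $m \geq 1$. Combining Propositions \ref{projsections} and \ref{projsgub} applied at scale $m$ gives
\[ \{\nu(f) \mid 0 \neq f \in H^0(X^{[n]}, \Oo(m(D_n+rE)))\} \subseteq \Gamma(mD_n + mrE). \]
The Newton polygon of $mD$ is $mP_D$, with lower and upper functions $a \mapsto m\ell(a/m)$ and $a \mapsto mu(a/m)$. Substituting these into the inequalities defining $\Gamma$ and rescaling via $a_j = p_j/m$, $b_j = q_j/m$ yields precisely the inequalities defining $\overline{\Delta}(D_n + rE)$, together with the additional constraint ``if $a_j = a_{j+1}$ then $b_{j+1} \geq b_j + r$''. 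Since this extra condition only tightens the set, one obtains $\tfrac{1}{m}\Gamma(mD_n+mrE) \subseteq \overline{\Delta}(D_n+rE)$. As $\overline{\Delta}(D_n+rE)$ is closed and convex, taking the union over $m>0$ followed by the closed convex hull yields the desired inclusion $\Delta(D_n+rE) \subseteq \overline{\Delta}(D_n+rE)$.

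For the simpler description when $r \leq 0$, I would first check that the inequalities defining $\overline{\Delta}(D_n + rE)$ reduce to the stated form. With $r \leq 0$ and $a_l \leq a_j$ for $l \leq j$, each summand $(j-i)(r - a_j) + (a_l - a_j)$ is nonpositive, so the tightest lower bound occurs at $i = j$, giving $b_j \geq \ell(a_j)$; symmetrically, the tightest upper bound occurs at $k = j$, giving $b_j \leq u(a_j)$. Hence $\overline{\Delta}(D_n + rE) = \{a_1 \leq \cdots \leq a_n, (a_j, b_j) \in P_D\}$ for all $r \leq 0$, depending only on $P_D$. To show this set equals $\Delta(D_n + rE)$, I would apply Proposition \ref{projsections}: the sections of $\Oo(m(D_n+rE))$ are identified with $A(mD_n)$ (if $mr$ is even) or $A(mD_n + E)$ (if $mr$ is odd). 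The equality statement of Proposition \ref{projsgub} for $r \leq 1$ then describes the corresponding valuations as lex-weakly (respectively lex-strictly) ordered $n$-tuples of lattice points in $mP_D$. Scaling by $1/m$ and taking the closed convex hull, both variants converge to the same set, which is exactly the stated description of $\Delta(D_n)$.

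The chief obstacle is the bookkeeping in the scaling step: one must carefully track how $P_D$ becomes $mP_D$ through the piecewise linear functions $\ell$ and $u$. A secondary subtlety appears in the $r < 0$, $r$ odd case, where the identification of sections alternates between $A(mD_n)$ and $A(mD_n + E)$ as $m$ varies; one must verify that this alternation does not disturb the limiting convex body, which is the same phenomenon already handled in the proof of Theorem \ref{noc2}, where strict and weak lex orderings produce the same closed convex hull after rescaling.
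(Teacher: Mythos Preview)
Your proposal is correct and follows essentially the same route as the paper: the upper bound comes from Proposition~\ref{projsgub} together with the homogeneity $\frac{1}{m}\overline{\Delta}(mD_n+mrE)=\overline{\Delta}(D_n+rE)$, and the $r\leq 0$ equality comes from the equality case of Proposition~\ref{projsgub} plus a density-of-rational-points argument. The only cosmetic differences are that the paper reduces the $r<0$ case to even $r$ via homogeneity of Newton--Okounkov bodies (thereby avoiding the alternating-parity bookkeeping you describe), and it phrases the reverse inclusion for $\Delta(D_n)$ explicitly as ``$\frac{1}{m}\Gamma(mD_n)$ contains all interior $\frac{1}{m}$-integer points of $\overline{\Delta}(D_n)$'' rather than as a convergence statement.
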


\begin{proof}
It follows from the definitions that for all $D$ and $r$, we have $\Gamma(D_n+rE)\subseteq \overline\Delta(D_n+rE)$. By the homogeneity of the Newton polygons $P_{mD}= mP_D$, the inequalities defining $\overline{\Delta}(D_n+rE)$ are homogeneous in the input, so for any $m>0$ we have
\[\frac1m\Gamma(mD_n+mrE)\subseteq \frac1m\overline{\Delta}\left(mD_n+mrE\right) = \overline{\Delta}\left(D_n+rE\right) \]

By Proposition \ref{projsgub}, $\Gamma(mD_n+mrE)$ contains all the valuation vectors from polynomials in $A(mD_n+mrE)$, so we have
\[ \Delta(D_n+rE) \subseteq \text{closed convex hull} \left( \bigcup_{m>0} \frac 1m \Gamma(mD_n+mrE) \right) \subseteq \overline{\Delta}(D_n+rE). \]

To establish the equality $\overline{\Delta}(D_n) = \Delta(D_n)$ in the case $r=0$, we note that for any $m>0$, $\frac1m\Gamma(mD_n)$ contains all of the interior $\frac1m$-integer points of $\overline{\Delta}(D_n)$. By Proposition \ref{projsections}, $\Gamma(mD_n)$ is precisely the set of valuations of polynomials in $A(mD_n)$. This implies that the Newton-Okounkov body $\Delta(D_n)$ contains all the interior rational points of $\overline{\Delta}(D_n)$, which establishes the remaining inclusion $\Delta(D_n)\supseteq \overline{\Delta}(D_n).$\\

Finally, we check the case $r<0$. By the homogeneity of Newton-Okounkov bodies, we may assume $r<0$ is even. In this case, we have by definition $A(mD_n+mrE) = A(mD_n)$ for all $m>1$, so $\Delta(D_n+rE) = \Delta(D_n) = \overline{\Delta}(D_n).$ It remains to check that $\overline{\Delta}(D_n+rE)= \overline{\Delta}(D_n).$ This can be seen from the alternate expression for $\overline{\Delta}(D_n+rE),$ which holds for all $r$,
\[ \setstretch{1.5}
\left\{ (a_1,\dots,a_n,b_1,\dots,b_n)\in \R^{2n} \hspace{1ex}\left|\hspace{1ex} 
 \begin{varwidth}{10 in} \setstretch{1.5}
$0\leq a_1\leq a_2\leq\cdots\leq a_n \leq c$, and\\
$b_j\geq \ell(a_j) + \sum_{\substack{i=1,\dots,j-1 \\ a_j-a_i<r}} (r-a_j+a_i)$,\\
$b_j\leq u(a_j) - \sum_{\substack{k = j+1,\dots,n \\ a_k-a_j<r}} (r-a_k+a_j)$,\\
for all $j=1,\dots,n$.
 \end{varwidth} 
 \right.
 \right\}.
 \setstretch{1}
\]
This alternate description can be established by the same argument given for $\Gamma_r$ after Definition \ref{semigroup}. With this description one can see that whenever $r\leq 0$, the conditions $a_j-a_i<r$ and $a_k-a_j<r$ never hold, since $i<j<k$ implies $a_i\leq a_j\leq a_k$. Thus for all $r\leq 0$ these final conditions reduce to the inequalities $\ell(a_j)\leq b_j\leq u(a_j)$ for all $j=1,\dots,n.$ This establishes the alternate description of $\Delta(D_n)$ in the statement of the theorem, and shows that $\overline{\Delta}(D_n) = \overline{\Delta}(D_n+rE)$ whenever $r<0$, completing the proof.
\end{proof}

\begin{remark}
By a similar argument given in the case $r=0$ above, one can show that $\overline{\Delta}(D_n+rE)$ is equal to
\[ \text{closed convex hull}\left(\bigcup_{m>0}\frac{1}{m} \cdot \Gamma(mD_n+mrE)\right) \]
for all divisors $D_n+rE\in \Pic(X^{[n]})$. 
\end{remark}

In the case $r=0$, the divisors $D_n\in \Pic(X^{[n]})$ are obtained by pulling back from $X^{(n)}$ via the Hilbert-Chow morphism. The convex sets $\Delta(D_n)$ can therefore be interpreted as Newton-Okounkov bodies on $X^{(n)}$. By the previous theorem, $\Delta(D_n)$ is identified with the set of $n$-tuples $(a_1,b_1),\dots,(a_n,b_n)\in P_D$ such that $a_1\leq \cdots\leq a_n.$ From this description one sees that the Euclidean volume of $\Delta(D_n)\subseteq \R^{2n}$ is given by
\[ \vol_{\R^{2n}}(D_n) = \frac{1}{n!}(\vol_{\R^2}(P_D))^n. \]
By Theorem \ref{vol} on volumes of Newton-Okounkov bodies, this gives
\[ \frac{\vol_{X^{[n]}}(D_n)}{(2n)!} = \frac{1}{n!} \left( \frac{\vol_X(D)}{2!} \right)^n. \]
The Newton-Okounkov body $\Delta(D_n)$ therefore gives a convex geometric interpretation for this known formula for $\vol(D_n)$. This formula can also be proved directly from the identity
\[ h^0(X^{[n]},\Oo(D_n)) = {h^0(X,\Oo(D))+n-1 \choose n}. \]

For $r\leq 0$, the Newton-Okounkov bodies $\Delta(D_n+rE)$ are constant equal to $\Delta(D_n)$. This leaves the case $r>0$, where we have only obtained an upper bound on the Newton-Okounkov body. Effective divisors in this remaining region are difficult to describe, so it is unsurprising that these Newton-Okounkov bodies are more difficult to compute. This is discussed further in the remaining sections.

\subsection{Examples and Conjectures}\label{examples}

We continue to use $\overline\Delta(D_n+rE)$ to denote the convex body appearing in Theorem \ref{projub}. The containment 
\[ \Delta\left(D_n+rE\right)\subseteq \overline\Delta\left(D_n+rE\right) \]
of Theorem \ref{projub} is strict for most toric surfaces $X$. However, we propose:

\begin{conjecture}\label{sharp}
If the surface $X$ is $\P^2, \P^1\times \P^1$, or a Hirzebruch surface, then $\overline\Delta(D_n+rE) = \Delta(D_n+rE)$  for all divisors $D_n+rE\in \Pic(X^{[n]})$.
\end{conjecture}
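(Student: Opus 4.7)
My plan is to upgrade the inclusion of Theorem \ref{projub} to an equality by combining a Minkowski-decomposition argument, modeled on the $\C^2$ case of Proposition \ref{fg}, with the special geometry of the Newton polygons of $\P^2$, $\P^1\times\P^1$, and Hirzebruch surfaces. By the remark following Theorem \ref{projub}, $\overline{\Delta}(D_n+rE)$ is already the closed convex hull of $\bigcup_{m>0}\frac{1}{m}\Gamma(mD_n+mrE)$, and by Proposition \ref{projsgub} the upper bound $\Gamma(D_n+rE)$ is attained as a set of valuations whenever $r\leq 1$, on any smooth projective toric surface. The content of the conjecture therefore lies entirely in the range $r\geq 2$, where the plan is to show that every lattice point in $\Gamma(mD_n+mrE)$ is the valuation $\nu(f)$ of an explicit polynomial $f\in A(mD_n+mrE)$ for $m$ sufficiently large.

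First I would try to prove an analogue of Proposition \ref{fg}, namely that for the three families above one has a Minkowski decomposition
\[ \Gamma(D_n+rE)=\Gamma(D^{(1)}_n+E)+\cdots+\Gamma(D^{(r)}_n+E) \]
for some effective splitting $D=D^{(1)}+\cdots+D^{(r)}$ into torus-invariant divisors. On $\P^2$ with $D=dH$ and $d\geq r$ one takes $r$ copies of $H$ together with a residual $(d-r)H$ absorbed into one summand; on $\P^1\times\P^1$ one splits the two hyperplane classes independently; on a Hirzebruch surface one splits the fiber and section classes separately. Each lattice point of a summand $\Gamma(D^{(k)}_n+E)$ is realized by a determinant $d_{(\mathbf{p}^{(k)},\mathbf{q}^{(k)})}(\mathbf{x,y})$ as in Section \ref{01}, and by Definition \ref{projsgdef} the product of these $r$ determinants lies in $A(D_n+rE)$ with the prescribed valuation. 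Granted such a decomposition, Theorem \ref{projub} immediately becomes an equality, and a continuity argument using the global Newton-Okounkov body of $X^{[n]}$ glues the pieces together across all classes $D_n+rE\in \Pic(X^{[n]})$.

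The main obstacle is the combinatorial Minkowski decomposition itself. The reductions in the proof of Proposition \ref{fg} adjust coordinates while preserving only the lower inequalities $q_j\geq (j-i)(r-p_j)+p_i+\cdots+p_{j-1}$; here one must simultaneously respect the upper inequalities $q_j\leq u(p_j)-(k-j)(r+p_j)+p_{j+1}+\cdots+p_k$ that encode the boundedness of $P_D$. For a general toric surface, $P_D$ need not admit an integral Minkowski decomposition compatible with these two-sided constraints, which is presumably why the conjecture is restricted to the three named families: triangles, rectangles, and trapezoids are precisely the Newton polygons that split cleanly into translates of lower-degree copies of themselves. I expect the inductive proof to proceed by peeling off $(p_n,q_n)$ one coordinate at a time, as in the auxiliary lemma at the end of Section \ref{semigroupsection}, with separate cases for when the active constraint on $q_n$ comes from the upper or the lower bound. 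For Hirzebruch surfaces the piecewise-linear nature of $u$ forces an extra case at its break point, where the two pieces of the trapezoid must be re-glued consistently across the decomposition, and I anticipate this to be the most delicate part of the combinatorial analysis.
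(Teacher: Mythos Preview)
This statement is a \emph{conjecture} in the paper, not a theorem; the paper does not prove it. What the paper supplies is evidence: the Example in Section~\ref{examples} verifies the case $(\P^2)^{[4]}$ by computing the top self-intersection $\int_{(\P^2)^{[4]}}(4H+E)^8$ via equivariant localization and checking that it matches $(8!)\cdot\vol_{\R^8}\overline\Delta(4H+E)$; equality of volumes forces $\Delta=\overline\Delta$ at that one ample class, and the remaining divisors are handled by the slicing theorem of \cite{LM} together with super-additivity. Beyond $n=4$ the paper reports only analogous small-$n$ checks and numerical agreement on effective-cone boundaries. There is no general argument to compare your proposal against.

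Your proposal is therefore an attempt to resolve an open problem, not a reconstruction of an existing proof. The strategy is the natural one and parallels Proposition~\ref{fg}: realize every lattice point of $\Gamma(D_n+rE)$ as a sum of points in $\Gamma(D^{(k)}_n+E)$, then multiply the corresponding determinants $d_{(\mathbf{p}^{(k)},\mathbf{q}^{(k)})}$. The step ``the product lies in $A(D_n+rE)$'' is fine, since $P_{D^{(1)}}+\cdots+P_{D^{(r)}}=P_D$ for nef splittings on these surfaces. The genuine gap is exactly the one you flag: the two-sided Minkowski decomposition $\Gamma(D_n+rE)=\sum_k\Gamma(D^{(k)}_n+E)$ is not established, and the reductions from the proof of Proposition~\ref{fg} do not carry over, because they freely increase the $p$- and $q$-coordinates in ways the upper constraints $q_j\le u(p_j)-(k-j)(r+p_j)+p_{j+1}+\cdots+p_k$ forbid. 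The paper itself warns that ``the semigroups in the projective case are not generated in degree one,'' which is not a direct contradiction to your scheme (your summands come from \emph{different} divisors $D^{(k)}$) but does indicate that no routine adaptation of the $\C^2$ argument is available. As written, your proposal is a reasonable research plan whose hardest lemma remains open; if you can close the inductive case analysis you describe, you will have proved something the paper explicitly leaves unresolved.
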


For $\P^2$ and $\P^1\times \P^1$, the choice of coordinates does not matter. For Hirzebruch surfaces however, one must choose coordinates so that the Newton polygons are oriented as in the pictures at the top of the table on the final page. This asymmetry apparently comes from our choice of valuation.

\begin{example}
We check that Conjecture \ref{sharp} holds for $(\P^2)^{[4]}$. Denote the image of the class of a line in $\Pic((\P^2)^{[4]})$ by $H$. In this case, the Newton polygon of $dH$ is the right triangle
\[ P_{dH} = \left\{ (a,b)\in \R^2 \hspace{1ex}\left|\hspace{1ex} 
 \begin{varwidth}{10 in} \setstretch{1.5}
$a\geq 0$, and\\
$0 \leq b \leq d-a$
 \end{varwidth} 
 \right.
 \right\}, \]
so the Newton-Okounkov body $\Delta(dH+rE)$ is contained in the convex set $\overline{\Delta}(dH+rE)$, defined by
\[ \setstretch{1.5}
\left\{ \begin{varwidth}{10 in} 
$(a_1,\dots,a_4,$\\
 $b_1,\dots,b_4)\in \R^{8}$
 \end{varwidth} \hspace{1ex}\left|\hspace{1ex} 
 \begin{varwidth}{10 in} \setstretch{1.5}
$0\leq a_1\leq a_2\leq a_3 \leq a_4$, and\\
$b_j\geq 0+(j-i)(r-a_j) + a_i+\cdots +a_{j-1}$,\\
$b_j\leq (d-a_j) - (k-j)(r+a_j) + a_{j+1}+\cdots+a_k$,\\
for all $1\leq i \leq j \leq k\leq 4$
 \end{varwidth} 
 \right.
 \right\}.
 \setstretch{1}
\]

One can check by computer that $\overline{\Delta}(dH+rE)$ is nonempty exactly when $dH+rE$ is in the convex cone spanned by $-E$ and $3H+2E$, i.e. when $dH+rE$ is effective \cite{ABCH}. The effective cone has a chamber decomposition such that in each chamber the convex bodies $\overline{\Delta}(D)$ vary linearly. For example, divisors in the cone spanned by $(3H+E)$ and $H$ can be written as $x(3H+E)+yH$ for $x,y\geq0$, and one has
\[ \overline{\Delta}(x(3H+E)+yH) = x\overline{\Delta}(3H+E)+ y\overline{\Delta}(H) \]
for all $x,y\geq 0$. The complete decomposition of $\Eff((\P^2)^{[4]})$ in this way is depicted below.

\[ \begin{tikzpicture}
\draw[step=1cm,gray,very thin] (-3,0) grid (3,5);
\draw[thick,->] (0,0) -- (-3.5,0);
\draw[thick,->] (0,0) -- (0,5.5);
\draw[thick,->] (0,0) -- (1/2*5.5,5.5);
\draw[thick,->] (0,0) -- (2/6*5.5,5.5);
\draw[thick,->] (0,0) -- (3.5,5.25);
\filldraw[black] (1,0) circle (0pt) node[anchor=north]{$E$};
\filldraw[black] (2,0) circle (0pt) node[anchor=north]{$2E$};
\filldraw[black] (-1,0) circle (0pt) node[anchor=north]{$-E$};
\filldraw[black] (-2,0) circle (0pt) node[anchor=north]{$-2E$};
\filldraw[black] (0,0) circle (0pt) node[anchor=north]{$0$};
\filldraw[black] (0,1) circle (0pt) node[anchor=east]{$H$};
\filldraw[black] (0,2) circle (0pt) node[anchor=east]{$2H$};
\filldraw[black] (0,3) circle (0pt) node[anchor=east]{$3H$};
\filldraw[black] (0,4) circle (0pt) node[anchor=east]{$4H$};
\filldraw[black] (2,3) circle (2pt);
\filldraw[black] (1,2) circle (2pt);
\filldraw[black] (1,3) circle (2pt);
\filldraw[black] (0,1) circle (2pt);
\filldraw[black] (-1,0) circle (2pt);
\draw [fill=blue, fill opacity=0.2,draw opacity=0]
       (0,0) -- (3,4.5) -- (3,5) -- (-3,5) -- (-3,0) -- cycle;
\end{tikzpicture} \]

By Theorem \ref{projub}, we have $\overline{\Delta}(D)=\Delta(D)$ for all divisors $D$ in the cone spanned by $-E$ and $H$. Since $\Delta(-E)= \{\vec 0\}$, linearity in this chamber says that $\Delta(tH-sE) = t\Delta(H)$ for all $t,s\geq 0$.\\

It remains to check that $\overline{\Delta}(dH+rE) = \Delta(dH+rE)$ in the case $r>0$. Consider the ample divisor $4H+E$. By Theorem \ref{vol} on volumes of Newton-Okounkov bodies, and the fact that the volume of an ample divisor is equal to its top self intersection number, we have
\begin{align*}
     \vol_{\R^8}\left(\Delta\left(4H+E\right)\right) & = \frac{1}{8!}\vol_{(\P^2)^{[4]}} \left(4H+E\right) \\ & = \frac{1}{8!}\int_{(\P^2)^{[4]}} \left(4H+E\right)^8 = \frac{1692165}{8!}.
\end{align*}
This self intersection number was computed with the equivariant localization formula. With a computer one can also calculate the Euclidean volume of the upper bound,
\[ \vol_{\R^8}\left(\overline{\Delta}\left(4H+E\right)\right) = \frac{112811}{2688} = \frac{1692165}{8!}. \]
By Theorem \ref{projub} we have the inclusion $\Delta\left(4H+E\right) \subseteq \overline{\Delta}\left(4H+E\right)$, but these two convex bodies have the same volumes so they must be equal.\\

Surprisingly, this one calculation implies that $\overline{\Delta}(D) = \Delta(D)$ for all remaining divisors as well. Indeed, we first consider divisors of the form $(4-t)H+E$ for some real number $t\geq 0$. One way to handle these divisors is to apply Theorem 4.24 from \cite{LM} on slices of Newton-Okounkov bodies. With our usual coordinates $(a_1,\dots,a_4,b_1,\dots,b_4)$ on $\R^8$, the theorem implies that for any divisor $D\in \Pic(X^{[n]})$ and $t\geq 0$ we have
\[ \Delta\left(D-tH\right) = \Delta\left(D\right)_{a_1\geq t} - (t,0,\dots,0.) \]
In other words, the Newton-Okounkov body $\Delta\left(D-tH\right)$ is equal to the part of the Newton-Okounkov body $\Delta\left(D\right)$ with first coordinate at least $t$, shifted down by $t$ in the first coordinate. There is a subtlety in that our valuation $\nu$ is not defined using flags as in \cite{LM}, so the theorem does not strictly apply as stated. However, the first coordinate of $\nu$ is equal to the order of vanishing of the corresponding section along a divisor of class $H$, so the result still holds. One can also check this property directly in this case, at the level of polynomials and trailing terms. It follows from the defining inequalities that $\overline\Delta(D-tH) = \overline\Delta(D)_{a_1\geq t}-(t,0,\dots,0)$ as well for all $D\in \Pic((\P^2)^{[4]})$. This, with the homogeneity of Newton-Okounkov bodies, implies that $\overline{\Delta}(D)= \Delta(D)$ for all divisors $D$ in the cone spanned by $4H+E$ and $3H+2E$.\\

This leaves the divisors in the cone spanned by $4H+E$ and $H$. We have $\overline\Delta(D)=\Delta(D)$ for $D$ on the boundary rays of this cone, and $\overline\Delta(D)$ varies linearly on the cone. But Newton-Okounkov bodies are super-additive, in the sense that 
\[ \Delta(xD+yD') \supseteq x\Delta(D)+y\Delta(D') \]
for any divisors $D,D'$ and $x,y\geq 0$. We conclude that $\overline{\Delta}(D) = \Delta(D)$ for divisors in this final region as well.
\qed
\end{example}

The decomposition of $\Eff((\P^2)^{[4]})$ into chambers on which the Newton-Okounkov bodies vary linearly corresponds to a Minkowski basis for $\Delta((\P^2)^{[4]})$ in the terminology of \cite{SS}. In this case the Minkowski basis decomposition coincides with the stable base locus decomposition of $(\P^2)^{[4]}$ \cite{ABCH}, but these decompositions appear to differ for $n>4$.\\

One can show that the convex bodies $\overline{\Delta}(D)$ vary linearly on the nef cone of $(\P^2)^{[n]}$ for any $n$, so the argument given above can be applied to any of the Hilbert schemes $(\P^2)^{[n]}$: Pick any ample divisor $D\in \Pic((\P^2)^{[n]})$ (the nef divisor $D = (n-1)H+E$ would also work), and compute both the Euclidean volume $\vol_{\R^{2n}}\overline{\Delta}(D)$, and the top self-intersection number $\int_{(\P^2)^{[n]}}D^{2n}$. If these numbers agree up to the factor of $(2n)!$, then Conjecture \ref{sharp} holds for all divisors on $(\P^2)^{[n]}$ for the given $n$. \\

The top self-intersection numbers can be computed quickly, even for relatively large $n$, using the equivariant localization formula. It is much more difficult to compute the volumes of the convex bodies $\overline{\Delta}(D)$. In the case $n=4$ above, the convex body $\Delta(4H+E)$ whose volume we computed is a polytope in $\R^8$ with $186$ vertices. For $n=5$, the convex set $\overline\Delta(4H+E)$ is a polytope in $\R^{10}$ with $581$ vertices. The complexity of these polytopes makes it impractical for verify Conjecture \ref{sharp} for large $n$ in this way.\\

We have similarly checked Conjecture \ref{sharp} for nef divisors on the Hilbert schemes of small numbers of points on $\P^1\times \P^1$, and the first several Hirzebruch surfaces. One can show that when the surface $X$ is $\P^1\times\P^1$ or a Hirzebruch surface, the polytopes $\overline{\Delta}(D)$ again vary linearly for $D$ in the nef cone of $X^{[n]}$. The increased Picard rank of these surfaces, however, means that the theorem on slices of Newton-Okounkov bodies (Theorem 4.24 of \cite{LM}) fails to cover the remaining effective divisors. Our justification for asserting Conjecture \ref{sharp} for non-nef divisors as well comes from the data computed in the final section about the cones of effective divisors.

\subsection{The Cone of Effective Divisors on $X^{[n]}$}\label{effsection}

Characterizing the effective divisors on $X^{[n]}$ appears to be a subtle problem (see \cite{BC}, Section 3). Huizenga has computed the effective cones on $(\P^2)^{[n]}$ for all $n$, which depend on the slopes of stable vector bundles on $\P^2$ \cite{Hu}. For other surfaces, the effective cones are known only for small values of $n$.\\

In Proposition \ref{projsections} we identified the global sections of $\Oo(D_n+rE)\in \Pic(X^{[n]})$ with the set $ A(D_n+rE)$, consisting of the (anti-)symmetric polynomials contained in the ideal $J^r$ that satisfy a term condition determined by $D$. However, it is unclear from the definition of $A(D_n+rE)$ even when these spaces are zero, i.e. when $D_n+rE$ is effective. If one knew the Newton-Okounkov bodies exactly, then one would also know the set of effective divisors, since $\Delta(D_n+rE)$ is nonempty precisely when $D_n+rE$ is pseudo-effective. We have an upper bound for the Newton-Okounkov bodies, so we obtain a corresponding upper bound for the effective cones. 

\begin{corollary}\label{eff}
For any effective divisor $D_n+rE$ on $X^{[n]}$, the convex set $\overline{\Delta}(D_n+rE)\subseteq \R^{2n}$ is nonempty. 
\end{corollary}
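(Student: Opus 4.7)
The plan is to extract the result directly from the main inclusion established in Theorem \ref{projub}, namely $\Delta(D_n + rE) \subseteq \overline{\Delta}(D_n + rE)$. The only real content is to check that effectiveness of $D_n + rE$ forces $\Delta(D_n + rE)$ to be nonempty, after which nonemptiness of $\overline{\Delta}(D_n + rE)$ is automatic.

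First I would assume $D_n + rE$ is effective, so there is a nonzero section $s \in H^0(X^{[n]}, \Oo(D_n + rE))$. By Proposition \ref{projsections}, this section is identified with a nonzero polynomial $f \in A(D_n + rE) \subseteq \C[\mathbf{x,y}]$. Applying the trailing term valuation $\nu$ from Definition \ref{val} produces a vector $\nu(f) \in \Z^{2n}$. Directly from the definition of the Newton-Okounkov body
\[ \Delta(D_n + rE) = \overline{\conv}\left(\bigcup_{m>0} \tfrac{1}{m}\{\nu(g) \mid 0 \neq g \in A(mD_n + mrE)\}\right), \]
the vector $\nu(f)$ lies in $\Delta(D_n + rE)$ (taking $m=1$), so this convex set is nonempty.

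Invoking Theorem \ref{projub} then yields $\nu(f) \in \Delta(D_n + rE) \subseteq \overline{\Delta}(D_n + rE)$, so $\overline{\Delta}(D_n + rE) \neq \emptyset$. The one bookkeeping point is the case $r < 0$: by the conventions set at the end of Section \ref{projsection}, both $\Delta(D_n + rE)$ and $\overline{\Delta}(D_n + rE)$ are declared to equal $\Delta(D_n)$ (or $\Delta(D_n + E)$ depending on parity), and effectiveness of $D_n + rE$ implies effectiveness of the relevant $D_n$ or $D_n + E$ (via multiplication by the section defining $B$, as used in the proof of Proposition \ref{projsections}), so the same argument applies.

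There is no serious obstacle here; the proof is essentially just a restatement of Theorem \ref{projub} together with the observation that a single nonzero section produces at least one point in the Newton-Okounkov body. The real work has already been done in establishing the containment of Theorem \ref{projub}.
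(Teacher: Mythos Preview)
Your proof is correct and follows the same approach as the paper: effectiveness yields a nonzero section, hence a point in $\Delta(D_n+rE)$, and then Theorem \ref{projub} gives the containment in $\overline{\Delta}(D_n+rE)$. Your separate bookkeeping for $r<0$ is unnecessary (and slightly off: by Theorem \ref{projub} one has $\Delta(D_n+rE)=\Delta(D_n)$ for all $r\le 0$, with no parity dependence at the level of Newton--Okounkov bodies), since the main argument already covers all $r\in\Z$.
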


\begin{proof}
Since $D_n-\frac r2 B$ is effective, the Newton-Okounkov body $\Delta(D_n-\frac{r}{2}B)$ is nonempty and is contained in $\overline{\Delta}(D_n+rE)$ by Theorem \ref{projub}.
\end{proof}

This corollary can be used to show that divisors are not effective, and therefore implies an upper bound for the effective cone of $X^{[n]}$. This upper bound is best understood via the global Newton-Okounkov body, as we now explain. \\

One can define the Newton polygon of any class $D\in N^1(X)_\R$. Similarly, we extend the sets $\overline{\Delta}(D_n+rE)$ to all real classes $D_n+rE\in N^1(X^{[n]})_\R$ using the same inequalities given in Theorem \ref{projub}. Let $\overline{\Delta}(X^{[n]})\subseteq N^1(X^{[n]})_\R\times \R^{2n}$ be the set whose fiber over any real class $\xi\in N^1(X^{[n]})_\R$ is $\overline{\Delta}(\xi)$. It follows from the defining equations given in Theorem \ref{projub} and the convexity of global Newton-Okounkov body of $X$ that $\overline{\Delta}(X^{[n]})$ is a closed, convex, polyhedral cone.\\

This set $\overline{\Delta}(X^{[n]})$ is an upper bound for the global Newton-Okounkov body $\Delta(X^{[n]})$, whose fiber over a divisor is its exact Newton-Okounkov body. The global Newton-Okounkov body projects precisely onto the effective cone $\Delta(X^{[n]})\to N^1(X^{[n]})_\R$. The previous corollary can therefore be rephrased as follows.

\begin{corollary}\label{eff2}
The image of the projection $\overline{\Delta}(X^{[n]})\to N^1(X^{[n]})_\R$ contains the cone of effective divisors. \qed
\end{corollary}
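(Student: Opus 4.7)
The plan is to derive this directly from Corollary \ref{eff} using the convex cone structure of $\overline{\Delta}(X^{[n]})$ established in the paragraph just above the corollary. By definition, the effective cone in $N^1(X^{[n]})_\R$ is generated (as a convex cone) by classes of effective Cartier divisors, so any $\xi$ in the effective cone can be written as a finite nonnegative real combination $\xi = \sum_{i} a_i \xi_i$, where each $\xi_i = (D_i)_n + r_i E$ is the class of an effective integer divisor on $X^{[n]}$.

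For each such $\xi_i$, Corollary \ref{eff} provides a point $v_i \in \overline{\Delta}(\xi_i)$, which by the definition of $\overline{\Delta}(X^{[n]})$ means $(\xi_i, v_i) \in \overline{\Delta}(X^{[n]})$. Since $\overline{\Delta}(X^{[n]})$ is a convex cone in $N^1(X^{[n]})_\R \times \R^{2n}$, the nonnegative combination
\[ \sum_i a_i (\xi_i, v_i) = \Bigl(\xi,\; \sum_i a_i v_i\Bigr) \]
also lies in $\overline{\Delta}(X^{[n]})$. The first coordinate of this point is $\xi$, so $\xi$ is in the image of the projection $\overline{\Delta}(X^{[n]}) \to N^1(X^{[n]})_\R$, as desired.

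I expect no real obstacle: the substantive content is already present in Corollary \ref{eff}, and only the convex cone property of $\overline{\Delta}(X^{[n]})$ is needed to pass from integer classes to arbitrary real effective classes. If one wished to extend the conclusion to the pseudo-effective cone (the closure of the effective cone), the extra ingredient would be the closedness of $\overline{\Delta}(X^{[n]})$, which likewise follows from the explicit polyhedral description in Theorem \ref{projub}.
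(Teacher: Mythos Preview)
Your argument is correct and is essentially the same as the paper's: the paper presents Corollary~\ref{eff2} as an immediate rephrasing of Corollary~\ref{eff} (hence the \qed), and you have simply spelled out the convex-cone step needed to pass from integral effective classes to arbitrary points of the effective cone. No differences worth noting.
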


The advantage of this phrasing is that computing the linear images of these polyhedra can be reduced to linear optimization problems.\\

Conjecture \ref{sharp} would imply that the upper bound of Corollary \ref{eff2} equals the exact effective cone for Hilbert schemes of points on $\P^2$, $\P^1\times\P^1$, and Hirzebruch surfaces. We have verified that this holds for $n\leq 171$ points on $\P^2$ \textit{numerically}. For example, for the Hilbert scheme of $32$ points on $\P^2$, we set up the linear optimization problem to give a lower bound on $\mu$, where $\mu H+E$ lies on the boundary of the effective cone of $(\P^2)^{[32]}$. We numerically approximated the solution to be 6.57894736842105. The exact solution to the optimization problem is easily seen to be rational, and the numerical approximation is within $10^{-15}$ (all of the digits shown) of the exact value of $\mu$, which is 125/19.\\

The table on the final page contains upper bounds for certain slices of effective cones computed \textit{numerically} for the Hilbert schemes of points on $\P^2,\P^1\times\P^1$, and the Hirzebruch surfaces $\mathscr H_1$, and $\mathscr H_2$. These numbers were obtained using the same shortcut of numerical approximation, followed by finding an unusually close, simple rational number.\\

For the surfaces other than $\P^2$, these values do not determine the entire effective cone due to the larger Picard rank, but it is possible to use Corollary \ref{eff2} to compute the entire effective cone bounds in these cases as well. Ryan \cite{Ry} has computed the effective cones on $\P^1\times \P^1$ for $n\leq 16$, which coincide with the upper bound of Corollary \ref{eff2} in each case. For $n\geq 17$ points on $\P^1\times\P^1$, there is an upper bound on the effective cone of $(\P^1\times\P^1)^{[n]}$ obtained in \cite{BC} (Example 3.9) coming from certain moving curve classes. We have checked \textit{numerically} for all $n\leq 100$ that the effective cone bound from Corollary \ref{eff2} satisfies the inequalities obtained in \cite{BC}.\\

In many cases, the upper bound of Corollary \ref{eff2} implies new inequalities bounding the effective cones. For example, we have found that for $n=17$ points on $\P^1\times\P^1$, any effective divisor $xH_1+yH_2+zE$ satisfies $8x+5y\geq 40z$ (and symmetrically $5x+8y\geq 40z$). Here we use $H_1$ and $H_2$ to denote the images of the classes of lines $\{p\}\times \P^1$ and $\P^1\times\{p\}$ in $\Pic((\P^1\times \P^1)^{[n]}) $. More generally we have observed, but not proved, that for any $k\geq 0$ and $n=17+6k$ there appear to be conditions $5x+8y\geq (40+16k)z$ and $8x+5y\geq (40+16k)z$ restricting effective divisors $xH_1+yH_2+zE$. We have observed many other similar families of inequalities on the effective cones of $\P^1\times\P^1$, $\mathscr H_1$, and $\mathscr H_2$. It would be interesting to find families of moving curve classes corresponding to these inequalities. \\

\newpage

\begin{minipage}{0.25\columnwidth}

\centering
\vspace{-2.3cm}

\begin{tikzpicture}
    \draw (0,0)-- (1,0) -- (0,1) -- (0,0);
    \filldraw (0,0) circle (2pt);
    \filldraw (0,1) circle (2pt);
    \filldraw (1,0) circle (2pt);
\end{tikzpicture}

\vspace{1cm}

$\P^2$ \\

\vspace{1ex}

\begin{tabular}{ c c }
$n$ & $\mu$ \\\hline
2& 1\\
3& 1\\
4& 3/2\\
5& 2\\
6& 2\\
7& 12/5\\
8& 8/3\\
9& 3\\
10& 3\\
11& 10/3\\
12& 7/2\\
13& 15/4\\
14& 4\\
15& 4\\
16& 30/7\\
17& 40/9\\
18& 23/5\\
19& 24/5\\
20& 5\\
21& 5\\
22& 21/4\\
23& 43/8\\
24& 11/2\\
25& 17/3\\
26& 35/6\\
27& 6\\
28& 6\\
29& 56/9\\
30& 19/3\\
31& 84/13\\
32& 125/19\\
33& 47/7\\
34& 48/7\\
35& 7\\
36& 7\\
37& 36/5\\
38& 73/10\\
39& 37/5\\
40& 15/2\\
\end{tabular}
\end{minipage}
\begin{minipage}{0.25\columnwidth}

\centering

\vspace{-2.3cm}

\begin{tikzpicture}
    \draw (0,0) -- (1,0) -- (1,1) -- (0,1) -- (0,0);
    \filldraw (0,0) circle (2pt);
    \filldraw (0,1) circle (2pt);
    \filldraw (1,0) circle (2pt);
    \filldraw (1,1) circle (2pt);
\end{tikzpicture}

\vspace{1cm}

$\P^1\times\P^1$

\vspace{1ex}

\begin{tabular}{ c c }
$n$ & $\mu$ \\\hline
2 & 1/2 \\
3& 1\\
4& 1\\
5& 4/3\\
6& 3/2\\
7& 7/4\\
8& 2\\
9& 2\\
10& 9/4\\
11& 12/5\\
12& 5/2\\
13& 8/3\\
14& 17/6\\
15& 3\\
16& 3\\
17& 16/5\\
18& 33/10\\
19& 24/7\\
20& 7/2\\
21& 40/11\\
22& 15/4\\
23& 31/8\\
24& 4\\
25& 4\\
26& 25/6\\
27& 17/4\\
28& 13/3\\
29& 40/9\\
30& 9/2\\
31& 60/13\\
32& 47/10\\
33& 24/5\\
34& 49/10\\
35& 5\\
36& 5\\
37& 36/7\\
38& 73/14\\
39& 37/7\\
40& 59/11\\
\end{tabular}
\end{minipage}
\begin{minipage}{0.25\columnwidth}

\centering

\vspace{-2.8cm}

\begin{tikzpicture}
    \draw (0,0) -- (1,0) -- (1,2) -- (0,1) -- (0,0);
    \filldraw (0,0) circle (2pt);
    \filldraw (0,1) circle (2pt);
    \filldraw (1,0) circle (2pt);
    \filldraw (1,1) circle (2pt);
    \filldraw (1,2) circle (2pt);
\end{tikzpicture}

\vspace{0.5cm}

$\mathscr H_1$

\vspace{1ex}

\begin{tabular}{ c c }
$n$ & $\mu$ \\\hline
2& 1/2\\
3& 2/3\\
4& 1\\
5& 1\\
6& 5/4\\
7& 7/5\\
8& 8/5\\
9& 5/3\\
10& 11/6\\
11& 2\\
12& 2\\
13& 24/11\\
14& 16/7\\
15& 19/8\\
16& 5/2\\
17& 21/8\\
18& 8/3\\
19& 25/9\\
20& 26/9\\
21& 3\\
22& 3\\
23& 22/7\\
24& 45/14\\
25& 33/10\\
26& 101/30\\
27& 52/15\\
28& 39/11\\
29& 40/11\\
30& 11/3\\
31& 15/4\\
32& 23/6\\
33& 47/12\\
34& 4\\
35& 4\\
36& 70/17\\
37& 71/17\\
38& 161/38\\
39& 56/13\\
40& 157/36\\
\end{tabular}
\end{minipage}
\begin{minipage}{0.25\columnwidth}

\centering

\vspace{-3.3cm}

\begin{tikzpicture}
    \draw (0,0) -- (1,0) -- (1,3) -- (0,1) -- (0,0);
    \filldraw (0,0) circle (2pt);
    \filldraw (0,1) circle (2pt);
    \filldraw (1,0) circle (2pt);
    \filldraw (1,1) circle (2pt);
    \filldraw (1,2) circle (2pt);
    \filldraw (1,3) circle (2pt);
\end{tikzpicture}

$\mathscr H_2$\\

\vspace{1ex}

\begin{tabular}{ c c }
$n$ & $\mu$ \\\hline
2& 1/3\\
3& 2/3\\
4& 3/4\\
5& 1\\
6& 1\\
7& 6/5\\
8& 4/3\\
9& 16/11\\
10& 11/7\\
11& 12/7\\
12& 7/4\\
13& 15/8\\
14& 2\\
15& 2\\
16& 15/7\\
17& 20/9\\
18& 23/10\\
19& 12/5\\
20& 37/15\\
21& 28/11\\
22& 29/11\\
23& 30/11\\
24& 11/4\\
25& 17/6\\
26& 35/12\\
27& 3\\
28& 3\\
29& 28/9\\
30& 19/6\\
31& 42/13\\
32& 23/7\\
33& 64/19\\
34& 24/7\\
35& 73/21\\
36& 53/15\\
37& 18/5\\
38& 11/3\\
39& 56/15\\
40& 15/4\\

\end{tabular}
\end{minipage}

\vfill

Each polygon $P_D$ corresponds to a divisor $D$ on a specified toric surface $X$. For each $n\geq 2$, any effective divisor of the form $t D_n+E$ on $X^{[n]}$ has $t\geq \mu$. These $\mu$'s are conjectured to be optimal, i.e. $ \mu D_n+E$ conjecturally lies on the boundary of the effective cone of $X^{[n]}$. 

\pagebreak


\begin{thebibliography}{9}
\bibitem{ABCH} 
D. Arcara, A. Bertram, I. Coskun, J. Huizenga,
\textit{The minimal model program for the Hilbert scheme of points on P2 and Bridgeland stability}. 
Adv. Math, Vol. 235 (2013), p. 580-626.

\bibitem{BC}
A. Bertram, I. Coskun
\textit{The Birational Geometry of the Hilbert Scheme of Points on Surfaces}.
Birational Geometry, Rational Curves, and Arithmetic (2013), p. 15-55.

\bibitem{B}
S. Boucksom
\textit{Corps d'Okounkov [d'apr\`es Okounkov, Lazarsfeld-Musta\c{t}\u{a}, et Kaveh-Khovanskii]}.
S\'eminaire Bourbaki 65\`eme ann\'ee, (2012-2013), no. 1059, p. 1-41.

\bibitem{BP}
M. Brion, C. Procesi
\textit{Action d’un tore dans une vari\'et\'e projective}.
Operator algebras, Unitary Representations, Enveloping Algebras, and Invariant Theory, Progress in Mathematics Vol. 92, (1990) Birkh\"auser, p. 509–539.

\bibitem{EGL}
\textit{On the cobordism class of the Hilbert scheme of a surface}.
J. Algebraic Geom, Vol. 10, (2001) no. 1, p. 81–100.

\bibitem{Fo1}
J. Fogarty,
\textit{Algebraic families on an algebraic surface}.
Am. J. Math, Vol. 90 (1968), p. 511-521.

\bibitem{Fo2}
J. Fogarty,
\textit{Algebraic families on an algebraic surface II: the Picard scheme of the punctual Hilbert scheme}.
Am. J. Math, Vol. 95 (1973), p. 660-687.

\bibitem{Fu}
W. Fulton,
\textit{Introduction to Toric Varieties}.
Annals of Math. Studies, vol. 131, Princeton Univ. Press, Princeton, 1993.

\bibitem{Ha1}
M. Haiman,
\textit{Notes on Macdonald polynomials and the geometry of Hilbert schemes}.
(https://math.berkeley.edu/$\sim$ mhaiman/)

\bibitem{Ha2}
M. Haiman,
\textit{t, q-Catalan numbers and the Hilbert scheme}.
Discrete Math, Vol 193 (1998) no. 1, p. 201-224.

\bibitem{Hart}
R. Hartshorne
\textit{Algebraic Geometry}.
Graduate Texts in Mathematics, No. 52, Springer-Verlag (1977)

\bibitem{Hu}
J. Huizenga,
\textit{Effective divisors on the Hilbert scheme of points in the plane and interpolation for stable bundles}.
J. Algebraic Geom., Vol 25 (2016) no. 1, p. 19-75.

\bibitem{KK}
K. Kaveh, A. Khovanskii,
\textit{Newton-Okounkov bodies, semigroups of integral points, graded
algebras and intersection theory}.
Ann. of Math. (2) 176 (2012), no. 2, p. 925-978.

\bibitem{KK2}
K. Kaveh, A. Khovanskii,
\textit{Convex bodies associated to actions of reductive groups}.
AMosc. Math. J., Vol 12 (2012), no. 2, p. 369-396.

\bibitem{LM} 
R. Lazarsfeld, M. Musta\c{t}\u{a},
\textit{Convex bodies associated to linear series}. 
Annales scientifiques de l'École Normale Supérieure,  Serie 4,  Volume 42 (2009) no. 5,  p. 783-835.

\bibitem{O1}
A. Okounkov,
\textit{Brunn-Minkowski inequality for multiplicities}.
Invent. Math 125 (1996), no. 3, p. 405-411.

\bibitem{O2}
A. Okounkov,
\textit{Why would multiplicities be log-concave?}
Progr. Math. 213, (2003), p. 329-347.

\bibitem{Ry}
T. Ryan, 
\textit{The effective cone of moduli spaces of sheaves on a smooth quadric surface}.
Nagoya Mathematical Journal, 232, (2018), p. 151-215.

\bibitem{SS}
W. Sawin, D. Schmitz,
\textit{On numerical Newton-Okounkov bodies and the existence of Minkowski bases}.
arXiv:1607.03667 [math.AG]

\end{thebibliography}
\end{document}